\newdimen\AAdi%
\newbox\AAbo%
\def\AAk#1#2{\s_etbox\AAbo=\hbox{#2}\AAdi=\wd\AAbo\kern#1\AAdi{}}%
\def\AAr#1#2#3{\s_etbox\AAbo=\hbox{#2}\AAdi=\ht\AAbo\raise#1\AAdi\hbox{#3}}%
\font\tenmsb=msbm10 at 12pt \font\sevenmsb=msbm7 at 8pt
\font\fivemsb=msbm5 at 6pt
\newtheorem{theorem}{Theorem}
\newtheorem{remark}[theorem]{Remark}
\newtheorem{lemma}[theorem]{Lemma}
\numberwithin{equation}{section} \numberwithin{theorem}{section}
\renewcommand{\topmargin}{0cm}
\renewcommand{\oddsidemargin}{5mm}
\renewcommand{\evensidemargin}{5mm}
\renewcommand{\textwidth}{150mm}
\renewcommand{\textheight}{230mm}
\def\R{\mathbb R}
\def\Z{\mathbb Z}
\def\Z{\mathbb Z}
\def\na{\nabla}
\def\bn{\overline\nabla}
\def\f#1#2{\frac{#1}{#2}}
\def\a{\alpha}
\def\be{\beta}
\def\r{\Re_{I\!V}}
\def\p#1{\partial #1}
\def\de{\delta}
\def\De{\Delta}
\def\e{\eta}
\def\ep{\epsilon}
\def\G{\Gamma}
\def\la{\lambda}
\def\La{\Lambda}
\def\Om{\Omega}
\def\th{\theta}
\def\si{\sigma}
\def\Si{\Sigma}
\def\r{\rho}
\begin{document}

\title
{Liouville type theorems for minimal graphs over manifolds}

\author{Qi Ding}
\address{Shanghai Center for Mathematical Sciences, Fudan University, Shanghai 200438, China}
\email{dingqi@fudan.edu.cn}

\thanks{The author is supported by NSFC 11871156 and NSFC 11922106}

\begin{abstract}
Let $\Si$ be a complete Riemannian manifold with the volume doubling property and the uniform Neumann-Poincar$\mathrm{\acute{e}}$ inequality.
We show that any positive minimal graphic function on $\Si$ is a constant.
\end{abstract}

\maketitle

\section{Introduction}

Let $\Si$ denote a smooth complete non-compact Riemannian manifold with Levi-Civita connection $D$.
Let $\mathrm{div}_\Si$ be the divergence operator in terms of the Riemannian metric of $\Si$.
In this paper, we study the minimal hypersurface equation on $\Si$
\begin{equation}\label{u0}
\mathrm{div}_\Si\left(\f{Du}{\sqrt{1+|Du|^2}}\right)=0,
\end{equation}
which is a non-linear partial differential equation describing the minimal graph
$$M=\{(x,u(x))\in \Si\times\R|\, x\in\Si\}$$
over $\Si$. The equation \eqref{u0} is equivalent to that $u$ is harmonic on $M$, i.e.,
\begin{equation}\label{DeMu0}
\De_Mu=0,
\end{equation}
where $\De_M$ is the Laplacian on $M$.
The solution $u$ to \eqref{u0} is the height function of the minimal graph $M$ in $\Si\times\R$. Therefore we call $u$ a \emph{minimal graphic function} on $\Si$.

When $\Si$ is a Euclidean space $\R^n$, \eqref{u0} is exactly the famous minimal surface equation on $\R^n$.
In 1961, J. Moser \cite{M} derived Harnack's inequalities for uniformly elliptic equations, which imply Bernstein theorem for minimal graphs of bounded slope in all dimensions.
In 1969, Bombieri-De Giorgi-Miranda \cite{BGM} (see also \cite{GT}) showed interior gradient estimates for solutions to the minimal surface equation on $\R^n$, where the 2-dimensional case had already been obtained by Finn \cite{F}.
Using the gradient estimates, they get a Liouville type theorem in \cite{BGM} as follows.
\begin{theorem}\label{PMGFC0}
Any positive minimal graphic function on $\R^n$ is a constant.
\end{theorem}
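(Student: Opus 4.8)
The plan is to combine the interior gradient estimate of Bombieri--De Giorgi--Miranda with the positivity hypothesis. Positivity renders that estimate scale invariant, which forces $Du$ to be globally bounded; once the slope is bounded, the Bernstein-type theorem for minimal graphs of bounded slope finishes the job.

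First I would fix $y\in\R^n$ and $R>0$ and apply the interior gradient estimate to the solution $u$ of \eqref{u0} on the ball $B_R(y)$: there are constants $c_1,c_2$ depending only on $n$ such that
\begin{equation*}
\sqrt{1+|Du(y)|^2}\ \le\ c_1\exp\!\left(c_2\,\f{u(y)-\inf_{B_R(y)}u}{R}\right).
\end{equation*}
This is the only place where the sign of $u$ matters: since $u>0$ on all of $\R^n$, we have $\inf_{B_R(y)}u\ge 0$, so the numerator in the exponent is at most the fixed number $u(y)$, uniformly in $R$. Letting $R\to\infty$ with $y$ held fixed sends the exponent to zero and leaves $\sqrt{1+|Du(y)|^2}\le c_1$. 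Since $c_1=c_1(n)$ and $y\in\R^n$ is arbitrary, $\sup_{\R^n}|Du|\le\sqrt{c_1^2-1}<\infty$.

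With $Du$ bounded, \eqref{u0} is uniformly elliptic and $u$ is an entire minimal graphic function of bounded slope; by Moser's theorem \cite{M} such a $u$ must be affine. (Concretely, differentiating \eqref{u0} in the direction $e_k$ shows that $\partial_k u$ is a bounded weak solution on $\R^n$ of a divergence-form equation $\partial_i\big(A^{ij}(Du)\,\partial_j(\partial_k u)\big)=0$ whose coefficient matrix $A^{ij}$ is symmetric and, once $|Du|$ is bounded, uniformly elliptic with bounded measurable entries, so De Giorgi--Nash--Moser oscillation decay forces each $\partial_k u$ to be constant.) Finally, a non-constant affine function on $\R^n$ is unbounded below, contradicting $u>0$; hence $u$ is constant.

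The real content sits entirely in the interior gradient estimate invoked above, which I would quote from \cite{BGM} (see also \cite{GT}); everything that follows is soft. The one subtlety worth flagging is the exponential form of that estimate, and in particular that its exponent involves the \emph{one-sided} quantity $u(y)-\inf_{B_R(y)}u$ rather than the full oscillation of $u$ on $B_R(y)$ --- this is exactly what allows a lower bound on $u$, together with $R\to\infty$, to yield a uniform bound on $Du$; a mere bound on $u$ over each $B_R(y)$ that grows with $R$ would not suffice.
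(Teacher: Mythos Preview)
Your argument is correct and is exactly the classical route the paper has in mind: the paper does not supply its own proof of this statement but quotes it as the result of Bombieri--De Giorgi--Miranda, obtained ``using the gradient estimates'' from \cite{BGM}. Your write-up spells out precisely that mechanism (gradient estimate $+$ positivity $\Rightarrow$ bounded slope $\Rightarrow$ Moser $\Rightarrow$ affine $\Rightarrow$ constant), so there is nothing to correct.

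It may be worth noting, for context, that the paper's own machinery---developed for the more general Theorems~\ref{PMGFC} and~\ref{main**}---gives an independent proof of this Euclidean case as well, via a rather different route: instead of a pointwise gradient bound, one proves Sobolev and Neumann--Poincar\'e inequalities directly on the minimal graph $M\subset\Si\times\R$ for monotone functions of $u$, runs De Giorgi--Nash--Moser iteration there to obtain a Harnack inequality $\sup u\le\vartheta\inf u$ on intrinsic balls of $M$, and then lets the radius go to infinity. That approach never bounds $|Du|$ and so transfers to base manifolds where no interior gradient estimate of the \cite{BGM} type is available; your approach, by contrast, is shorter and more elementary in the Euclidean setting but genuinely relies on the $\R^n$ gradient estimate.
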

Without the 'positive' condition in Theorem \ref{PMGFC0}, it is exact Bernstein theorem (see \cite{Fl,DG,Al,Si} and the counter-example in \cite{BDG}). Specially, any  minimal graphic function on $\R^n$ is affine for $n\le7$.

As the linear analogue of \eqref{u0} or \eqref{DeMu0}, harmonic functions have been studied successfully on manifolds of nonnegative Ricci curvature.
Yau \cite{Y} showed a Liouville theorem for harmonic functions:

\emph{Every positive harmonic function on a complete manifold of nonnegative Ricci curvature is a constant.}

Compared with this, it is natural to study Liouville type theorems for the solutions to \eqref{u0} on manifolds of nonnegative Ricci curvature.
Since minimal graphs in $\Si\times\R$ are area-minimizing, any positive minimal graphic function on a Riemann surface $\Si$ of nonnegative curvature is a constant from Fischer-Colbrie and Schoen \cite{FS}.
For the general dimension $n$, Rosenberg-Schulze-Spruck \cite{RSS} generalized Theorem \ref{PMGFC0}.
Specifically, they showed that any positive minimal graphical function on an $n$-dimensional complete manifold $\Si$ is a constant provided $\Si$ has nonnegative Ricci curvature and sectional curvature uniformly bounded from below.
Besides the minimal graphs of dimension $>7$ constructed by Bombieri-De Giorgi-Giusti \cite{BDG}, for all $n\ge4$ there are non totally geodesic minimal graphs over $n$-dimensional complete manifolds of positive sectional curvature \cite{DJX1}.
In the present paper, we obtain the following Liouville type theorem.
\begin{theorem}\label{PMGFC}
Any positive minimal graphic function on a complete manifold of nonnegative Ricci curvature is a constant.
\end{theorem}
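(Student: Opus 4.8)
The plan is to reduce the statement to classical harmonic function theory on the minimal graph $M=\{(x,u(x)):x\in\Si\}\subset\Si\times\R$ itself. By \eqref{DeMu0} the height function $u$, read on $M$, is a positive harmonic function, so it suffices to show that nonnegative harmonic functions on $M$ obey a scale-invariant elliptic Harnack inequality, i.e.\ that there is a constant $C$ independent of $p$ and $\rho$ with $\sup_{B_\rho(p)}w\le C\inf_{B_\rho(p)}w$ for every $w\ge0$ harmonic on $B_{2\rho}(p)\subset M$. Granting this, apply it to $w=u-\inf_M u\ge0$ on an exhausting sequence of intrinsic balls $B_R(p)$: since $M$ is complete, $\inf_{B_R(p)}w\to\inf_M w=0$ as $R\to\infty$, while $w(p)\le\sup_{B_R(p)}w\le C\inf_{B_R(p)}w$, so $w(p)=0$ for every $p$ and $u$ is constant. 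By the De Giorgi--Nash--Moser theory (Moser \cite{M}; in the geometric form due to Grigor'yan and to Saloff-Coste), such a Harnack inequality holds as soon as $M$ enjoys the volume doubling property together with a uniform Neumann--Poincar\'e inequality, so the problem is reduced to establishing these two properties on $M$.

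To do so I would first prove a Bombieri--De Giorgi--Miranda type interior gradient estimate \cite{BGM,GT}: there are constants $c_1,c_2$ depending only on $n=\dim\Si$ such that every positive solution $u$ of \eqref{u0} on a geodesic ball $B_R(x_0)\subset\Si$ with $\mathrm{Ric}_\Si\ge0$ satisfies
\[
|Du|(x_0)\le c_1\exp\!\left(\frac{c_2\,u(x_0)}{R}\right).
\]
The mechanism is the standard one. The angle function $w=\langle\n,\partial_t\rangle>0$ of $M$ in $\Si\times\R$ is the normal component of the Killing field $\partial_t$, hence $\De_Mw=-\bigl(|A|^2+\mathrm{Ric}_\Si(\n_\Si,\n_\Si)\bigr)w\le0$; equivalently the gradient function $v=1/w=\sqrt{1+|Du|^2}$ has $\De_M\log v\ge0$. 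Feeding this, together with the Laplacian comparison theorem on $\Si$ applied to the distance function pulled back to $M$, into the maximum principle on a suitable product of $v$ with a cutoff in the distance (in the manner of Korevaar), or alternatively into a Moser iteration on $M$, yields the estimate. Letting $R\to\infty$, with $u$ entire and positive, gives $|Du|\le c_1$ on all of $\Si$. Consequently the induced metric satisfies $g_\Si\le g_M=g_\Si+du\otimes du\le(1+c_1^2)\,g_\Si$, so the projection $(M,g_M)\to(\Si,g_\Si)$ is a bi-Lipschitz diffeomorphism and $(M,g_M)$ is complete. Since $\mathrm{Ric}_\Si\ge0$ supplies volume doubling (Bishop--Gromov comparison) and a uniform Neumann--Poincar\'e inequality (Buser's inequality) on $\Si$, and both are stable under a bi-Lipschitz change of metric, $M$ inherits them, completing the reduction.

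The main obstacle is the gradient estimate, specifically the distance-function term in the maximum principle computation: controlling $\De_M$ of the pulled-back distance function amounts to bounding $\mathrm{Hess}_\Si r$, which is natural under a lower bound on the sectional curvature of $\Si$ but delicate under a lower bound on the Ricci curvature alone---this is exactly why Rosenberg--Schulze--Spruck \cite{RSS} imposed the extra hypothesis that $\Si$ has sectional curvature bounded below. The robust version of the same step is to build volume doubling and the Neumann--Poincar\'e inequality on the minimal graph directly from the area-minimizing structure and the corresponding properties of the base; this is what allows $\mathrm{Ric}_\Si\ge0$ to be replaced by volume doubling plus the uniform Neumann--Poincar\'e inequality as in the abstract, both the gradient estimate and the Harnack inequality being available at that level of generality.
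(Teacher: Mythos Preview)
Your high-level strategy---establish a scale-invariant Harnack inequality for $u$ on the graph $M$ and conclude by applying it to $u-\inf_M u$---is exactly the paper's, and your final paragraph correctly locates the obstruction in the maximum-principle route (control of $\mathrm{Hess}_\Si r$ needs a lower sectional curvature bound, which is precisely the extra hypothesis in \cite{RSS}) and points toward the cure (exploit area-minimality directly). But the concrete content of your proposal stops where the real work begins: the one mechanism you actually describe is the gradient estimate, which you yourself concede does not go through under $\mathrm{Ric}_\Si\ge0$ alone, and the ``robust version'' you invoke is left as a slogan.

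The missing idea is this. You propose to ``build volume doubling and the Neumann--Poincar\'e inequality on the minimal graph directly from the area-minimizing structure,'' but a Neumann--Poincar\'e inequality on $M$ for \emph{arbitrary} test functions is not known in this generality, and the paper does not prove one. The key observation is that the Moser iteration for the Harnack inequality of $u$ only ever feeds in functions of the form $\Phi=\phi(u)$ with $\phi$ \emph{monotone} (powers of $u$, $\log u$, $u^{-\la}$). For such $\Phi$ the superlevel set $E_t=\{\Phi>t\}\cap M$ is simply $\{u>s\}\cap M$ or $\{u<s\}\cap M$ for some $s$, so its relative boundary in $M$ lies in a single horizontal slice $\Si\times\{s\}$. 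Area-minimality then lets one compare $\mathcal{H}^n(E_t)$ with a domain in that slice, where the isoperimetric/Sobolev--Poincar\'e inequality \emph{on $\Si$} applies directly; this yields Sobolev and Neumann--Poincar\'e inequalities on $M$ for this restricted class of functions (Lemmas \ref{Sob*}, \ref{NPEt}, \ref{SobMG}, \ref{NPTh*}), which is exactly enough to run the iteration and obtain Theorem \ref{Harnack}. In short, the paper proves Harnack for $u$ specifically, not for general harmonic functions on $M$, and it is this restriction---together with the horizontal structure of the level sets of $u$---that makes the argument succeed without any gradient bound. Your proposal does not contain this step; without it, neither the gradient estimate nor a transfer of the Poincar\'e inequality to $M$ can be justified under $\mathrm{Ric}_\Si\ge0$ alone.
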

In fact, Theorem \ref{PMGFC} is a consequence of a much more general result. In order to state this, let us recall the definition of some basic analytic inequalities on complete Riemannian manifold $\Si$.
Let $B_r(p)$ denote the geodesic ball in $\Si$ of the radius $r$ and centered at $p\in\Si$.
We call that $\Si$ has \emph{the volume doubling property}, if there exists a positive constant $C_D>1$ such that for all $p\in\Si$ and $r>0$
\begin{equation}\aligned\label{VD}
\mathcal{H}^n(B_{2r}(p))\le C_D\mathcal{H}^n(B_{r}(p)),
\endaligned
\end{equation}
where $\mathcal{H}^n(\cdot)$ denotes the $n$-dimensional Hausdorff measure.
We call that $\Si$ satisfies \emph{a uniform Neumann-Poincar$\mathrm{\acute{e}}$ inequality}, if there exists a positive constant $C_N\ge1$ such that for all $p\in\Si$, $r>0$ and $f\in W^{1,1}(B_r(p))$
\begin{equation}\aligned\label{NP}
\int_{B_r(p)}|f-\bar{f}_{p,r}|\le C_N r\int_{B_r(p)}|Df|,
\endaligned
\end{equation}
where $\bar{f}_{p,r}=\f1{\mathcal{H}^n(B_r(p))}\int_{B_r(p)}f$.

If $\Si$ has nonnegative Ricci curvature, then $\Si$ automatically satisfies the volume doubling property with doubling constant $C_D=2^n$ by Bishop-Gromov volume comparison theorem. From P. Buser \cite{Bu} or Cheeger-Colding \cite{CCo3}, $\Si$ satisfies a uniform Neumann-Poincar$\mathrm{\acute{e}}$ inequality with $C_N=C_N(n)<\infty$.
Now we can state our more general result compared with Theorem \ref{PMGFC} as follows.
\begin{theorem}\label{main**}
Let $\Si$ be an $n$-dimensional complete Riemannian manifold with \eqref{VD}\eqref{NP}.
If $u$ is a positive minimal graphic function on $\Si$,
then $u$ is a constant.
\end{theorem}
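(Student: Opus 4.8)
The plan is to pass to the minimal graph $M=\{(x,u(x)):x\in\Si\}$ with its induced metric $g_M$. By \eqref{DeMu0} the height function, i.e. $u$ composed with the projection $\pi\colon M\to\Si$, is a \emph{positive harmonic function} on $M$. Suppose for the moment that $(M,g_M)$ itself satisfies the volume doubling property and a uniform Neumann-Poincar\'e inequality, with constants controlled by $n,C_D,C_N$. Then Moser iteration --- equivalently, the characterization of scale-invariant Harnack inequalities via volume doubling and Poincar\'e due to Grigor'yan and Saloff-Coste --- provides a constant $C_H=C_H(n,C_D,C_N)$ such that every positive harmonic $h$ on $M$ satisfies $\sup_{B_r(p)}h\le C_H\inf_{B_r(p)}h$ for all $p\in M$ and $r>0$. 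The Liouville conclusion then follows by the usual limiting argument: set $a=\inf_M u\ge0$; if $a$ is attained then $u\equiv a$ by the strong maximum principle; if not, $u-a>0$ is harmonic with $\inf_M(u-a)=0$, and since $\sup_{B_r(p)}(u-a)\le C_H\inf_{B_r(p)}(u-a)$ for all $r$, letting $r\to\infty$ gives $\sup_M(u-a)\le C_H\inf_M(u-a)=0$, so $u\equiv a$.

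Everything thus reduces to transporting \eqref{VD} and \eqref{NP} from $\Si$ to $M$. Here $\pi\colon M\to\Si$ is a diffeomorphism with $g_M=\pi^*g_\Si+du\otimes du\ge\pi^*g_\Si$, so $d_\Si(\pi p,\pi q)\le d_M(p,q)$ and $d\mathcal{H}^n_M=\v\,d\mathcal{H}^n_\Si$, where $\v=\sqrt{1+|Du|^2}$ is the slope function (equivalently, $\v^{-1}=\langle\nu,\partial_t\rangle$ along $M$, with $\nu$ the unit normal of $M$ and $\partial_t$ the unit field along the $\R$-factor). Since moreover $|\nabla_M u|^2=1-\v^{-2}<1$ --- so $u$ is automatically $1$-Lipschitz on $M$ --- the only obstruction to $\pi$ being a quasi-isometry, and hence to the transfer of \eqref{VD}--\eqref{NP}, is the size of $\v$. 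The cleanest way to control it is a gradient estimate: a constant $C=C(n,C_D,C_N)$ with
\[
\v(p)\le C\exp\!\Big(\frac{C\,u(p)}{R}\Big)
\]
for every $p\in\Si$, $R>0$, and every positive minimal graphic function $u$ on $B_{2R}(p)$. Applied to an entire positive solution and letting $R\to\infty$, this gives $\v\le C$ on all of $\Si$; then $\pi^*g_\Si\le g_M\le\v^2\pi^*g_\Si\le C^2\pi^*g_\Si$, so $\pi$ is bi-Lipschitz and \eqref{VD}, \eqref{NP} --- stable under such a bounded change of metric, with constants depending only on $n,C_D,C_N,C$ --- pass to $(M,g_M)$. (Equivalently, once $\v\le C$ there is no need for $M$: $u$ solves the uniformly elliptic divergence-form equation $\mathrm{div}_\Si(\v^{-1}Du)=0$ on $\Si$, with scalar coefficient $\v^{-1}\in[C^{-1},1]$, and De Giorgi-Nash-Moser theory on $\Si$ --- available from \eqref{VD} and \eqref{NP} --- yields the scale-invariant Harnack inequality for $u$ on geodesic balls of $\Si$ directly, hence Liouville.)

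The main obstacle is precisely this gradient estimate, on a manifold with \emph{no} curvature hypothesis. The natural route is a De Giorgi-Moser iteration carried out intrinsically on the minimal hypersurface $M$. Along a minimal graph in the product $\Si\times\R$, the slope function $\v$ satisfies a Simons-type identity whose right-hand side contains, besides the favorable terms $|A|^2\v$ and a positive multiple of $\v^{-1}|\nabla_M\v|^2$ (with $A$ the second fundamental form of $M$), a term built from $\mathrm{Ric}_\Si$ that is \emph{not} sign-definite under our assumptions and must be absorbed; and one must bring in that the graph $M$ is area-minimizing in $\Si\times\R$ together with the positivity $u>0$, which confines $M$ to $\Si\times(0,\infty)$, makes the totally geodesic slice $\Si\times\{0\}$ available as a barrier, and makes cylinders over geodesic balls of $\Si$ available as comparison surfaces. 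The delicate point is circularity: running the iteration on $M$ requires Sobolev- or Poincar\'e-type inequalities on intrinsic balls of $M$, which is exactly what one is trying to establish. So the bound on $\v$ must be bootstrapped --- controlled first on a fixed scale, then propagated across scales by coverings compatible with \eqref{VD} and \eqref{NP} --- with all constants kept uniform throughout. Once $\v$ is under control, the remaining steps (the bi-Lipschitz transfer of \eqref{VD}--\eqref{NP}, Moser's Harnack inequality, and the limiting argument above) are routine.
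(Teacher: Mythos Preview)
Your reduction to a Harnack inequality for the harmonic height function on $M$ and the subsequent limiting argument are correct, and this is indeed how the paper finishes. The genuine gap is exactly where you flag it: the gradient estimate $\v\le C\exp(Cu/R)$ under only \eqref{VD}--\eqref{NP}. You correctly observe that the Simons-type identity for $\v$ on $M\subset\Si\times\R$ carries a $\mathrm{Ric}_\Si$ term which, with no curvature hypothesis, cannot be absorbed; and the ``bootstrapping across scales'' you propose is not a mechanism but a hope --- you never explain what breaks the circularity. As stated, the proposal does not close.

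The paper's key idea is precisely to \emph{bypass} the gradient estimate and hence the bi-Lipschitz transfer of \eqref{VD}--\eqref{NP} to $M$. The observation is that Moser's iteration for the Harnack inequality of $u$ never needs Sobolev or Poincar\'e inequalities for arbitrary functions on $M$: it only needs them for functions of the form $\Phi=\phi(u)$ with $\phi$ monotone (powers $u^\ell$, $u^{-\la}$, and $\log u$). For such $\Phi$ the super-level sets $\{\Phi>t\}$ are of the form $M\cap\{s>t_0\}$ or $M\cap\{s<t_0\}$, i.e.\ horizontal slabs. The paper proves isoperimetric inequalities for these specific sets (Lemmas \ref{Sob*} and \ref{NPEt}) using only the area-minimizing property of $M$ in $\Si\times\R$ together with \eqref{VD}--\eqref{NP} on $\Si$ and their consequences on $\Si\times\R$: one compares $M\cap\mathfrak{D}^t_{\bar x,r}$ against pieces of $\p\mathfrak{D}^t_{\bar x,r}$ and the flat slice $\Si\times\{t\}$, and invokes the isoperimetric inequality on that slice. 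No information about $\v$ is ever required. From these restricted Sobolev and Neumann--Poincar\'e inequalities (Lemmas \ref{SobMG}, \ref{NPTh*}) the full De Giorgi--Nash--Moser argument for $u$ goes through verbatim and yields Theorem \ref{Harnack}. In short: rather than proving $(M,g_M)$ is a doubling Poincar\'e space, the paper proves just enough --- inequalities tailored to the level-set structure of $u$ --- to run the iteration, and this is what makes the argument work without any curvature assumption.
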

The conditions \eqref{VD}\eqref{NP} we introduced in the above theorem are inspired by Colding-Minicozzi \cite{CM2}, where they studied harmonic functions of polynomial growth on complete Riemannian manifolds with \eqref{VD}\eqref{NP}, which resolved Yau's conjecture.
The key ingredient in the proof of Theorem \ref{main**} is to get the Sobolev inequality and the Neumann-Poincar$\mathrm{\acute{e}}$ inequality for the positive monotonic $C^1$-functions of the minimal graphic function $u$. And this is sufficient to carry out De Giorgi-Nash-Moser iteration for the positive monotonic $C^1$-functions of $u$.

\section{Analytic inequalities on $\Si$ and $\Si\times\R$}

Let $\Si$ be an $n$-dimensional complete manifold with Riemannian metric $\si$ and the Levi-Civita connection $D$.
Suppose that $\Si$ satisfies the volume doubling property \eqref{VD} and the uniform Neumann-Poincar$\mathrm{\acute{e}}$ inequality \eqref{NP}.
From \eqref{VD}, one has
\begin{equation}\aligned\nonumber
\mathcal{H}^n(B_{R}(p))\le C_D\mathcal{H}^n\left(B_{\f R2}(p)\right)\le\cdots\le C_D^k\mathcal{H}^n\left(B_{\f R{2^k}}(p)\right)=2^{k\log_2{C_D}}\mathcal{H}^n\left(B_{\f R{2^k}}(p)\right)
\endaligned
\end{equation}
for any $R>0$ and $k\in\Z^+$.
Hence, there is a constant $\a=\log_2{C_D}>0$ such that
\begin{equation}\aligned\label{VG}
\mathcal{H}^n\left(B_{r}(p)\right)\ge \f1{C_D}\mathcal{H}^n\left(B_{R}(p)\right)\left(\f rR\right)^{\a}
\endaligned
\end{equation}
for all $r\in(0,R)$.
Without loss of generality, we assume that $C_D\ge4$, then
\begin{equation}\aligned\label{alog2CD}
\a=\log_2{C_D}\ge2.
\endaligned
\end{equation}

From \eqref{VD} and 5-lemma, there is a constant $\La_D\ge1$ depending on $C_D$ such that
\begin{equation}\aligned\nonumber
\mathcal{H}^n(B_{R}(p)\setminus B_{R-r}(p))\le \La_D \mathcal{H}^n(B_{R-r}(p)\setminus B_{R-2r}(p))
\endaligned
\end{equation}
for all $0<r<R/2$, which implies
\begin{equation}\aligned\label{BRR-rR-2r}
\mathcal{H}^n(B_{R}(p)\setminus B_{R-r}(p))\le \f{\La_D}{\La_D+1} \mathcal{H}^n(B_{R}(p)\setminus B_{R-2r}(p)).
\endaligned
\end{equation}
For all $0<r<R$, there is an integer $k\ge0$ such that $2^{-k-1}<r/R\le2^{-k}$. Then from \eqref{BRR-rR-2r}, we have
\begin{equation}\aligned\label{HnBRR-rp}
&\mathcal{H}^n(B_{R}(p)\setminus B_{R-r}(p))\le \left(\f{\La_D}{\La_D+1}\right)^k \mathcal{H}^n(B_{R}(p)\setminus B_{R-2^kr}(p))\\
\le&2^{-k\log_2(1+1/\La_D)}\mathcal{H}^n(B_{R}(p))\le\left(\f {2r}R\right)^{\log_2(1+1/\La_D)}\mathcal{H}^n(B_{R}(p)).
\endaligned
\end{equation}

From \eqref{NP}, we have
\begin{equation}\aligned\label{NPOm}
\min\left\{\mathcal{H}^n(\Om),\mathcal{H}^n(B_{r}(p)\setminus\Om)\right\}\le C_Nr\mathcal{H}^{n-1}(B_r(p)\cap\p\Om)
\endaligned
\end{equation}
immediately for any open set $\Om$ in $B_r(p)$ with rectifiable boundary.
Combining \eqref{VD} and \eqref{NP}, one can get an isoperimetric inequality on $\Si$ (see the appendix for a self-contained proof).
Namely, there exists a constant $C_S\ge1$ depending only on $C_D,C_N$ such that
for any $p\in\Si$, $r>0$, we have
\begin{equation}\aligned
\left(\mathcal{H}^n(\Om)\right)^{1-\f1\a}\le C_S\left(\mathcal{H}^n(B_{r}(p))\right)^{-\f1\a}r\mathcal{H}^{n-1}(\p\Om)
\endaligned
\end{equation}
for any open set $\Om\subset B_r(p)$ with rectifiable boundary. By a standard argument (see \cite{SY} for instance),
for $f\in W^{1,1}_{0}(B_r(p))$ we have
\begin{equation}\aligned\label{SobSi}
\left(\int_{B_r(p)}|f|^{\f{\a}{\a-1}}\right)^{\f{\a-1}\a}\le C_S\left(\mathcal{H}^n(B_r(p))\right)^{-\f1\a}r\int_{B_{r}(p)}|Df|.
\endaligned
\end{equation}
From \eqref{VD} and \eqref{NP} on $\Si$, we can get the Sobolev-Poincar$\mathrm{\acute{e}}$ inequality on $\Si$ (see Theorem 1 in \cite{HK} for instance).
Namely, up to select the constants $\a\ge2$ and $C_S\ge1$, for any $p\in\Si$, $r>0$, and $f\in W^{1,1}(B_r(p))$, we have
\begin{equation}\aligned\label{SobP}
\left(\int_{B_r(p)}|f-\bar{f}_{p,r}|^{\f{\a}{\a-1}}\right)^{\f{\a-1}\a}\le C_S\left(\mathcal{H}^n(B_r(p))\right)^{-\f1\a}r\int_{B_{r}(p)}|Df|,
\endaligned
\end{equation}
where $\bar{f}_{p,r}=\f1{\mathcal{H}^n(B_r(p))}\int_{B_r(p)}f$.
The Sobolev-Poincar$\mathrm{\acute{e}}$ inequality \eqref{SobP} implies the following isoperimetric type inequality, compared with \eqref{NPOm}.
\begin{lemma}\label{ISOPOIN}
For any open set $\Om$ in $B_r(p)$ with rectifiable boundary, we have
\begin{equation}\aligned
\mathcal{H}^{n-1}(B_r(p)\cap\p\Om)\ge
\f{\left(\mathcal{H}^n(B_r(p))\right)^{\f{1}\a}}{2^{\f1\a}C_S r}\left(\min\left\{\mathcal{H}^n(\Om),\mathcal{H}^n(B_{r}(p)\setminus\Om)\right\}\right)^{\f{\a-1}\a}.
\endaligned
\end{equation}
\end{lemma}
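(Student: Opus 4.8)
The plan is to apply the Sobolev--Poincar\'e inequality \eqref{SobP} with $f$ the characteristic function $\chi_\Om$ of $\Om$. Since $\chi_\Om\notin W^{1,1}(B_r(p))$, I first pass \eqref{SobP} to $BV$ functions: as $\p\Om$ is rectifiable, $\Om$ has finite perimeter in $B_r(p)$, so I pick $f_k\in C^\infty(B_r(p))$ with $f_k\to\chi_\Om$ in $L^1(B_r(p))$ and $\int_{B_r(p)}|Df_k|\to|D\chi_\Om|(B_r(p))$, apply \eqref{SobP} to each $f_k$, and let $k\to\infty$, using Fatou's lemma on the left-hand side and the fact that the averages of $f_k$ over $B_r(p)$ tend to $\theta:=\mathcal{H}^n(\Om)/\mathcal{H}^n(B_r(p))$. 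Because $|D\chi_\Om|(B_r(p))\le\mathcal{H}^{n-1}(B_r(p)\cap\p\Om)$, this produces
$$\left(\int_{B_r(p)}|\chi_\Om-\theta|^{\f{\a}{\a-1}}\right)^{\f{\a-1}\a}\le C_S\left(\mathcal{H}^n(B_r(p))\right)^{-\f1\a}r\,\mathcal{H}^{n-1}(B_r(p)\cap\p\Om).$$

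Next I estimate the left-hand side from below. Abbreviating $A=\mathcal{H}^n(\Om)$, $B=\mathcal{H}^n(B_r(p)\setminus\Om)$ and $V=A+B=\mathcal{H}^n(B_r(p))$ (here $\mathcal{H}^n(\p\Om)=0$ since $\mathcal{H}^{n-1}(\p\Om)<\infty$), and setting $t=A/V$ so that $\theta=t$ and $1-\theta=B/V$, one computes directly
$$\int_{B_r(p)}|\chi_\Om-\theta|^{\f{\a}{\a-1}}=A(1-t)^{\f{\a}{\a-1}}+B\,t^{\f{\a}{\a-1}}=A\,(1-t)\left(t^{\f1{\a-1}}+(1-t)^{\f1{\a-1}}\right).$$
After replacing $\Om$ by $B_r(p)\setminus\overline\Om$ if necessary --- this interchanges $A$ and $B$, leaves $\min\{A,B\}$ and $V$ unchanged, and does not increase $\mathcal{H}^{n-1}(B_r(p)\cap\p\Om)$ --- I may assume $A\le B$, i.e.\ $t\in(0,\tfrac12]$ and $A=\min\{A,B\}$. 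The crux is then the elementary one-variable inequality
$$g(t):=(1-t)\left(t^{\f1{\a-1}}+(1-t)^{\f1{\a-1}}\right)\ \ge\ 2^{-\f1{\a-1}}\qquad\text{for all }t\in(0,\tfrac12],$$
which I prove by checking that $g'$ has a single sign change on $(0,\tfrac12]$, from positive to negative, so that $g$ attains its minimum over $[0,\tfrac12]$ at an endpoint; since $g(0)=1$, $g(\tfrac12)=2^{-1/(\a-1)}$ and $\a\ge2$, that minimum is $2^{-1/(\a-1)}$. Keeping both summands here is what gives the stated constant: discarding $B\,t^{\a/(\a-1)}$ and using only $1-\theta\ge\tfrac12$ would yield the weaker constant $2$ in place of $2^{1/\a}$.

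Combining the two displays gives $\int_{B_r(p)}|\chi_\Om-\theta|^{\f{\a}{\a-1}}\ge 2^{-\f1{\a-1}}A$, so the left-hand side of the Sobolev--Poincar\'e inequality above is at least $\left(2^{-\f1{\a-1}}A\right)^{\f{\a-1}\a}=2^{-\f1\a}A^{\f{\a-1}\a}$; substituting this and rearranging yields
$$\mathcal{H}^{n-1}(B_r(p)\cap\p\Om)\ \ge\ \f{\left(\mathcal{H}^n(B_r(p))\right)^{\f1\a}}{2^{\f1\a}C_S r}\left(\min\left\{\mathcal{H}^n(\Om),\mathcal{H}^n(B_r(p)\setminus\Om)\right\}\right)^{\f{\a-1}\a},$$
which is the assertion. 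The two points that need care are the $BV$ approximation used to legitimately insert $\chi_\Om$ into \eqref{SobP} and the sharp form of the elementary inequality for $g$; everything else is routine bookkeeping with exponents.
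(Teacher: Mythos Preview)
Your proof is correct and follows essentially the same route as the paper: apply the Sobolev--Poincar\'e inequality \eqref{SobP} to (an approximation of) the indicator of $\Om$, compute the left-hand side explicitly in terms of $A=\mathcal{H}^n(\Om)$ and $B=\mathcal{H}^n(B_r(p)\setminus\Om)$, and extract the factor $(\min\{A,B\})^{(\a-1)/\a}$ via an elementary inequality. The only cosmetic differences are that the paper uses the explicit Lipschitz approximants $f_\ep(x)=\min\{d(x,\Om)/\ep,1\}\cdot\mathcal{H}^n(B_r(p))$ rather than abstract $BV$ approximation, and it verifies the identical elementary inequality via the short algebraic chain $2^{1/\a}(1+s)^{(\a-1)/\a}\ge 1+s\ge 1+s^{\a-1}$ for $s\in[0,1]$ (with $s=(\min/\max)^{1/(\a-1)}$), which is quicker than your analysis of the sign changes of $g'$.
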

\begin{proof}
Denote $\Om_*=B_{r}(p)\setminus\Om$. Let $f$ be a function of bounded variation defined by $f\equiv 0$ on $\Om$, and $f\equiv \mathcal{H}^n(B_r(p))$ on $\Om_*$.
For any $\ep>0$, let $f_\ep$ be a Lipschitz function defined on $B_r(p)$ by letting $f_\ep\equiv 0$ on $\Om$, $f_\ep\equiv t\mathcal{H}^n(B_r(p))/\ep$ on $\{x\in B_r(p)|\ d(x,\Om)=t\}$ for any $t\in(0,\ep]$ and $f_\ep\equiv \mathcal{H}^n(B_r(p))$ on $\{x\in B_r(p)|\ d(x,\Om)>\ep\}$, where $d(\cdot,\Om)$ is the distance function to $\Om$ on $\Si$.
Then $f_\ep$ satisfies
$$\lim_{\ep\rightarrow0}\int_{B_r(p)}f_\ep=\int_{B_r(p)} f=\mathcal{H}^n(B_r(p))\mathcal{H}^n(\Om_*).$$
Using \eqref{SobP} for $f_\ep$, and then letting $\ep\rightarrow0$ implies
\begin{equation}\aligned
&\left(\left(\mathcal{H}^n(\Om)\right)^{\f{\a}{\a-1}}\mathcal{H}^n(\Om_*)+\left(\mathcal{H}^n(\Om_*)\right)^{\f{\a}{\a-1}}\mathcal{H}^n(\Om)\right)^{\f{\a-1}\a}\\
\le& C_S\left(\mathcal{H}^n(B_r(p))\right)^{-\f1\a}r\mathcal{H}^{n-1}(B_{r}(p)\cap\p\Om)\mathcal{H}^n(B_r(p)).
\endaligned
\end{equation}
Without loss of generality, we assume $\mathcal{H}^n(\Om_*)\le \mathcal{H}^n(\Om)$. Then
\begin{equation}\aligned
&C_S\left(\mathcal{H}^n(B_r(p))\right)^{1-\f1\a}r\mathcal{H}^{n-1}(B_{r}(p)\cap\p\Om)\\
\ge&\left(\mathcal{H}^n(\Om_*)\right)^{\f{\a-1}\a}\mathcal{H}^n(\Om)\left(1+\left(\f{\mathcal{H}^n(\Om_*)}{\mathcal{H}^n(\Om)}\right)^{\f{1}{\a-1}}\right)^{\f{\a-1}\a}\\
\ge&\left(\mathcal{H}^n(\Om_*)\right)^{\f{\a-1}\a}\mathcal{H}^n(\Om)2^{-\f1\a}\left(1+\f{\mathcal{H}^n(\Om_*)}{\mathcal{H}^n(\Om)}\right)\\
=&2^{-\f1\a}\left(\mathcal{H}^n(\Om_*)\right)^{\f{\a-1}\a}\mathcal{H}^n(B_r(p)),
\endaligned
\end{equation}
where in the third inequality we have used $2^{\f1\a}(1+t)^{\f{\a-1}\a}\ge1+t\ge1+t^{\a-1}$ for any $t\in[0,1]$ and $\a\ge2$.
This completes the proof.
\end{proof}

Let $\Si\times\R$ be the product manifold with the flat product metric $\si+ds^2$.
For any $\bar{p}=(p,t_p)\in\Si\times\R$, let $B_r(\bar{p})$ denote the geodesic ball in $\Si\times\R$ of the radius $r$ and centered at $\bar{p}$.
Then the volume of the geodesic ball $B_r(\bar{p})$ satisfies
\begin{equation}\aligned
\mathcal{H}^{n+1}(B_r(\bar{p}))=2\int_{x\in B_r(p)}\sqrt{r^2-d(x,p)^2},
\endaligned
\end{equation}
where $d(\cdot,p)$ is the distance function on $\Si$ from $p$.
Combining \eqref{VG}, we have
\begin{equation}\aligned\label{VOLBrbp}
2r\mathcal{H}^n(B_r(p))\ge\mathcal{H}^{n+1}(B_r(\bar{p}))\ge2\int_{B_{\f r2}(p)}\f{\sqrt{3}}2r=\sqrt{3}r\mathcal{H}^n\left(B_{\f r2}(p)\right)\ge \f{\sqrt{3}r}{2^{\a}C_D} \mathcal{H}^n\left(B_r(p)\right).
\endaligned
\end{equation}
Then combining \eqref{VG} we have
\begin{equation}\aligned\label{VG*}
&\mathcal{H}^{n+1}\left(B_{R}(\bar{p})\right)\le2R\mathcal{H}^n\left(B_{R}(p)\right)\le 2RC_D\left(\f Rr\right)^{\a}\mathcal{H}^n\left(B_{r}(p)\right)\\
\le&2RC_D\left(\f Rr\right)^{\a}\f{2^{\a}C_D}{\sqrt{3}r}\mathcal{H}^{n+1}\left(B_{r}(\bar{p})\right)=\f{2^{\a+1}}{\sqrt{3}}C_D^2\left(\f Rr\right)^{\a+1}\mathcal{H}^{n+1}\left(B_{r}(\bar{p})\right)
\endaligned
\end{equation}
for all $r\in(0,R)$.

Let $\pi$ denote the projection from $\Si\times\R$ onto $\Si$.
For any $\bar{x}=(x,t_x)\in\Si\times\R$, let $C_{\bar{x},r,t}$ be a cylinder in $\Si\times\R$ defined by
\begin{equation}\aligned
C_{\bar{x},r,t}=B_r(x)\times(t_x-t,t_x+t).
\endaligned
\end{equation}
Let $C_{\bar{x},r}=C_{\bar{x},r,r}$ for convenience.
\begin{lemma}\label{CheegerC}
For any open set $V$ in $C_{\bar{p},r}$ with rectifiable boundary, there is an absolute constant $c>0$ such that
\begin{equation}\aligned\label{HnpVdeDN}
C_Nr\mathcal{H}^{n}(C_{\bar{p},r}\cap\p V)\ge
\f1c\min\left\{\mathcal{H}^{n+1}(V),\mathcal{H}^{n+1}(C_{\bar{p},r}\setminus V)\right\}.
\endaligned
\end{equation}
\end{lemma}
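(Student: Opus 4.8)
The plan is to reduce the cylinder Neumann-Poincaré inequality to the isoperimetric-type inequality on $\Si$ from Lemma~\ref{ISOPOIN}, by slicing the cylinder $C_{\bar p,r}=B_r(p)\times(t_p-r,t_p+r)$ along the $\R$-direction and along horizontal slices, and then combining the two. Concretely, write points of $C_{\bar p,r}$ as $(x,s)$ with $x\in B_r(p)$ and $s\in I:=(t_p-r,t_p+r)$, and for $s\in I$ let $V_s=\{x\in B_r(p):(x,s)\in V\}$ be the horizontal slice; for $x\in B_r(p)$ let $V^x=\{s\in I:(x,s)\in V\}$ be the vertical slice. By Fubini, $\mathcal H^{n+1}(V)=\int_I \mathcal H^n(V_s)\,ds=\int_{B_r(p)}\mathcal H^1(V^x)$. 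The coarea-type decomposition of the perimeter gives $\mathcal H^n(C_{\bar p,r}\cap\p V)\ge \int_I \mathcal H^{n-1}(B_r(p)\cap\p V_s)\,ds$ (projecting the reduced boundary to $\Si$, discarding the vertical component) and also $\mathcal H^n(C_{\bar p,r}\cap\p V)\ge \int_{B_r(p)}\#\big(\p V^x\big)$ (counting the endpoints of the vertical fibers, i.e. the "vertical" part of the perimeter). These two lower bounds capture complementary pieces of $\p V$, so in fact $2\,\mathcal H^n(C_{\bar p,r}\cap\p V)$ dominates the sum.

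The first step is the easy one-dimensional case: if a vertical slice $V^x\subset I$ has $\mathcal H^1(V^x)\in(\theta|I|,(1-\theta)|I|)$ for some fixed $\theta\in(0,\tfrac12)$ — call $x$ "balanced" — then $\#(\p V^x)\ge 1$, so $\mathcal H^n(C_{\bar p,r}\cap\p V)\ge \mathcal H^n(\{x:x\text{ balanced}\})$. Next, apply Lemma~\ref{ISOPOIN} to each horizontal slice $V_s$:
\begin{equation}\aligned\nonumber
\mathcal H^{n-1}(B_r(p)\cap\p V_s)\ge \f{(\mathcal H^n(B_r(p)))^{1/\a}}{2^{1/\a}C_S r}\Big(\min\{\mathcal H^n(V_s),\mathcal H^n(B_r(p)\setminus V_s)\}\Big)^{\f{\a-1}\a},
\endaligned
\end{equation}
and integrate over $s\in I$. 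Since $t^{(\a-1)/\a}\ge t$ for $t\in[0,1]$, after normalizing by $\mathcal H^n(B_r(p))$ one gets, using \eqref{VG*} or \eqref{VOLBrbp} to convert $r\,\mathcal H^n(B_r(p))$ into $\mathcal H^{n+1}(C_{\bar p,r})$ up to dimensional constants,
$$
r\,\mathcal H^n(C_{\bar p,r}\cap\p V)\gtrsim \int_I \min\{\mathcal H^n(V_s),\mathcal H^n(B_r(p)\setminus V_s)\}\,ds.
$$

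Now I would do the standard balancing dichotomy. Set $\mu=\mathcal H^{n+1}(V)$, $\nu=\mathcal H^{n+1}(C_{\bar p,r}\setminus V)$, and WLOG $\mu\le\nu$. If a definite fraction of the mass of $V$ lies in horizontal slices $V_s$ that are themselves "not too full," i.e. $\int_I\min\{\mathcal H^n(V_s),\mathcal H^n(B_r(p)\setminus V_s)\}\,ds\gtrsim \mu$, then the integrated Lemma~\ref{ISOPOIN} estimate already gives \eqref{HnpVdeDN}. Otherwise most of the mass of $V$ sits above a subset $A\subset B_r(p)$ of points $x$ where $\mathcal H^1(V^x)\ge(1-\theta)|I|$; a symmetric statement with $\theta$ shows most of the complement's mass sits above $A'=\{x:\mathcal H^1(V^x)\le\theta|I|\}$, and since $\mu\le\nu$ this forces $\mathcal H^n(A)\lesssim \mathcal H^n(B_r(p))$ to be bounded away from full while $\mathcal H^n(A')$ is comparable to $\mathcal H^n(B_r(p))$ — in particular the "balanced" set, or rather $\p$-type separation between $A$ and $A'$, has controlled measure and $\mathcal H^n(C_{\bar p,r}\cap\p V)\gtrsim \min\{\mathcal H^n(A),\mathcal H^n(A')\}\gtrsim r^{-1}\mu$ via the vertical-fiber bound and \eqref{VOLBrbp}. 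Tracking the constants, $c$ depends only on the absolute constants in these splittings (the choice of $\theta$), as claimed.

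The main obstacle is making the dichotomy genuinely quantitative: the vertical-fiber bound only sees whether $x$ is balanced, not the measure of $A$ versus $A'$, so to close the argument I must show that when the horizontal-slice term is \emph{small} the sets $A$ and $A'$ (where $V^x$ is nearly full, resp. nearly empty) have measures whose minimum is $\gtrsim r^{-1}\min\{\mu,\nu\}$, which uses $\mu\le\nu$ together with $\int_I\mathcal H^n(V_s)=\mu$ and $\int_I\mathcal H^n(B_r\setminus V_s)=\nu$ and a pigeonhole on $s$. The rest — the coarea perimeter decomposition, Fubini, the elementary inequality $t^{(\a-1)/\a}\ge t$, and the volume comparisons \eqref{VOLBrbp}, \eqref{VG*} — is routine.
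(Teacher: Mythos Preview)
Your slicing-and-dichotomy strategy is workable and in the right spirit, but it is organized differently from the paper and is noticeably more laborious. The paper's proof pivots on the projection $\pi(\G)$ of the interior boundary $\G=\p V\cap C_{\bar p,r}$ onto $\Si$: the crucial elementary observation is that for every $x\in B_r(p)\setminus\pi(\G)$ the vertical fiber $V^x$ is either \emph{all} of $I$ or \emph{empty}. This removes any need for a ``balanced'' threshold $\theta$ or a pigeonhole on $s$. The paper then runs a clean trichotomy: (i) if $\mathcal H^n(\pi(\G))\ge\tfrac18\mathcal H^n(B_r(p))$ the perimeter is already large; otherwise (ii) one of $\pi(V)$, $\pi(C_{\bar p,r}\setminus V)$ is small, so every horizontal slice $V_t$ has small measure and one applies \eqref{NPOm} slicewise and integrates; or (iii) both projections are large, hence every slice is balanced (since $V_t\supset\pi(V)\setminus\pi(\G)$ and $B_r(p)\setminus V_t\supset\pi(C_{\bar p,r}\setminus V)\setminus\pi(\G)$), and again \eqref{NPOm} plus integration finishes. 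Your horizontal step plays the same role as (ii)/(iii), but your vertical step and the $A$, $A'$, ``balanced'' decomposition are a more roundabout substitute for the single line ``$x\notin\pi(\G)\Rightarrow V^x\in\{\emptyset,I\}$''.

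Two smaller points. First, you invoke Lemma~\ref{ISOPOIN} on slices and then downgrade via $t^{(\a-1)/\a}\ge t$; the paper instead uses \eqref{NPOm} directly, which already gives the linear bound $\min\{\mathcal H^n(V_t),\mathcal H^n(B_r(p)\setminus V_t)\}\le C_Nr\,\mathcal H^{n-1}(B_r(p)\cap\p V_t)$. This matters for the statement: with \eqref{NPOm} the factor $C_N$ sits on the left of \eqref{HnpVdeDN} and $c$ is genuinely absolute, whereas your route through Lemma~\ref{ISOPOIN} produces a $C_S$ (hence $C_D,C_N$) dependence unless you switch to \eqref{NPOm}. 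Second, the part you flag as ``the main obstacle'' (showing $\min\{\mathcal H^n(A),\mathcal H^n(A')\}\gtrsim r^{-1}\mu$ and then converting this into perimeter) does require an additional averaging argument you only sketch; it can be completed, but the paper bypasses it entirely with the $\pi(\G)$ observation.
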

\begin{proof}
Let $\G=\p V\cap C_{\bar{p},r}$.
Let $\de$ be a positive constant with
\begin{equation}\aligned\label{piGde***}
\mathcal{H}^n(\pi(\G))=\de\mathcal{H}^n\left(B_r(p)\right).
\endaligned
\end{equation}
Suppose that $\de\le\f18$, or else
$$\mathcal{H}^n(\G)\ge\mathcal{H}^n(\pi(\G))\ge\f18\mathcal{H}^n\left(B_r(p)\right),$$
which completes the proof. If
\begin{equation}\aligned\label{piVpiG}
\min\{\mathcal{H}^{n}(\pi(V)\setminus\pi(\G)),\mathcal{H}^{n}(\pi(C_{\bar{p},r}\setminus V)\setminus\pi(\G))\}\le\f18\mathcal{H}^n\left(B_r(p)\right),
\endaligned
\end{equation}
then using \eqref{piGde***} we have
$$\min\{\mathcal{H}^{n}(\pi(V)),\mathcal{H}^{n}(\pi(C_{\bar{p},r}\setminus V))\}\le\f14\mathcal{H}^n\left(B_r(p)\right).$$
Now we assume
\begin{equation}\aligned\label{AsspiV}
\mathcal{H}^{n}(\pi(V))\le\f14\mathcal{H}^n\left(B_r(p)\right).
\endaligned
\end{equation}
Let $V_t=V\cap(\Si\times\{t\})$ for all $|t-t_p|<r$. Clearly, $\mathcal{H}^{n}(V_t)\le\mathcal{H}^{n}(\pi(V))\le\f14\mathcal{H}^n\left(B_r(p)\right)$.
From the uniform Neumann-Poincar$\mathrm{\acute{e}}$ inequality \eqref{NP}, we get
\begin{equation}\aligned\label{Vttpr}
\mathcal{H}^{n}(V_t)\le C_Nr\mathcal{H}^{n-1}(\p V\cap(B_r(p)\times\{t\})).
\endaligned
\end{equation}
Combining the co-area formula we have
\begin{equation}\aligned\label{intVttpr}
&\mathcal{H}^{n+1}(V)=\int_{t_p-r}^{t_p+r}\mathcal{H}^{n}(V_t)dt\\
\le& C_Nr\int_{t_p-r}^{t_p+r}\mathcal{H}^{n-1}(\p V\cap(B_r(p)\times\{t\}))dt\le C_Nr\mathcal{H}^{n}(\p V\cap C_{\bar{p},r}).
\endaligned
\end{equation}
Clearly, the above inequality is true provided the assumption \eqref{AsspiV} is replaced by $\mathcal{H}^{n}(\pi(C_{\bar{p},r}\setminus V))\le\f14\mathcal{H}^n\left(B_r(p)\right)$.
Hence, we always have
\begin{equation}\aligned\label{intVCV}
\min\{\mathcal{H}^{n+1}(V),\mathcal{H}^{n+1}(C_{\bar{p},r}\setminus V)\}\le C_Nr\mathcal{H}^{n}(\p V\cap C_{\bar{p},r}).
\endaligned
\end{equation}

If \eqref{piVpiG} fails, we have
\begin{equation}\aligned
\min\{\mathcal{H}^{n}(\pi(V)\setminus\pi(\G)),\mathcal{H}^{n}(\pi(C_{\bar{p},r}\setminus V)\setminus\pi(\G))\}\ge\f18\mathcal{H}^n\left(B_r(p)\right),
\endaligned
\end{equation}
which implies
\begin{equation}\aligned
\min\{\mathcal{H}^{n}(V_t),\mathcal{H}^{n}(B_r(p)\times\{t\}\setminus V_t)\}\ge\f18\mathcal{H}^n\left(B_r(p)\right)
\endaligned
\end{equation}
for all $|t-t_p|<r$. With \eqref{NPOm}, we have
\begin{equation}\aligned
\mathcal{H}^n\left(\p V\cap(B_r(p)\times\{t\})\right)\ge\f1{C_Nr}\min\{\mathcal{H}^{n}(V_t),\mathcal{H}^{n}(B_r(p)\times\{t\}\setminus V_t)\}\ge\f{\mathcal{H}^n\left(B_r(p)\right)}{8C_Nr}.
\endaligned
\end{equation}
Combining the co-area formula, we get
\begin{equation}\aligned
C_Nr\mathcal{H}^{n}(\p V\cap C_{\bar{p},r})\ge C_Nr\int_{t_p-r}^{t_p+r}\mathcal{H}^n\left(\p V\cap(B_r(p)\times\{t\})\right)dt\ge\f{r}{4}\mathcal{H}^n\left(B_r(p)\right),
\endaligned
\end{equation}
which implies \eqref{HnpVdeDN}. We complete the proof.
\end{proof}

From Lemma \ref{CheegerC}, we have a Neumann-Poincar$\mathrm{\acute{e}}$ inequality in $\Si\times\R$ as follows.
\begin{lemma}\label{CheegerC*}
\begin{equation}\aligned
\int_{B_{r}(\bar{p})}|f-\bar{f}_{\bar{p},r}|\le 2cC_N r\int_{B_{\sqrt{2}r}(\bar{p})}|\na f|
\endaligned
\end{equation}
for all function $f\in W^{1,1}_{loc}(\Si\times\R)$,
where $\bar{f}_{\bar{p},r}=\f1{\mathcal{H}^{n+1}(B_r(\bar{p}))}\int_{B_r(\bar{p})}f$.
\end{lemma}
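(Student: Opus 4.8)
The plan is to recognize Lemma~\ref{CheegerC*} as the $L^1$-Poincar\'e inequality produced by the Cheeger-type isoperimetric inequality of Lemma~\ref{CheegerC}, combined with the two elementary inclusions
\[
B_r(\bar p)\subset C_{\bar p,r}\subset B_{\sqrt2\,r}(\bar p),
\]
which hold because a point $(x,s)$ with $d(x,p)<r$ and $|s-t_p|<r$ lies at distance $\sqrt{d(x,p)^2+(s-t_p)^2}<\sqrt2\,r$ from $\bar p$, while conversely a point at distance $<r$ from $\bar p$ has horizontal and vertical displacements both $<r$. Thus it suffices to establish the cylinder Poincar\'e inequality
\[
\int_{C_{\bar p,r}}|f-\la|\le cC_Nr\int_{C_{\bar p,r}}|\na f|,
\]
where $\la$ is a median of $f$ on $C_{\bar p,r}$, i.e.\ a value with $\mathcal H^{n+1}(\{f>\la\}\cap C_{\bar p,r})\le\tfrac12\mathcal H^{n+1}(C_{\bar p,r})$ and $\mathcal H^{n+1}(\{f<\la\}\cap C_{\bar p,r})\le\tfrac12\mathcal H^{n+1}(C_{\bar p,r})$; such a $\la$ always exists.

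To prove this, by density it is enough to treat $f$ Lipschitz on $C_{\bar p,r}$ and then pass to $W^{1,1}_{loc}(\Si\times\R)$ by approximation. Decompose $f-\la=(f-\la)^+-(f-\la)^-$. For $t>0$ the superlevel set $V_t:=\{x\in C_{\bar p,r}\colon f(x)>\la+t\}$ is relatively open, satisfies $\mathcal H^{n+1}(V_t)\le\mathcal H^{n+1}(\{f>\la\}\cap C_{\bar p,r})\le\tfrac12\mathcal H^{n+1}(C_{\bar p,r})\le\mathcal H^{n+1}(C_{\bar p,r}\setminus V_t)$, and for a.e.\ $t$ has rectifiable boundary with $C_{\bar p,r}\cap\partial V_t\subset\{f=\la+t\}$ by continuity of $f$. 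Hence Lemma~\ref{CheegerC} applies and gives $\mathcal H^{n+1}(V_t)\le cC_Nr\,\mathcal H^{n}(C_{\bar p,r}\cap\partial V_t)\le cC_Nr\,\mathcal H^{n}(\{f=\la+t\}\cap C_{\bar p,r})$. Integrating over $t\in(0,\infty)$, using the layer-cake formula on the left and the coarea formula $\int_0^\infty\mathcal H^{n}(\{f=\la+t\}\cap C_{\bar p,r})\,dt=\int_{\{f>\la\}\cap C_{\bar p,r}}|\na f|$ on the right, yields $\int_{C_{\bar p,r}}(f-\la)^+\le cC_Nr\int_{\{f>\la\}\cap C_{\bar p,r}}|\na f|$. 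Applying the same argument to $\la-f$ gives the companion bound for $(f-\la)^-$, and summing proves the cylinder Poincar\'e inequality above.

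Finally, restrict the domain of integration on the left to $B_r(\bar p)\subset C_{\bar p,r}$ and enlarge it on the right to $B_{\sqrt2\,r}(\bar p)\supset C_{\bar p,r}$, so $\int_{B_r(\bar p)}|f-\la|\le cC_Nr\int_{B_{\sqrt2\,r}(\bar p)}|\na f|$. To turn the median $\la$ into the mean $\bar f_{\bar p,r}$, note $|\bar f_{\bar p,r}-\la|\le\f1{\mathcal H^{n+1}(B_r(\bar p))}\int_{B_r(\bar p)}|f-\la|$, whence by the triangle inequality $\int_{B_r(\bar p)}|f-\bar f_{\bar p,r}|\le2\int_{B_r(\bar p)}|f-\la|$, which produces exactly the constant $2cC_N$ in the statement. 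The one point that genuinely requires care is the coarea step: one must know that for a.e.\ $t$ the superlevel set $V_t$ is admissible in Lemma~\ref{CheegerC} (relatively open, with $\mathcal H^{n}$-rectifiable boundary of finite measure) and that $C_{\bar p,r}\cap\partial V_t$ is $\mathcal H^{n}$-controlled by the level set $\{f=\la+t\}$; for Lipschitz $f$ this is the Lipschitz coarea formula together with Sard's theorem after smoothing, and as an alternative one may run the argument with relative perimeter and the $BV$ coarea formula, extending Lemma~\ref{CheegerC} to sets of finite perimeter by the usual lower-semicontinuity and approximation argument. Everything else is routine.
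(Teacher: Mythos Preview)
Your proof is correct and follows essentially the same route as the paper: both combine the inclusions $B_r(\bar p)\subset C_{\bar p,r}\subset B_{\sqrt2\,r}(\bar p)$ with Lemma~\ref{CheegerC} applied to superlevel sets, integrated via layer-cake and coarea. The only cosmetic difference is that the paper centers at the ball-mean $\bar f_{\bar p,r}$ from the start and obtains the factor $2$ from the identity $\int_{B_r(\bar p)}|f-\bar f_{\bar p,r}|=2\int_{\{f>\bar f_{\bar p,r}\}\cap B_r(\bar p)}(f-\bar f_{\bar p,r})$, whereas you center at a cylinder-median and pick up the $2$ when passing from median to mean.
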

\begin{proof}
Let
$$V^+_{s,t}=\{x\in B_{s}(\bar{p})|\, f(x)>\bar{f}_{\bar{p},r}+t\},$$
and
$$V^-_{s,t}=\{x\in B_{s}(\bar{p})|\, f(x)<\bar{f}_{\bar{p},r}+t\}$$
for all $s\in(0,2r]$ and $t\in\R$.
Without loss of generality, we assume $\mathcal{H}^{n+1}(V^+_{r,0})\le\mathcal{H}^{n+1}(V^-_{r,0})$. Then
\begin{equation}\aligned
\mathcal{H}^{n+1}(V^+_{r,t})\le\f12\mathcal{H}^{n+1}(B_{r}(\bar{p}))
\endaligned
\end{equation}
for any $t\ge0$.
From Lemma \ref{CheegerC}, there holds
\begin{equation}\aligned
\mathcal{H}^{n}\left(\p V^+_{\sqrt{2}r,t}\cap B_{\sqrt{2}r}(\bar{p})\right)\ge\f1{cC_Nr}\mathcal{H}^{n+1}(V^+_{r,t}).
\endaligned
\end{equation}
From co-area formula, we have
\begin{equation}\aligned
\int_{V^+_{r,0}}\left(f-\bar{f}_{\bar{p},r}\right)=&\int_0^\infty\mathcal{H}^{n+1}(V^+_{r,t})dt\le cC_N r\int_0^\infty\mathcal{H}^{n}\left(\p V^+_{\sqrt{2}r,t}\cap B_{\sqrt{2}r}(\bar{p})\right)dt\\
\le& cC_N r\int_{B_{\sqrt{2}r}(\bar{p})}|\na f|.
\endaligned
\end{equation}
Hence
\begin{equation}\aligned
&\int_{B_{r}(\bar{p})}\left|f-\bar{f}_{\bar{p},r}\right|=\int_{V^+_{r,0}}\left(f-\bar{f}_{\bar{p},r}\right)-\int_{V^-_{r,0}}\left(f-\bar{f}_{\bar{p},r}\right)\\
=&2\int_{V^+_{r,0}}\left(f-\bar{f}_{\bar{p},r}\right)\le 2cC_N r\int_{B_{\sqrt{2}r}(\bar{p})}|\na f|.
\endaligned
\end{equation}
This completes the proof.
\end{proof}

Combining \eqref{VG*}, Lemma \ref{CheegerC*} and Theorem 1 in \cite{HK}, up to select the constants $\a\ge2$ and $C_S\ge1$, for any $\bar{p}\in\Si\times\R$, $r>0$, and $f\in W^{1,1}(B_r(\bar{p}))$, we have
\begin{equation}\aligned
\left(\int_{B_r(\bar{p})}|f-\bar{f}_{\bar{p},r}|^{\f{\a+1}{\a}}\right)^{\f{\a}{\a+1}}\le C_S\left(\mathcal{H}^{n+1}(B_r(\bar{p}))\right)^{-\f1{\a+1}}r\int_{B_{r}(\bar{p})}|Df|,
\endaligned
\end{equation}
where $\bar{f}_{\bar{p},r}=\f1{\mathcal{H}^{n+1}(B_r(\bar{p}))}\int_{B_r(\bar{p})}f$.
From the argument of Lemma \ref{ISOPOIN}, we have
\begin{equation}\aligned\label{SP11}
\mathcal{H}^{n}(B_r(\bar{p})\cap\p V)\ge
\f{\left(\mathcal{H}^{n+1}(B_r(\bar{p}))\right)^{\f{1}{\a+1}}}{2^{\f1{\a+1}}C_S r}\left(\min\left\{\mathcal{H}^{n+1}(V),\mathcal{H}^{n+1}(B_{r}(\bar{p})\setminus V)\right\}\right)^{\f{\a}{\a+1}}
\endaligned
\end{equation}
for any open set $V$ in $B_r(\bar{p})$ with rectifiable boundary. In particular,
\begin{equation}\aligned\label{SP10}
&\mathcal{H}^{n}(B_r(\bar{p})\cap\p V)\\
\ge&
\f{\left(\mathcal{H}^{n+1}(B_r(\bar{p}))\right)^{\f{1}{\a+1}}}{2^{\f1{\a+1}}C_S r}\min\left\{\mathcal{H}^{n+1}(V),\mathcal{H}^{n+1}(B_{r}(\bar{p})\setminus V)\right\}\left(\f{\mathcal{H}^{n+1}(B_r(\bar{p}))}{2}\right)^{-\f{1}{\a+1}}\\
=&\f{1}{C_S r}\min\left\{\mathcal{H}^{n+1}(V),\mathcal{H}^{n+1}(B_{r}(\bar{p})\setminus V)\right\}.
\endaligned
\end{equation}
From the argument in the appendix, there holds the isoperimetric inequality and the Sobolev inequality on $\Si\times\R$ from the doubling property \eqref{VOLBrbp} and \eqref{SP10}.
Namely, up to choose the constant $C_S\ge1$ depending only on $C_D,C_N$, we have
\begin{equation}\aligned\label{Soba}
\left(\mathcal{H}^{n+1}(V)\right)^{\f{\a}{\a+1}}\le C_S\left(\mathcal{H}^{n+1}(B_r(\bar{p}))\right)^{-\f1{\a+1}}r\mathcal{H}^{n+1}(\p V)
\endaligned\end{equation}
for any open set $V$ in $B_r(\bar{p})$ with rectifiable boundary.,

\section{Sobolev and Neumann-Poincar$\mathrm{\acute{E}}$ inequalities on minimal graphs}

Let $M$ be a minimal graph over the geodesic ball $B_R(p)\subset\Si$ with the graphic function $u$, i.e., $u$ satisfies \eqref{u0} on $B_R(p)$. Denote $\bar{p}=(p,u(p))\in M$.
Similar to the Euclidean case, $M$ is an area-minimizing hypersurface in $B_R(p)\times\R$ (see \cite{R} or Lemma 2.1 in \cite{DJX2}).
Namely, for any rectifiable hypersurface $S$ in $B_R(p)\times\R$ with boundary $\p S=\p M$, we have
\begin{equation}\aligned
\mathcal{H}^n(M)\le \mathcal{H}^n(S).
\endaligned
\end{equation}
For the fixed $R>0$, let
\begin{equation}\aligned\label{Om+-}
\Om_\pm=\{(x,t)\in B_R(p)\times\R|\, t>(<)u(x)\}.
\endaligned
\end{equation}
\begin{lemma}\label{lbdMG*}
There is a constant $\be\in(0,\f12]$ depending only on $C_D,C_N$ such that for any $(n+1)$-dimensional ball $B_r(z)\subset B_R(p)\times\R$ with $z\in M$ we have
$$\mathcal{H}^{n+1}(\Om_+\cap B_r(z))\ge \be\mathcal{H}^{n+1}(B_r(z)).$$
\end{lemma}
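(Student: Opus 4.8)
The statement is a two-sided density (volume) bound for the region $\Om_+$ above the minimal graph, sampled over balls centered at points $z$ of $M$ itself. The natural strategy is to exploit that $M=\p\Om_+$ is area-minimizing together with the relative isoperimetric inequality \eqref{SP11} on $\Si\times\R$. Fix $B_r(z)\subset B_R(p)\times\R$ with $z\in M$, and write $\mu=\min\{\mathcal{H}^{n+1}(\Om_+\cap B_r(z)),\mathcal{H}^{n+1}(\Om_-\cap B_r(z))\}$, so that proving $\mathcal{H}^{n+1}(\Om_+\cap B_r(z))\ge\be\mathcal{H}^{n+1}(B_r(z))$ reduces (by the same reasoning applied with $+$ and $-$ interchanged, using that $\Om_+\cup\Om_-$ fills $B_r(z)$ up to the measure-zero set $M$) to showing $\mu\ge\be\mathcal{H}^{n+1}(B_r(z))$.

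First I would set up the comparison. For a.e. $s\in(0,r)$ the slice $M\cap B_s(z)$ is a rectifiable hypersurface-with-boundary whose boundary lies in $\p B_s(z)$, and $S_s:=(M\setminus B_s(z))\cup(\p B_s(z)\cap\Om_+)$ — i.e. replace the part of $M$ inside $B_s(z)$ by the portion of the sphere $\p B_s(z)$ bounding $\Om_+\cap B_s(z)$ together with that inside piece of $M$ — is an admissible competitor for the area-minimizing property with $\p S_s=\p M$. Hence
\begin{equation}\aligned\nonumber
\mathcal{H}^n(M\cap B_s(z))\le\mathcal{H}^{n}(\p B_s(z)\cap\Om_+).
\endaligned
\end{equation}
By the same token with $\Om_-$ in place of $\Om_+$, $\mathcal{H}^n(M\cap B_s(z))\le\mathcal{H}^{n}(\p B_s(z)\cap\Om_-)$, so in fact $2\mathcal{H}^n(M\cap B_s(z))\le\mathcal{H}^{n-1}(\p B_s(z)\cap M)\cdot 0+\mathcal{H}^{n}(\p B_s(z))$; more usefully, $\mathcal{H}^n(M\cap B_s(z))\le\min_{\pm}\mathcal{H}^{n}(\p B_s(z)\cap\Om_\pm)$.

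Next, apply \eqref{SP11} to the open set $V=\Om_+\cap B_s(z)$ inside $B_s(z)$. Its boundary inside $B_s(z)$ is exactly $M\cap B_s(z)$ (up to null sets, since $z\in M$ and the remaining boundary piece sits on $\p B_s(z)$), so
\begin{equation}\aligned\nonumber
\mathcal{H}^n(M\cap B_s(z))\ge\f{(\mathcal{H}^{n+1}(B_s(z)))^{\f1{\a+1}}}{2^{\f1{\a+1}}C_S s}\bigl(\min\{\mathcal{H}^{n+1}(\Om_+\cap B_s(z)),\mathcal{H}^{n+1}(\Om_-\cap B_s(z))\}\bigr)^{\f{\a}{\a+1}}.
\endaligned
\end{equation}
Denote $V(s)=\min\{\mathcal{H}^{n+1}(\Om_+\cap B_s(z)),\mathcal{H}^{n+1}(\Om_-\cap B_s(z))\}$. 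Combining the minimality bound with the co-area identity $\tfrac{d}{ds}\mathcal{H}^{n+1}(\Om_\pm\cap B_s(z))=\mathcal{H}^{n}(\p B_s(z)\cap\Om_\pm)$ for a.e. $s$, one gets $V(s)^{\a/(\a+1)}\le C s\, V'(s)$ with $C$ controlled by $C_S$ and the volume-ratio bound \eqref{VG*} (which lets me replace $(\mathcal{H}^{n+1}(B_s(z)))^{1/(\a+1)}$ by a comparable multiple of $s$, since $z\in M\subset B_R(p)\times\R$ and $\mathcal{H}^{n+1}(B_s(z))\ge c\, s^{n+1}$ — actually I only need $\mathcal{H}^{n+1}(B_s(z))\gtrsim s\,\mathcal{H}^n(B_s(\pi(z)))$, combined with the lower volume growth to compare $B_s$ and $B_r$). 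This is a differential inequality of the classic "density drop" type: integrating $\tfrac{d}{ds}(V(s)^{1/(\a+1)})\ge c/s$ from a small scale up to $r$, and using $V(s)>0$ for $s>0$ (true because $z\in M=\p\Om_+$, so both sides have positive measure in every ball around $z$), yields $V(r)^{1/(\a+1)}\ge c' \bigl(\mathcal{H}^{n+1}(B_r(z))\bigr)^{1/(\a+1)}$ after feeding the volume-ratio comparison back in; rearranging gives $V(r)\ge\be\,\mathcal{H}^{n+1}(B_r(z))$ with $\be=\be(C_D,C_N)\in(0,\tfrac12]$.

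\textbf{Main obstacle.} The delicate point is the monotonicity/ODE step: turning the combination of the area-comparison inequality and the relative isoperimetric inequality \eqref{SP11} into a clean differential inequality for $V(s)$, and in particular handling the volume factor $(\mathcal{H}^{n+1}(B_s(z)))^{1/(\a+1)}$ uniformly in $s$ and the base point $z$. One must use \eqref{VOLBrbp}–\eqref{VG*} to see that $\mathcal{H}^{n+1}(B_s(z))$ behaves like $s\,\mathcal{H}^n(B_s(\pi(z)))$ and grows at least like $s^{\a+1}$ relative to $\mathcal{H}^{n+1}(B_r(z))$, so that the integrated inequality closes up with constants depending only on $C_D, C_N$ and not on $r, z, R$ or the graph $u$. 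A secondary technical care is justifying the choice of competitor $S_s$ for a.e. $s$ and that $\p(\Om_+\cap B_s(z))\cap B_s(z)=M\cap B_s(z)$ up to $\mathcal{H}^n$-null sets; this is standard BV/slicing and I would dispatch it briefly.
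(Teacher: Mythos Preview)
Your approach is correct and essentially the same as the paper's: combine area-minimality with an isoperimetric inequality to obtain a differential inequality for $s\mapsto\big(\mathcal{H}^{n+1}(\Om_+\cap B_s(z))\big)^{1/(\a+1)}$, then integrate. The paper uses \eqref{Soba} applied directly to $\Om_+\cap B_t(z)$ (so no $\min$ enters and one works with $f_+$ itself rather than $V$), and it dispatches your ``main obstacle'' simply by integrating only over $[r/2,r]$, where $\mathcal{H}^{n+1}(B_t(z))\ge\mathcal{H}^{n+1}(B_{r/2}(z))$ by monotonicity, so that $\int_{r/2}^r\tfrac{1}{t}\,dt=\log 2$ closes the estimate without invoking \eqref{VG*}; note also that your stated form ``$\ge c/s$'' is a slip, since the right-hand side is $(\mathcal{H}^{n+1}(B_t(z)))^{1/(\a+1)}/\big(2(\a+1)C_S\,t\big)$.
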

\begin{proof}
Let $\Om_{s,z}=\Om_+\cap\p B_s(z)$ for any $0<s<r$. Then
$$\p \Om_{s,z}=\p \Om_+\cap\p B_s(z)=\p(B_s(z)\cap\p \Om_+).$$
Since $\p \Om_+\cap B_s(z)$ is area-minimizing in $B_R(p)\times\R$, then
\begin{equation}\aligned
\mathcal{H}^{n}(\Om_{s,z})\ge\mathcal{H}^{n}(B_s(z)\cap\p \Om_+).
\endaligned
\end{equation}
By the isoperimetric inequality \eqref{Soba}, for any $0<t<r$ we have
\begin{equation}\aligned
&\left(\mathcal{H}^{n+1}(\Om_+\cap B_t(z))\right)^{\f{\a}{\a+1}}\le C_S\left(\mathcal{H}^{n+1}(B_t(z))\right)^{-\f1{\a+1}}t\mathcal{H}^{n}(\p(\Om_+\cap B_t(z))).
\endaligned\end{equation}
Since
$$\mathcal{H}^{n}(\p(\Om_+\cap B_t(z)))=\mathcal{H}^{n}(\Om_{t,z})+\mathcal{H}^{n}(B_t(z)\cap \p \Om_+)\le2\mathcal{H}^{n}(\Om_{t,z}),$$
then
\begin{equation}
\left(\mathcal{H}^{n+1}(B_t(z))\right)^{\f1{\a+1}}\left(\mathcal{H}^{n+1}(\Om_+\cap B_t(z))\right)^{\f{\a}{\a+1}}\le 2C_St\mathcal{H}^{n}(\Om_{t,z})
\end{equation}
for any $t\in(0,r)$, which means
\begin{equation}\label{ODEEsx}
\left(\mathcal{H}^{n+1}(B_t(z))\right)^{\f1{\a+1}}\left(\int_0^t\mathcal{H}^{n}(\Om_{s,z})ds\right)^{\f{\a}{\a+1}}\le 2C_St\mathcal{H}^{n}(\Om_{t,z}).
\end{equation}
As $z\in M$, one has $\int_0^t\mathcal{H}^{n}(\Om_{s,z})ds>0$ for each $t>0$. From \eqref{ODEEsx}, we get
\begin{equation}
\f{\p}{\p t}\left(\int_0^t\mathcal{H}^{n}(\Om_{s,z})ds\right)^{\f{1}{\a+1}}\ge\f{1}{2(\a+1) C_St}\left(\mathcal{H}^{n+1}(B_t(z))\right)^{\f1{\a+1}}
\end{equation}
on $(0,r)$,
which gives
\begin{equation}\nonumber
\aligned
\left(\int_0^r\mathcal{H}^{n}(\Om_{s,z})ds\right)^{\f{1}{\a+1}}\ge&\f{1}{2(\a+1) C_S}\left(\mathcal{H}^{n+1}(B_{\f r2}(z))\right)^{\f1{\a+1}}\int_{\f r2}^r\f1t dt\\
=&\f{\log 2}{2(\a+1) C_S}\left(\mathcal{H}^{n+1}(B_{\f r2}(z))\right)^{\f1{\a+1}}.
\endaligned
\end{equation}
In particular,
\begin{equation}
\mathcal{H}^{n+1}(\Om_+\cap B_r(z))=\int_0^r\mathcal{H}^{n}(\Om_{s,z})ds\ge\left(\f{\log2}{2(\a+1) C_S}\right)^{\a+1}\mathcal{H}^{n+1}(B_{\f r2}(z)),
\end{equation}
and this completes the proof.
\end{proof}

From the above lemma, we also have
$$\mathcal{H}^{n+1}(\Om_-\cap B_r(z))\ge \be\mathcal{H}^{n+1}(B_r(z))$$
for any $B_r(z)\subset B_R(p)\times\R$ with $z\in M$.
Combining \eqref{SP10}, we can give a lower bound of the volume of minimal graphs as follows.
\begin{equation}\aligned\label{lbdMG}
\mathcal{H}^{n}(M\cap B_r(z))\ge\f{\be}{C_S r}\mathcal{H}^{n+1}(B_r(z)).
\endaligned
\end{equation}
Recall $C_{\bar{x},r}=B_r(x)\times(t_x-r,t_x+r)$ for $\bar{x}=(x,t_x)\in M$, and $\pi$ denotes the projection from $\Si\times\R$ onto $\Si$.
From Lemma \ref{lbdMG*}, we have
\begin{equation}\aligned\label{ldbOm+Cbxr}
\mathcal{H}^{n+1}(\Om_+\cap C_{z,r})\ge\mathcal{H}^{n+1}(\Om_+\cap B_r(z))\ge \be\mathcal{H}^{n+1}(B_r(z)).
\endaligned
\end{equation}
Since
$$\Om_+\cap (B_r(\pi(z))\times\{t_1\})\subset \Om_+\cap (B_r(\pi(z))\times\{t_2\})$$
for all $t_1<t_2$, then from co-area formula and \eqref{ldbOm+Cbxr}, we have
\begin{equation}\aligned\label{lbdcylinderM}
&\mathcal{H}^{n}\left(\Om_+\cap (B_r(\pi(z))\times\{t\})\right)\ge\mathcal{H}^{n}\left(\Om_+\cap (B_r(\pi(z))\times\{r\})\right)\\
\ge&\f1{2r}\mathcal{H}^{n+1}(\Om_+\cap C_{z,r})\ge \f{\be}{2r}\mathcal{H}^{n+1}(B_r(z))
\endaligned
\end{equation}
for any $t\ge r$.

Now let us define several open sets in $\Si\times\R$ as follows.
For any $r>0$, $t\in\R$ and $\bar{x}=(x,t_x)\in\Si\times\R$, we define
$$\mathfrak{D}_{\bar{x},r}=\{(y,s)\in\Si\times\R|\, d(x,y)+|s-t_x|<r\},$$
$$\mathfrak{D}_{\bar{x},r}^t=\{(y,s)\in\Si\times\R|\, d(x,y)+|s-t_x|<r,\, s>t+t_x\},$$
and
$$\mathfrak{D}_{\bar{x},r}^\pm=\{(y,s)\in\Si\times\R|\, d(x,y)+|s-t_x|<r,\, s>(<)t_x\}.$$
Then
$$B_{r}(\bar{x})\subset\mathfrak{D}_{\bar{x},\sqrt{2}r}\subset B_{\sqrt{2}r}(\bar{x}).$$
For any $n$-rectifiable set $\Om\subset\p\mathfrak{D}_{\bar{x},r}^+\setminus (B_r(x)\times\{t_x\})$, one has
\begin{equation}\aligned
\mathcal{H}^n(\Om)=\sqrt{2}\mathcal{H}^n(\pi(\Om)).
\endaligned
\end{equation}

From now on, we assume that the graphic function $u$ of the minimal graph $M$ is not a constant in this section.
Since $M$ is area-minimizing in $B_r(p)\times\R$, then
\begin{equation}\aligned
\mathcal{H}^n(M\cap\mathfrak{D}_{\bar{x},r})\le\f12\mathcal{H}^n(\p \mathfrak{D}_{\bar{x},r})=\sqrt{2}\mathcal{H}^n(B_r(x)).
\endaligned
\end{equation}
Combining \eqref{VOLBrbp} and \eqref{lbdMG}, there is a constant $\be_*\in(0,\be]$ depending only on $C_D,C_N$ such that
\begin{equation}\aligned\label{UplowM}
\be_*\mathcal{H}^{n}(B_r(x))\le\mathcal{H}^n\left(M\cap\mathfrak{D}_{\bar{x},r}\right)\le \sqrt{2}\mathcal{H}^{n}(B_r(x))
\endaligned
\end{equation}
for all $B_r(x)\subset B_R(p)$.
From \eqref{VG}\eqref{VOLBrbp} and Lemma \ref{lbdMG*}, there is a constant $\hat{\be}>0$ depending only on $C_D,C_N$ such that
\begin{equation}\aligned\label{Om+mfDxr}
\mathcal{H}^{n+1}(\Om^+\cap \mathfrak{D}_{\bar{x},r})\ge\hat{\be}r\mathcal{H}^n(B_r(x)).
\endaligned
\end{equation}

Let us prove an isoperimetric type inequality on $M$.
\begin{lemma}\label{Sob*}
There is a constant $\th$ depending only on $C_D,C_N$ such that for any $\bar{x}=(x,u(x))$ with $B_{2r}(x)\subset B_{R}(p)$ and $\tau\in(0,r)$, we have
\begin{equation}\aligned\label{Etlessth}
\left(\mathcal{H}^n(M\cap\mathfrak{D}_{\bar{x},r})\right)^{\f1\a}\left(\mathcal{H}^n(E_t\cap\mathfrak{D}_{\bar{x},r})\right)^{\f{\a-1}\a}\le \th\left(r\mathcal{H}^{n-1}(\p E_t\cap \mathfrak{D}_{\bar{x},r+\tau})+\f r{\tau}\mathcal{H}^n(E_t\cap\mathfrak{D}_{\bar{x},r}))\right),
\endaligned
\end{equation}
where $E_t=M\cap \mathfrak{D}_{\bar{x},2r}^t$ or $E_t=M\cap\left(\mathfrak{D}_{\bar{x},2r}\setminus \mathfrak{D}_{\bar{x},2r}^t\right)$ for any $t\in(-r,r)$.
\end{lemma}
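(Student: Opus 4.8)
The plan is to prove the isoperimetric-type inequality \eqref{Etlessth} by combining the Sobolev inequality \eqref{Soba} on $\Si\times\R$ with the area-minimizing property of $M$, following the same philosophy as the proof of Lemma \ref{lbdMG*} but now localized to the regions $\mathfrak{D}_{\bar{x},r}^t$. The central idea is that $E_t$ is a piece of the minimal graph $M$ sitting inside $\mathfrak{D}_{\bar{x},2r}^t$, and its boundary relative to $M$ consists of two parts: the ``geometric'' boundary $\p E_t\cap\mathfrak{D}_{\bar{x},r+\tau}$ (along the slices $\{s=t+t_x\}$ or along $\p\mathfrak{D}_{\bar{x},2r}$), and a cut-off error term controlled by $\mathcal{H}^n(E_t\cap\mathfrak{D}_{\bar{x},r})$ weighted by $r/\tau$. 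I would introduce a Lipschitz cut-off function $\varphi$ on $\Si\times\R$ with $\varphi\equiv 1$ on $\mathfrak{D}_{\bar{x},r}$, $\varphi\equiv 0$ outside $\mathfrak{D}_{\bar{x},r+\tau}$, and $|\na\varphi|\le 1/\tau$, and consider the competitor region obtained by pushing $E_t$ off $M$.

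First I would set up the comparison surface. For the set $E_t\subset M$, consider $W = \Om_+\cap\mathfrak{D}_{\bar{x},2r}^t$ (in the case $E_t=M\cap\mathfrak{D}_{\bar{x},2r}^t$; the other case is symmetric using $\Om_-$). The topological boundary of $W$ within $\mathfrak{D}_{\bar{x},2r}^t$ decomposes as $E_t$ together with portions lying in the slice $\{s=t+t_x\}$ and in $\p\mathfrak{D}_{\bar{x},2r}^t$. Because $M=\p\Om_+$ is area-minimizing, replacing $E_t$ inside a ball by any competitor cannot decrease area; quantitatively, for each radius $s$ one gets an inequality comparing $\mathcal{H}^n(E_t\cap B_s)$ to $\mathcal{H}^n(\p B_s\cap W)$, exactly as in \eqref{ODEEsx}. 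Then I would apply the Sobolev inequality \eqref{Soba} on $\Si\times\R$ to the set $W\cap\mathfrak{D}_{\bar{x},r}^t$ (or more precisely to a smooth exhaustion of it), obtaining
\begin{equation}\aligned\nonumber
\left(\mathcal{H}^{n+1}(W\cap\mathfrak{D}_{\bar{x},r}^t)\right)^{\f{\a}{\a+1}}\le C_S\left(\mathcal{H}^{n+1}(\mathfrak{D}_{\bar{x},Cr})\right)^{-\f1{\a+1}}r\,\mathcal{H}^{n}(\p(W\cap\mathfrak{D}_{\bar{x},r}^t)).
\endaligned
\end{equation}
Using \eqref{VOLBrbp} to convert $\mathcal{H}^{n+1}(B_r(\bar{x}))$ into $r\,\mathcal{H}^n(B_r(x))$, and then \eqref{UplowM} to replace $\mathcal{H}^n(B_r(x))$ by $\mathcal{H}^n(M\cap\mathfrak{D}_{\bar{x},r})$ up to the constants $\be_*,\sqrt2$, the volume factor turns into the $\left(\mathcal{H}^n(M\cap\mathfrak{D}_{\bar{x},r})\right)^{1/\a}$ appearing on the left of \eqref{Etlessth}. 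The key geometric input that converts the $(n+1)$-dimensional volume $\mathcal{H}^{n+1}(W\cap\mathfrak{D}_{\bar{x},r}^t)$ on the left into the $n$-dimensional area $\mathcal{H}^n(E_t\cap\mathfrak{D}_{\bar{x},r})$ is the slicing monotonicity $\Om_+\cap(B_r(\pi(z))\times\{t_1\})\subset\Om_+\cap(B_r(\pi(z))\times\{t_2\})$ for $t_1<t_2$ together with \eqref{lbdcylinderM}: integrating the slice areas vertically and using that $E_t$ dominates the relevant slice area gives a bound of the form $\mathcal{H}^{n+1}(W\cap\mathfrak{D}_{\bar{x},r}^t)\ge c\, r\,\mathcal{H}^n(E_t\cap\mathfrak{D}_{\bar{x},r})$, up to constants depending only on $C_D,C_N$.

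Next I would handle the boundary term $\mathcal{H}^n(\p(W\cap\mathfrak{D}_{\bar{x},r}^t))$. By the cut-off $\varphi$ this boundary splits into: (i) the part on $M$, namely $E_t$ truncated to $\mathfrak{D}_{\bar{x},r+\tau}^t$, whose $(n-1)$-dimensional ``edge'' on the slice $\{s=t+t_x\}$ is $\p E_t\cap\mathfrak{D}_{\bar{x},r+\tau}$, contributing the term $r\,\mathcal{H}^{n-1}(\p E_t\cap\mathfrak{D}_{\bar{x},r+\tau})$; (ii) the ``vertical'' part of $\p W$ coming from the region where $\varphi$ transitions, which by the area-minimizing comparison \eqref{ODEEsx}-type estimate applied on the annulus $\mathfrak{D}_{\bar{x},r+\tau}\setminus\mathfrak{D}_{\bar{x},r}$ and the co-area formula is bounded by $\f1\tau\int$ of slice areas, i.e.\ by $\f r\tau\mathcal{H}^n(E_t\cap\mathfrak{D}_{\bar{x},r})$ after using that the minimal graph $M$ is calibrated so its area is comparable to the volume swept below it; (iii) the piece of $\p\mathfrak{D}_{\bar{x},2r}^t$ used to close up $W$, which is absorbed because area-minimality lets one choose the truncation radius so that this piece has area $\le\mathcal{H}^n(E_t\cap\mathfrak{D}_{\bar{x},r+\tau})$ (a Fubini/co-area averaging over radii in $(r,r+\tau)$ produces a good slice). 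Assembling (i)--(iii) and absorbing constants into a single $\th=\th(C_D,C_N)$ yields \eqref{Etlessth}.

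The main obstacle I anticipate is step (ii)--(iii): making the cut-off and the choice of good truncation radius fully rigorous while keeping the minimal-graph structure under control. Unlike the Euclidean setting, here one cannot simply slide $M$ vertically, so one must carefully use that $\p\Om_+=M$ is area-minimizing to compare $E_t\cap B_s$ with the ``cap'' $\p B_s\cap\Om_+$ and then integrate an ODE inequality as in \eqref{ODEEsx}; the delicate point is that this comparison produces a lower bound on $\mathcal{H}^n(E_t\cap\mathfrak{D}_{\bar{x},r})$ only after one knows $E_t$ is nonempty and ``fills out'' a definite vertical extent, which is exactly where \eqref{lbdcylinderM} and the monotone-slices property are essential. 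A secondary technical nuisance is that $E_t$ may fail to have smooth boundary for generic $t$, so one should first establish the inequality for a.e.\ $t$ (where slices are rectifiable by the co-area formula) and the stated range $t\in(-r,r)$ follows by the structure of the claim.
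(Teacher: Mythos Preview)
Your approach has a genuine gap at the level of dimensions and exponents. You propose to apply the $(n+1)$-dimensional Sobolev inequality \eqref{Soba} to $W=\Om_+\cap\mathfrak{D}_{\bar{x},r}^t$, but $\p W$ consists of $n$-dimensional pieces (namely $E_t$, the slice $U^+_t=\Om_+\cap(B_{r-t}(x)\times\{t\})$, and part of $\p\mathfrak{D}$), and none of these is the $(n-1)$-dimensional set $\p E_t$ that appears on the right of \eqref{Etlessth}. Moreover \eqref{Soba} gives the exponent $\a/(\a+1)$, not the $(\a-1)/\a$ required. Your proposed conversion $\mathcal{H}^{n+1}(W\cap\mathfrak{D}_{\bar{x},r}^t)\ge c\,r\,\mathcal{H}^n(E_t\cap\mathfrak{D}_{\bar{x},r})$ is also false when $t$ is close to $r$: the left side is at most $(r-t)\mathcal{H}^n(B_{r-t}(x))$ while the right can be of order $r\mathcal{H}^n(B_{r-t}(x))$.

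The paper's proof works on the horizontal slice $\Si\times\{t\}$ rather than on $\Si\times\R$. The decisive observation is that $\p E_t\subset\Si\times\{t\}$ and is exactly the relative boundary of $U^-_t=\Om_-\cap(B_{r-t}(x)\times\{t\})$ inside $B_{r-t}(x)$. One then applies the $n$-dimensional isoperimetric inequality of Lemma~\ref{ISOPOIN} to $U^-_t$, which produces the correct exponent $(\a-1)/\a$ and directly yields $\mathcal{H}^{n-1}(\p E_t)\gtrsim r^{-1}(\mathcal{H}^n(B_r(x)))^{1/\a}(\mathcal{H}^n(U^-_t))^{(\a-1)/\a}$. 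Area-minimality then gives $\mathcal{H}^n(E_t\cap\mathfrak{D}_{\bar{x},r})\le(\sqrt2+1)\mathcal{H}^n(U^-_t)$, and \eqref{UplowM} converts $\mathcal{H}^n(B_r(x))$ to $\mathcal{H}^n(M\cap\mathfrak{D}_{\bar{x},r})$. The real work, which your outline does not touch, is proving that the \emph{other} side $U^+_t$ occupies a definite fraction of the slice (so that the $\min$ in Lemma~\ref{ISOPOIN} falls on $U^-_t$); this is done via a two-case argument depending on whether $\pi(M\cap\mathfrak{D}_{\bar{x},r})$ is large or small, using \eqref{Om+mfDxr} and \eqref{lbdcylinderM}. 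The parameter $\tau$ does not come from a cut-off: it enters because the inequality is trivially satisfied by the second term on the right whenever $\mathcal{H}^n(E_t\cap\mathfrak{D}_{\bar{x},r})\ge c(\tau/r)^\a\mathcal{H}^n(B_r(x))$, so one only needs the slice argument in the complementary small-$E_t$ regime.
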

\begin{proof}
Using $u-u(x)$ instead of $u$, we can assume $u(x)=0$.
By considering $-u,E_{-t}$ instead of $u,E_t$, respectively, we only need to show the case $E_t=M\cap \mathfrak{D}_{\bar{x},2r}^t$ for any $t\in(-r,r)$.

From \eqref{HnBRR-rp}, there exists a  constant $\be'\in(0,1)$ depending only on $C_D,C_N$ such that
\begin{equation}\aligned\label{HnBr1-hbebe}
\mathcal{H}^{n}\left(B_r(x)\setminus B_{(1-\be')r}(x)\right)\le\f{\hat{\be}}{16}\mathcal{H}^n(B_r(x)),
\endaligned
\end{equation}
where $\hat{\be}$ is the constant in \eqref{Om+mfDxr}.
From \eqref{VG} and \eqref{UplowM}, for $|t|>0$ we have
\begin{equation}\aligned\nonumber
\mathcal{H}^n\left(M\cap\mathfrak{D}_{\bar{x},|t|}\right)\ge\be_*\mathcal{H}^{n}(B_{|t|}(x))\ge\f{\be_*}{C_D}\mathcal{H}^{n}(B_r(x))\left(\f{|t|}r\right)^\a
\ge\f{\sqrt{2}\be_*}{2C_D}\mathcal{H}^{n}(M\cap\mathfrak{D}_{\bar{x},r})\f{|t|^\a}{r^\a}.
\endaligned
\end{equation}
Hence, we only need to prove \eqref{Etlessth} for $r>t\ge-\min\{\f12\tau,\f18\hat{\be}r,\be'r\}.$
Let $\de$ be a positive constant satisfying
\begin{equation}\aligned\label{HnEtDxr}
\mathcal{H}^n(E_t\cap\mathfrak{D}_{\bar{x},r}))=\de\left(\f {\tau}r\right)^\a\mathcal{H}^n(B_r(x)).
\endaligned
\end{equation}
We can assume $\de<\f18\hat{\be}$, or else we have complete the proof combining \eqref{UplowM}.

Let $U^-_t$ be a subset in $B_{r-t}(x)\times\{t\}$ defined by
$$U^-_t=\Om^-\cap\left(B_{r-t}(x)\times\{t\}\right),$$
and
\begin{equation}\aligned
U^+_t=\Om^+\cap\left(B_{r-t}(x)\times\{t\}\right)=B_{r-t}(x)\times\{t\}\setminus \overline{U^-_t}.
\endaligned
\end{equation}
We claim
\begin{equation}\aligned\label{U+low}
\mathcal{H}^n(U^+_t)\ge\de_*\mathcal{H}^n(B_{r-t}(x))
\endaligned
\end{equation}
for some positive constant $\de_*$ depending only on $C_D,C_N$.
The proof of the claim is divided into 2 cases as follows.
\begin{itemize}
  \item Case 1: $\mathcal{H}^n(\pi(M\cap \mathfrak{D}_{\bar{x},r}))\le\f14\hat{\be} \mathcal{H}^n(B_r(x))$.
  Let $\Om^+_{\bar{x},r}=\{(x,s)\in\Om^+\cap \mathfrak{D}_{\bar{x},r}|\, x\in \pi(M\cap \mathfrak{D}_{\bar{x},r}),\, s\in\R\}$.
  Then $\Om^+_{\bar{x},r}$ is symmetric with respect to $\Si\times\{0\}$, and
  $$\mathcal{H}^{n+1}(\Om^+_{\bar{x},r})\le 2r \mathcal{H}^n(\pi(M\cap \mathfrak{D}_{\bar{x},r}))\le\f12\hat{\be}r \mathcal{H}^n(B_r(x)).$$
Combining \eqref{Om+mfDxr}, we have
\begin{equation}\aligned\label{VolOm+Dxr}
\mathcal{H}^{n+1}\left(\Om^+\cap \mathfrak{D}_{\bar{x},r}\setminus\Om^+_{\bar{x},r}\right)\ge\f12\hat{\be}r\mathcal{H}^n(B_r(x)).
\endaligned
\end{equation}
For any $s\in\R$, we define a family of subsets $W_{\bar{x},r}(s)\subset B_r(x)$ by
$$\Om^+\cap \mathfrak{D}_{\bar{x},r}\setminus\Om^+_{\bar{x},r}=\bigcup_{s\in\R}  W_{\bar{x},r}(s)\times\{s\}.$$
Since $\Om^+\cap \mathfrak{D}_{\bar{x},r}\setminus\Om^+_{\bar{x},r}$ is symmetric with respect to $\Si\times\{0\}$, then
$W_{\bar{x},r}(s)=W_{\bar{x},r}(-s)$.
By the definition of $\Om^+_{\bar{x},r}$, we have
\begin{equation}\aligned
\mathcal{H}^{n+1}\left(\Om^+\cap \mathfrak{D}_{\bar{x},r}\setminus\Om^+_{\bar{x},r}\right)\le&\f14\hat{\be}r\mathcal{H}^n(W_{\bar{x},r}(0))+2r\mathcal{H}^n(W_{\bar{x},r}(\hat{\be}/8))\\
\le&\f14\hat{\be}r\mathcal{H}^n(B_{r}(x))+2r\mathcal{H}^n(W_{\bar{x},r}(\hat{\be}/8)).
\endaligned
\end{equation}
Combining \eqref{VolOm+Dxr}, we get
\begin{equation}\aligned
\mathcal{H}^n(W_{\bar{x},r}(-\hat{\be}/8))=\mathcal{H}^n(W_{\bar{x},r}(\hat{\be}/8))\ge\f18\hat{\be}\mathcal{H}^n(B_{r}(x)).
\endaligned
\end{equation}
Uniting with \eqref{lbdcylinderM} for $t\ge-\min\{\f12\tau,\f18\hat{\be}r,\be'r\}$, we complete the proof of the claim \eqref{U+low}.
  \item Case 2: $\mathcal{H}^n(\pi(M\cap \mathfrak{D}_{\bar{x},r}))>\f14\hat{\be} \mathcal{H}^n(B_r(x))$.
  From \eqref{HnEtDxr} and $\de<\f18\hat{\be}$, we have
$$\mathcal{H}^n(E_t)\le\de\mathcal{H}^n(B_r(x))\le\f18\hat{\be} \mathcal{H}^n(B_r(x)).$$
For $|t|\le\be'r$, we have
\begin{equation}\aligned
&\mathcal{H}^n(U^+_t)+\mathcal{H}^{n}\left(B_r(x)\setminus B_{\sqrt{1-(\be')^2}r}(x)\right)\ge\mathcal{H}^n(\pi(M\cap \mathfrak{D}_{\bar{x},r}\setminus E_t))\\
\ge&\mathcal{H}^n(\pi(M\cap \mathfrak{D}_{\bar{x},r}))-\mathcal{H}^n(E_t)\ge\f{\hat{\be}}4\mathcal{H}^n(B_r(x))-\f{\hat{\be}}8\mathcal{H}^n(B_r(x))=\f{\hat{\be}}{8}\mathcal{H}^n(B_r(x)),
\endaligned
\end{equation}
then combining \eqref{HnBr1-hbebe}, we have
\begin{equation}\aligned
\mathcal{H}^n(U^+_t)\ge&\f{\hat{\be}}{8}\mathcal{H}^n(B_r(x))-\mathcal{H}^{n}\left(B_r(x)\setminus B_{(1-\be')r}(x)\right)\\
\ge&\f{\hat{\be}}8\mathcal{H}^n(B_r(x))-\f{\hat{\be}}{16} \mathcal{H}^n(B_r(x))=\f{\hat{\be}}{16}\mathcal{H}^n(B_r(x)).
\endaligned
\end{equation}
Uniting with \eqref{lbdcylinderM} for $t\ge-\min\{\f12\tau,\f18\hat{\be}r,\be'r\}$, we complete the proof of the claim \eqref{U+low}.
\end{itemize}

Combining Lemma \ref{ISOPOIN} and \eqref{U+low}, for $t\ge-\min\{\f12\tau,\f18\hat{\be}r,\be'r\}$ we have
\begin{equation}\aligned\label{pElowx}
\mathcal{H}^{n-1}(\p E_t\cap \mathfrak{D}_{\bar{x},r+\tau})\ge\mathcal{H}^{n-1}(\p E_t\cap \mathfrak{D}_{(x,t),r-t})\ge\f{\de^*}r\left(\mathcal{H}^n(B_r(x))\right)^{\f{1}\a}\left(\mathcal{H}^n(U^-_t)\right)^{\f{\a-1}\a}
\endaligned
\end{equation}
for some positive constant $\de^*$ depending only on $C_D,C_N$.
Let $V_{x,t}$ be a domain enclosed by $\p \mathfrak{D}_{(x,t),r-t}^+$ and $M$ in $\mathfrak{D}_{(x,t),r-t}$ such that $\p V_{x,t}\cap \mathfrak{D}_{(x,t),r-t}^+\subset E_t\cup U^-_t$.
Since $M$ is area-minimizing in $B_R(p)\times\R$, then
\begin{equation}\aligned
\mathcal{H}^n(E_t\cap\mathfrak{D}_{\bar{x},r})=\mathcal{H}^n(\p V_{x,t}\cap M)\le\mathcal{H}^{n}(\p V_{x,t}\cap\p \mathfrak{D}_{(x,t),r-t})+\mathcal{H}^n(U^{-}_t)\le\left(\sqrt{2}+1\right)\mathcal{H}^n(U^{-}_t).
\endaligned
\end{equation}
Combining \eqref{pElowx}, we have
\begin{equation}\aligned
\mathcal{H}^{n-1}(\p E_t\cap \mathfrak{D}_{\bar{x},r+\tau})
\ge\left(\sqrt{2}+1\right)^{\f{1-\a}\a}\f{\de^*}r\left(\mathcal{H}^n(B_r(x))\right)^{\f{1}\a}\left(\mathcal{H}^n(E_t\cap\mathfrak{D}_{\bar{x},r})\right)^{\f{\a-1}\a}.
\endaligned
\end{equation}
With \eqref{UplowM}, we complete the proof.
\end{proof}

Moreover, we can get a different version of Lemma \ref{Sob*} as follows, which is a key ingredient of Neumann-Poincar$\mathrm{\acute{e}}$ inequality \eqref{B1pPITh*}.
\begin{lemma}\label{NPEt}
There is a constant $\th^*$ depending only on $C_D,C_N$ such that for any $\bar{x}=(x,u(x))$ with $B_{3r}(x)\subset B_R(p)$, for any $E_t=M\cap \mathfrak{D}_{\bar{x},3r}^t$ or $E_t=M\cap \mathfrak{D}_{\bar{x},3r}\setminus\mathfrak{D}_{\bar{x},3r}^t$ with $t\in(-r,r)$, we have
\begin{equation}\aligned
\left(\mathcal{H}^n(M\cap\mathfrak{D}_{\bar{x},r})\right)^{\f1\a}\left(\mathcal{H}^n(E_t\cap\mathfrak{D}_{\bar{x},r})\right)^{\f{\a-1}\a}\le \th^*r\mathcal{H}^{n-1}(\p E_t\cap \mathfrak{D}_{\bar{x},3r})
\endaligned
\end{equation}
provided $\mathcal{H}^n(E_t\cap\mathfrak{D}_{\bar{x},r})\le\f12\mathcal{H}^n(M\cap\mathfrak{D}_{\bar{x},r})$.
\end{lemma}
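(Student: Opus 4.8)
The plan is to obtain the estimate directly, without iteration and without a case distinction, by combining the isoperimetric inequality of Lemma~\ref{ISOPOIN} on $\Si$ with the area-minimizing comparison already performed in the proof of Lemma~\ref{Sob*}; the hypothesis $\mathcal{H}^n(E_t\cap\mathfrak D_{\bar x,r})\le\tfrac12\mathcal{H}^n(M\cap\mathfrak D_{\bar x,r})$ enters only to identify the superlevel side as the minority side. As in Lemma~\ref{Sob*} we may assume $u(x)=0$, and by replacing $u$ with $-u$ we take $E_t=M\cap\mathfrak D_{\bar x,3r}^t$. Put $L=\mathcal{H}^n(M\cap\mathfrak D_{\bar x,r})$ and $A=\mathcal{H}^n(E_t\cap\mathfrak D_{\bar x,r})$; then $\mathcal{H}^n\big((M\cap\mathfrak D_{\bar x,3r}\setminus\mathfrak D_{\bar x,3r}^t)\cap\mathfrak D_{\bar x,r}\big)=L-A$ for every $t$ with $\mathcal{H}^n(M\cap\{u=t\})=0$ (all but countably many), and the hypothesis reads $A\le\tfrac12L$, i.e. $A\le L-A$. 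I will prove $L^{1/\alpha}A^{(\alpha-1)/\alpha}\le\theta^* r\,\mathcal{H}^{n-1}(\partial E_t\cap\mathfrak D_{\bar x,3r})$ for such $t$; the remaining $t$ follow by approximation, using monotonicity of the sub/superlevel sets in $t$ and lower semicontinuity of the perimeter.

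Exactly as recorded in the proof of Lemma~\ref{Sob*}, $\partial E_t\cap\mathfrak D_{\bar x,3r}=M\cap\{u=t\}\cap\mathfrak D_{\bar x,3r}$, and since $\pi$ restricts to an isometry of the level set $\{u=t\}\subset M$ onto its image in $\Si$, one has $\mathcal{H}^{n-1}(\partial E_t\cap\mathfrak D_{\bar x,3r})=\mathcal{H}^{n-1}\big(\{u=t\}\cap B_{3r-|t|}(x)\big)$; as $3r-|t|>2r$ this is at least $\mathcal{H}^{n-1}\big(B_{2r}(x)\cap\partial\{u<t\}\big)$, where for a.e.\ $t$ the set $\{u<t\}$ is open with rectifiable boundary.

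Now the construction in the proof of Lemma~\ref{Sob*} of the region $V_{x,t}$ trapped between $M$ and the unit-slope cone $\partial\mathfrak D_{(x,t),r-t}^{+}$, together with area-minimality of $M$ in $B_R(p)\times\R$, yields, for every $t\in(-r,r)$,
\[
A=\mathcal{H}^n(E_t\cap\mathfrak D_{\bar x,r})\ \le\ (\sqrt2+1)\,\mathcal{H}^n\big(\{u>t\}\cap B_{r-t}(x)\big)\ \le\ (\sqrt2+1)\,\mathcal{H}^n\big(\{u>t\}\cap B_{2r}(x)\big),
\]
and the same comparison applied to $-u$ (equivalently, to the complementary piece $M\cap(\mathfrak D_{\bar x,3r}\setminus\mathfrak D_{\bar x,3r}^t)$ and the cone $\partial\mathfrak D_{(x,t),r+t}^{-}$) gives
\[
L-A\ \le\ (\sqrt2+1)\,\mathcal{H}^n\big(\{u<t\}\cap B_{r+t}(x)\big)\ \le\ (\sqrt2+1)\,\mathcal{H}^n\big(\{u<t\}\cap B_{2r}(x)\big).
\]
Since $A\le L-A$, both the sublevel and the superlevel set over $B_{2r}(x)$ have measure at least $A/(\sqrt2+1)$, so applying Lemma~\ref{ISOPOIN} on $B_{2r}(x)$ to $\Om=\{u<t\}\cap B_{2r}(x)$,
\[
\mathcal{H}^{n-1}(\partial E_t\cap\mathfrak D_{\bar x,3r})\ \ge\ \mathcal{H}^{n-1}\big(B_{2r}(x)\cap\partial\{u<t\}\big)\ \ge\ \frac{\big(\mathcal{H}^n(B_{2r}(x))\big)^{1/\alpha}}{2^{1+1/\alpha}C_S\,r}\Big(\frac{A}{\sqrt2+1}\Big)^{\frac{\alpha-1}{\alpha}} .
\]
Finally $\mathcal{H}^n(B_{2r}(x))\ge\mathcal{H}^n(B_r(x))\ge\tfrac1{\sqrt2}\,\mathcal{H}^n(M\cap\mathfrak D_{\bar x,r})=\tfrac1{\sqrt2}L$ by \eqref{UplowM}, and rearranging the last display gives $L^{1/\alpha}A^{(\alpha-1)/\alpha}\le\theta^* r\,\mathcal{H}^{n-1}(\partial E_t\cap\mathfrak D_{\bar x,3r})$ with $\theta^*=2^{1+\frac{3}{2\alpha}}(\sqrt2+1)^{\frac{\alpha-1}{\alpha}}C_S$, depending only on $C_D,C_N$ since $\alpha=\log_2 C_D$ and $C_S=C_S(C_D,C_N)$.

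The one point needing care is the claim that the inequality $A\le(\sqrt2+1)\mathcal{H}^n(\{u>t\}\cap B_{r-t}(x))$ extracted from the proof of Lemma~\ref{Sob*} holds unconditionally: in that proof it is derived purely from the area-minimality of $M$ and the unit slope of $\partial\mathfrak D_{(x,t),r-t}^{+}$, while the reductions made earlier there ($t$ not too negative, $\delta$ small) are used only to bound the level-set term from below, not for this comparison; hence it is valid for every $t\in(-r,r)$ as soon as $B_{2r}(x)\subset B_R(p)$, which is guaranteed here by $B_{3r}(x)\subset B_R(p)$, and by symmetry the analogous bound for $-u$ holds as well. I expect no further obstacle: the identification of $\partial E_t\cap\mathfrak D_{\bar x,3r}$ with the level set, the isometry of $\pi$ on level sets, and the passage from a.e.\ $t$ to all $t\in(-r,r)$ are exactly as in Lemma~\ref{Sob*}, and everything else is elementary manipulation of the constants.
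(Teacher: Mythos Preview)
Your argument is correct and follows essentially the same strategy as the paper's proof: two area-minimizing comparisons (one for the superlevel piece, one for the sublevel piece) combined with Lemma~\ref{ISOPOIN} on a ball in $\Si$. The only cosmetic differences are that the paper carries out both comparisons in the single diamond $\mathfrak{D}_{(x,t),r+|t|}$ (so $U^\pm_t$ live in the same disk $B_{r+|t|}(x)\times\{t\}$) and first uses the hypothesis to force $\mathcal{H}^n(U^+_t)\gtrsim\mathcal{H}^n(B_r(x))$ before invoking ISOPOIN, whereas you run the two comparisons in $\mathfrak{D}_{(x,t),r-t}^+$ and $\mathfrak{D}_{(x,t),r+t}^-$ separately, embed both slices into $B_{2r}(x)$, and feed $\min\ge A/(\sqrt2+1)$ directly into the isoperimetric minimum; this is a slightly more symmetric packaging of the same estimate.
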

\begin{proof}
Without loss of generality, we can assume $u(x)=0$.
By considering $-u,E_{-t}$ instead of $u,E_t$, respectively, we only need to show the case $E_t=M\cap \mathfrak{D}_{\bar{x},3r}^t$ for any $t\in(-r,r)$.
Let $U^-_t$ be a subset of $B_{r+|t|}(x)\times\{t\}$ defined by
$$U^-_t=\Om^-\cap\left(B_{r+|t|}(x)\times\{t\}\right),$$
and $U^+_t=B_{r+|t|}(x)\times\{t\}\setminus U^-_t$.
Let $V_{x,t}$ be a domain enclosed by $\p\left(\mathfrak{D}_{(x,t),r+|t|}\setminus \mathfrak{D}_{(x,t),r+|t|}^+\right)$ and $M$ in $\mathfrak{D}_{(x,t),r+|t|}$ such that $\p V_{x,t}\cap \mathfrak{D}_{(x,t),r+|t|}\subset (M\setminus E_t)\cup U^+_t$.
Since $M$ is area-minimizing in $B_R(p)\times\R$, then
\begin{equation}\aligned
\mathcal{H}^n(\p V_{x,t}\cap M)\le\mathcal{H}^{n}(\p V_{x,t}\cap\p \mathfrak{D}_{(x,t),r+|t|})+\mathcal{H}^n(U^{+}_t)\le\left(\sqrt{2}+1\right)\mathcal{H}^n(U^{+}_t).
\endaligned
\end{equation}
Combining $\mathcal{H}^n(E_t\cap\mathfrak{D}_{\bar{x},r}))\le\f12\mathcal{H}^n(M\cap\mathfrak{D}_{\bar{x},r}))$, we have
\begin{equation}\aligned
\f12\mathcal{H}^n(M\cap\mathfrak{D}_{\bar{x},r}))\le\mathcal{H}^n(\mathfrak{D}_{\bar{x},r}\cap M\setminus E_t) \le\mathcal{H}^n(\p V_{x,t}\cap M)\le\left(\sqrt{2}+1\right)\mathcal{H}^n(U^{+}_t).
\endaligned
\end{equation}
With \eqref{UplowM}, we have
\begin{equation}\aligned
\mathcal{H}^n(U^+_t)\ge\f{\sqrt{2}-1}2\be_*\mathcal{H}^n(B_r(x)).
\endaligned
\end{equation}
Combining Lemma \ref{ISOPOIN} we have
\begin{equation}\aligned\label{pElowx*}
\mathcal{H}^{n-1}(\p E_t\cap \mathfrak{D}_{\bar{x},3r})\ge\mathcal{H}^{n-1}(\p E_t\cap \mathfrak{D}_{(x,t),r+|t|})\ge\f{\ep_*}r\left(\mathcal{H}^n(B_r(x))\right)^{\f{1}\a}\left(\mathcal{H}^n(U^-_t)\right)^{\f{\a-1}\a}
\endaligned
\end{equation}
for some positive constant $\ep_*$ depending only on $C_D,C_N$.
Let $W_{x,t}$ be a domain enclosed by $\p \mathfrak{D}_{(x,t),r+|t|}^+$ and $M$ in $\mathfrak{D}_{(x,t),r+|t|}$ such that $\p W_{x,t}\cap \mathfrak{D}_{(x,t),r+|t|}^+\subset E_t\cup U^-_t$.
Area-minimizing $M$ in $B_R(p)\times\R$ implies
\begin{equation}\aligned
\mathcal{H}^n(E_t\cap\mathfrak{D}_{\bar{x},r})\le\mathcal{H}^{n}(\p W_{x,t}\cap\p \mathfrak{D}_{(x,t),r+|t|})+\mathcal{H}^n(U^{-}_t)\le\left(\sqrt{2}+1\right)\mathcal{H}^n(U^{-}_t).
\endaligned
\end{equation}
Combining \eqref{pElowx*}, we have
\begin{equation}\aligned
\mathcal{H}^{n-1}(\p E_t\cap \mathfrak{D}_{\bar{x},3r})
\ge\left(\sqrt{2}+1\right)^{\f{1-\a}\a}\f{\ep_*}r\left(\mathcal{H}^n(B_r(x))\right)^{\f{1}\a}\left(\mathcal{H}^n(E_t\cap\mathfrak{D}_{\bar{x},r})\right)^{\f{\a-1}\a}.
\endaligned
\end{equation}
With \eqref{UplowM}, we complete the proof.
\end{proof}

Assume $u>0$ on $B_R(p)$.
Let $\phi$ be a monotonic increasing or monotonic decreasing $C^1$-function on $\R^+$, and $\Phi(x)=\phi(u(x))$ on $B_R(p)\subset\Si$.
For any $\bar{x}\in\Si\times\R$, and $r>0$, we put
$$\mathscr{B}_{r}(\bar{x})=M\cap \mathfrak{D}_{\bar{x},r}.$$
Let $\na$ be the Levi-Civita connection of $M$ with respect to its induced metric from $\Si\times\R$.
Let us prove Sobolev inequalities for $\Phi$ on $M$ using Lemma \ref{Sob*}.
\begin{lemma}\label{SobMG}
Suppose $\Phi>0$ on $B_R(p)$.
There are
\begin{equation}\aligned\label{P11111}
\left(\mathcal{H}^n(\mathscr{B}_{r}(\bar{x}))\right)^{\f1\a}\left(\int_{\mathscr{B}_{r}(\bar{x})}\Phi^{\f \a{\a-1}}\right)^{\f{\a-1}\a}\le\th r\left(\int_{\mathscr{B}_{r+\tau}(\bar{x})}|\na\Phi|+\f1{\tau}\int_{\mathscr{B}_{r}(\bar{x})}\Phi\right),
\endaligned
\end{equation}
and
\begin{equation}\aligned\label{P22222}
\left(\mathcal{H}^n(\mathscr{B}_{r}(\bar{x}))\right)^{\f1\a}\left(\int_{\mathscr{B}_{r}(\bar{x})}\Phi^{\f{2\a}{\a-1}}\right)^{\f{\a-1}\a}
\le\th \left(r^2\int_{\mathscr{B}_{r+\tau}(\bar{x})}|\na\Phi|^2+\f{2r}{\tau}\int_{\mathscr{B}_{r+\tau}(\bar{x})}\Phi^2\right)
\endaligned
\end{equation}
for all $r\ge\tau>0$ and $B_{2r}(x)\subset B_R(p)$ with $\bar{x}=(x,u(x))\in M$, where $\th$ is the constant depending only on $C_D,C_N$ defined in Lemma \ref{Sob*}.
\end{lemma}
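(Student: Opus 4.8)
The plan is to derive both inequalities from the isoperimetric-type estimate of Lemma~\ref{Sob*} by the classical level-set (Federer--Fleming) argument, combining the co-area formula for $\Phi$ on $M$ with the layer-cake representation of $\int\Phi^{p}$. What makes Lemma~\ref{Sob*} directly applicable is that, since $\phi$ is monotonic and $\Phi>0$, for every $s>0$ the super-level set $\{\Phi>s\}$ is a sub- or super-level set of $u$, hence inside $M\cap\mathfrak{D}_{\bar x,2r}$ it agrees, up to an $\mathcal{H}^n$-null set, with one of the sets $E_t$ of Lemma~\ref{Sob*}: it equals $E_{t(s)}:=M\cap\mathfrak{D}_{\bar x,2r}^{t(s)}$ when $\phi$ is increasing and $E_{t(s)}:=M\cap(\mathfrak{D}_{\bar x,2r}\setminus\mathfrak{D}_{\bar x,2r}^{t(s)})$ when $\phi$ is decreasing, where $t(s)+u(x)$ is the corresponding level of $u$; and since $r+\tau<2r$, the boundary of this set inside $\mathfrak{D}_{\bar x,r+\tau}$ is exactly the level set $\{\Phi=s\}\cap\mathscr{B}_{r+\tau}(\bar x)$. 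On $\mathscr{B}_r(\bar x)\subset\mathfrak{D}_{\bar x,r}$ one has $|u-u(x)|<r$, so for a.e. $s>0$ either $t(s)\in(-r,r)$ and Lemma~\ref{Sob*} applies to $E_{t(s)}$, or $\{\Phi>s\}\cap\mathscr{B}_r(\bar x)$ is empty (and contributes nothing) or equals $\mathscr{B}_r(\bar x)$ (and then $(\mathcal{H}^n(\mathscr{B}_r(\bar x)))^{\f1\a}(\mathcal{H}^n(\mathscr{B}_r(\bar x)))^{\f{\a-1}\a}=\mathcal{H}^n(\mathscr{B}_r(\bar x))$, which is dominated by the term $\th\f r\tau\int_{\mathscr{B}_r(\bar x)}\Phi$ since $\Phi\ge s$ there and $\th,\,r/\tau\ge1$). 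We may therefore assume $\phi$ increasing, the decreasing case being identical with the second form of $E_t$.

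To prove \eqref{P11111} I would first apply Minkowski's integral inequality to $\Phi=\int_0^\infty\mathbf{1}_{\{\Phi>s\}}\,ds$, getting
\begin{equation}\nonumber
\left(\int_{\mathscr{B}_r(\bar x)}\Phi^{\f\a{\a-1}}\right)^{\f{\a-1}\a}\le\int_0^\infty\left(\mathcal{H}^n\big(\{\Phi>s\}\cap\mathscr{B}_r(\bar x)\big)\right)^{\f{\a-1}\a}\,ds.
\end{equation}
Multiplying by $(\mathcal{H}^n(\mathscr{B}_r(\bar x)))^{\f1\a}$, using the identifications above to replace $\{\Phi>s\}\cap\mathscr{B}_r(\bar x)$ by $E_{t(s)}\cap\mathfrak{D}_{\bar x,r}$, applying Lemma~\ref{Sob*} under the integral sign, and integrating in $s\in(0,\infty)$, I get
\begin{equation}\nonumber
\left(\mathcal{H}^n(\mathscr{B}_r(\bar x))\right)^{\f1\a}\left(\int_{\mathscr{B}_r(\bar x)}\Phi^{\f\a{\a-1}}\right)^{\f{\a-1}\a}\le\th r\int_0^\infty\mathcal{H}^{n-1}\big(\p E_{t(s)}\cap\mathfrak{D}_{\bar x,r+\tau}\big)\,ds+\th\f r\tau\int_0^\infty\mathcal{H}^{n}\big(E_{t(s)}\cap\mathfrak{D}_{\bar x,r}\big)\,ds.
\end{equation}
Since $\p E_{t(s)}\cap\mathfrak{D}_{\bar x,r+\tau}=\{\Phi=s\}\cap\mathscr{B}_{r+\tau}(\bar x)$ for a.e. $s$, the co-area formula for $\Phi$ on $M$ turns the first integral into $\int_{\mathscr{B}_{r+\tau}(\bar x)}|\na\Phi|$, and Tonelli's theorem turns the second into $\int_{\mathscr{B}_r(\bar x)}\Phi$; this is exactly \eqref{P11111}.

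For \eqref{P22222} I would apply \eqref{P11111} to $\Phi^2=\psi(u)$, where $\psi=\phi^2$ is again a positive, $C^1$, monotonic (in the same direction, since $\Phi>0$) function on $\R^+$; using $|\na(\Phi^2)|=2\Phi|\na\Phi|$ this gives
\begin{equation}\nonumber
\left(\mathcal{H}^n(\mathscr{B}_r(\bar x))\right)^{\f1\a}\left(\int_{\mathscr{B}_r(\bar x)}\Phi^{\f{2\a}{\a-1}}\right)^{\f{\a-1}\a}\le\th r\left(2\int_{\mathscr{B}_{r+\tau}(\bar x)}\Phi|\na\Phi|+\f1\tau\int_{\mathscr{B}_r(\bar x)}\Phi^2\right).
\end{equation}
Then Cauchy--Schwarz followed by Young's inequality gives $2\int_{\mathscr{B}_{r+\tau}(\bar x)}\Phi|\na\Phi|\le r\int_{\mathscr{B}_{r+\tau}(\bar x)}|\na\Phi|^2+\f1r\int_{\mathscr{B}_{r+\tau}(\bar x)}\Phi^2$; combining this with $r\ge\tau$ and $\int_{\mathscr{B}_r(\bar x)}\Phi^2\le\int_{\mathscr{B}_{r+\tau}(\bar x)}\Phi^2$ collects the right-hand side into $\th\big(r^2\int_{\mathscr{B}_{r+\tau}(\bar x)}|\na\Phi|^2+\f{2r}\tau\int_{\mathscr{B}_{r+\tau}(\bar x)}\Phi^2\big)$, which is \eqref{P22222}. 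The borderline value $\tau=r$ (which Lemma~\ref{Sob*} excludes) follows by applying the above with $\tau'<r$ and letting $\tau'\uparrow r$, since $\mathscr{B}_{r+\tau'}(\bar x)\subset\mathscr{B}_{2r}(\bar x)$ and both sides depend continuously on the radius.

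The geometric content of the lemma is already contained in Lemma~\ref{Sob*}, so I expect the main obstacle here to be not a new idea but the careful bookkeeping of the level-set argument: one must verify that the super-level sets of $\Phi$ coincide, up to $\mathcal{H}^n$-null sets, with the sets $E_{t(s)}$ for admissible $t(s)$; that $\p E_{t(s)}\cap\mathfrak{D}_{\bar x,r+\tau}$ is genuinely the level set $\{\Phi=s\}$ inside $\mathfrak{D}_{\bar x,r+\tau}$ (so that the part of $\p E_{t(s)}$ lying on $\p\mathfrak{D}_{\bar x,2r}^{t(s)}$ drops out, using $r+\tau<2r$), and that the co-area formula for $\Phi$ then converts $\int_0^\infty\mathcal{H}^{n-1}(\p E_{t(s)}\cap\mathfrak{D}_{\bar x,r+\tau})\,ds$ into $\int|\na\Phi|$; and that the few values of $s$ for which Lemma~\ref{Sob*} does not literally apply cause no harm.
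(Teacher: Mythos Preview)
Your proposal is correct and follows essentially the same route as the paper: identify the super-level sets of $\Phi$ with the sets $E_t$ of Lemma~\ref{Sob*}, apply that lemma level-by-level, and then combine the co-area formula with the layer-cake representation; the second inequality is obtained in both cases by applying the first to $\Phi^2$ and using Cauchy's inequality. The only cosmetic difference is that where you invoke Minkowski's integral inequality to pass from $\int_0^\infty(\mathcal{H}^n(E_t\cap\mathfrak{D}_{\bar x,r}))^{\f{\a-1}\a}\,dt$ to $(\int\Phi^{\a/(\a-1)})^{(\a-1)/\a}$, the paper cites a result of Hardy--Littlewood--P\'olya for the same step; your handling of the edge cases ($t(s)\notin(-r,r)$ and $\tau=r$) is in fact slightly more explicit than the paper's.
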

\begin{proof}
We fix $r>0$ and $\tau\in(0,r]$.
For any $t\in\R$, we define an open set in $M$ by
$$E_t=\{x\in M\cap \mathfrak{D}_{\bar{x},r+\tau}|\, \Phi(x)>t\}.$$
From the monotonicity of $\phi$, there exists a number $s_{t}\in\R$ with $\phi(s_{t})=t$ such that
$$E_{t}=\{(q,u(q))\in M\cap \mathfrak{D}_{\bar{x},r+\tau}|\, u(q)>s_{t}\}$$
or
$$E_{t}=\{(q,u(q))\in M\cap \mathfrak{D}_{\bar{x},r+\tau}|\, u(q)<s_{t}\}.$$
From Lemma \ref{Sob*},
\begin{equation}\aligned\label{ISOEtpEt}
\left(\mathcal{H}^n(\mathscr{B}_r(x))\right)^{\f1\a}\left(\mathcal{H}^n(E_{t}\cap\mathfrak{D}_{\bar{x},r})\right)^{\f{\a-1}\a}
\le \th\left(r\mathcal{H}^{n-1}(\p E_{t}\cap \mathfrak{D}_{\bar{x},r+\tau})+\f r{\tau}\mathcal{H}^n(E_{t}\cap\mathfrak{D}_{\bar{x},r}))\right).
\endaligned
\end{equation}
for the constant $\th$ depending only on $C_D,C_N$.

By co-area formula,
\begin{equation}\aligned
\int_{M\cap \mathfrak{D}_{\bar{x},r+\tau}}|\na\Phi|=\int_0^\infty\mathcal{H}^{n-1}(\p E_t\cap \mathfrak{D}_{\bar{x},r+\tau})dt.
\endaligned
\end{equation}
By Fubini's theorem, the $(n+1)$-dimensional Hausdorff measure of $\{(x,t)\in M\cap \mathfrak{D}_{\bar{x},r}|\ 0<t<\Phi(x)\}$ is equal to
\begin{equation}\aligned
\int_{M\cap \mathfrak{D}_{\bar{x},r}}\Phi=\int_0^\infty\mathcal{H}^n(E_t\cap \mathfrak{D}_{\bar{x},r})dt.
\endaligned
\end{equation}
Then from \eqref{ISOEtpEt} we have
\begin{equation}\aligned
\int_{M\cap \mathfrak{D}_{\bar{x},r+\tau}}|\na\Phi|+\f1{\tau}\int_{M\cap \mathfrak{D}_{\bar{x},r}}\Phi=&\int_0^\infty\mathcal{H}^{n-1}(\p E_t\cap \mathfrak{D}_{\bar{x},r+\tau})dt+\f1{\tau}\int_0^\infty\mathcal{H}^n(E_t\cap \mathfrak{D}_{\bar{x},r})dt\\
\ge&\f{\left(\mathcal{H}^n(\mathscr{B}_r(\bar{x}))\right)^{\f1\a}}{\th r}\int_0^{\infty}\left(\mathcal{H}^n(E_t\cap \mathfrak{D}_{\bar{x},r})\right)^{\f{\a-1}\a}dt.
\endaligned
\end{equation}
From a result of Hardy-Littlewood-P$\mathrm{\acute{o}}$lya (see also the proof of co-area formula in \cite{SY}), one gets
\begin{equation}\aligned
\int_{M\cap \mathfrak{D}_{\bar{x},r+\tau}}|\na\Phi|+\f1{\tau}\int_{M\cap \mathfrak{D}_{\bar{x},r}}\Phi
\ge&\f{\left(\mathcal{H}^n(\mathscr{B}_r(\bar{x}))\right)^{\f1\a}}{\th r}\left(\f \a{\a-1}\int_0^\infty t^\f1{\a-1}\mathcal{H}^{n}(E_t\cap \mathfrak{D}_{\bar{x},r})dt\right)^{\f{\a-1}\a}\\
=&\f{\left(\mathcal{H}^n(\mathscr{B}_r(\bar{x}))\right)^{\f1\a}}{\th r}\left(\int_0^\infty \mathcal{H}^{n}\left(E_{s^{\f{\a-1}\a}}\cap \mathfrak{D}_{\bar{x},r}\right)ds\right)^{\f{\a-1}\a}\\
=&\f{\left(\mathcal{H}^n(\mathscr{B}_r(\bar{x}))\right)^{\f1\a}}{\th r}\left(\int_{M\cap \mathfrak{D}_{\bar{x},r}}\Phi^{\f \a{\a-1}}d\mu\right)^{\f{\a-1}\a}.
\endaligned
\end{equation}
This completes the proof of \eqref{P11111}.

Note that $\phi^2$ is still a monotonic increasing or monotonic decreasing $C^1$-function on $\R^+$.
For any $r\ge\tau$ and $B_{r+\tau}(x)\subset B_R(p)$ with $(x,u(x))\in M$, from \eqref{P11111} and Cauchy inequality we have
\begin{equation}\aligned
\left(\mathcal{H}^n(\mathscr{B}_{r}(\bar{x}))\right)^{\f1\a}\left(\int_{\mathscr{B}_{r}(\bar{x})}\Phi^{\f{2\a}{\a-1}}\right)^{\f{\a-1}\a}
&\le\th r\left(2\int_{\mathscr{B}_{r+\tau}(\bar{x})}\Phi|\na\Phi|+\f1{\tau}\int_{\mathscr{B}_{r}(\bar{x})}\Phi^2\right)\\
\le&\th \left(r^2\int_{\mathscr{B}_{r+\tau}(\bar{x})}|\na\Phi|^2+\f{2r}{\tau}\int_{\mathscr{B}_{r+\tau}(\bar{x})}\Phi^2\right).
\endaligned
\end{equation}
This completes the proof of \eqref{P22222}.
\end{proof}

Let us show the following Neumann-Poincar$\mathrm{\acute{e}}$ inequality using Lemma \ref{NPEt}.
\begin{lemma}\label{NPTh*}
Let $\th^*$ be the constant in Lemma \ref{NPEt}. Then
\begin{equation}\aligned\label{B1pPITh*}
\int_{\mathscr{B}_{r}(\bar{x})}|\Phi-\bar{\Phi}_{\bar{x},r}|\le 2\th^* r\int_{\mathscr{B}_{3r}(\bar{x})}|\na \Phi|
\endaligned
\end{equation}
for all $B_{3r}(x)\subset B_{R}(p)$ with $(x,u(x))\in M$,
where $\bar{\Phi}_{x,r}$ is the mean of $\Phi$ on $\mathscr{B}_{r}(\bar{x})$, i.e., $\bar{\Phi}_{\bar{x},r}=\fint_{\mathscr{B}_{r}(\bar{x})}\Phi$.
\end{lemma}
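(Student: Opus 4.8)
The plan is to deduce this inequality from the isoperimetric-type estimate of Lemma~\ref{NPEt} by the standard co-area argument, in exact parallel with the way Lemma~\ref{CheegerC*} was obtained from Lemma~\ref{CheegerC}. As in the preceding lemmas, normalize $u(x)=0$, so that $\mathfrak{D}_{\bar x,3r}^{\,s}=\{(y,\sigma):d(x,y)+|\sigma|<3r,\ \sigma>s\}$, and write $m=\bar\Phi_{\bar x,r}$. Since $\int_{\mathscr{B}_r(\bar x)}(\Phi-m)=0$, the two quantities $\int_{\{\Phi>m\}\cap\mathscr{B}_r(\bar x)}(\Phi-m)$ and $\int_{\{\Phi<m\}\cap\mathscr{B}_r(\bar x)}(m-\Phi)$ are equal; denoting their common value by $I$ we have $\int_{\mathscr{B}_r(\bar x)}|\Phi-m|=2I$, so it suffices to bound $I$. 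Also, at least one of $\{\Phi>m\}\cap\mathscr{B}_r(\bar x)$ and $\{\Phi<m\}\cap\mathscr{B}_r(\bar x)$ has $\mathcal H^n$-measure at most $\tfrac12\mathcal H^n(\mathscr{B}_r(\bar x))$; assume it is the former (the other case is handled identically, with sublevel sets of $\Phi$ replacing superlevel sets).

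For $t\ge 0$ set $E_t=\{q\in\mathscr{B}_{3r}(\bar x):\Phi(q)>m+t\}$. Because $\phi$ is monotone, choosing $s_t$ with $\phi(s_t)=m+t$ we have either $E_t=M\cap\mathfrak{D}_{\bar x,3r}^{\,s_t}$ or $E_t=M\cap(\mathfrak{D}_{\bar x,3r}\setminus\mathfrak{D}_{\bar x,3r}^{\,s_t})$, i.e. one of the two shapes admitted in Lemma~\ref{NPEt}. For every $t$ with $\mathcal H^n(E_t\cap\mathfrak{D}_{\bar x,r})>0$, the level $s_t$ is a value taken by $u$ on $\mathscr{B}_r(\bar x)$, hence $s_t\in(-r,r)$: any $(q,u(q))$ with $d(x,q)+|u(q)|<r$ has $|u(q)|<r$, and the supremum and infimum of $u$ over $\mathscr{B}_r(\bar x)$ are strictly inside $(-r,r)$ by continuity of $u$ together with $u(x)=0$. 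Moreover $E_t\cap\mathfrak{D}_{\bar x,r}\subset\{\Phi>m\}\cap\mathscr{B}_r(\bar x)$, so $\mathcal H^n(E_t\cap\mathfrak{D}_{\bar x,r})\le\tfrac12\mathcal H^n(M\cap\mathfrak{D}_{\bar x,r})$. Since $B_{3r}(x)\subset B_R(p)$ by hypothesis, Lemma~\ref{NPEt} applies and yields
\[
\left(\mathcal H^n(\mathscr{B}_r(\bar x))\right)^{\f1\a}\left(\mathcal H^n(E_t\cap\mathfrak{D}_{\bar x,r})\right)^{\f{\a-1}\a}\le\th^*r\,\mathcal H^{n-1}(\p E_t\cap\mathfrak{D}_{\bar x,3r}).
\]
Since $\mathcal H^n(E_t\cap\mathfrak{D}_{\bar x,r})\le\mathcal H^n(\mathscr{B}_r(\bar x))$ gives $\left(\mathcal H^n(E_t\cap\mathfrak{D}_{\bar x,r})\right)^{\f{\a-1}\a}\ge\left(\mathcal H^n(\mathscr{B}_r(\bar x))\right)^{-\f1\a}\mathcal H^n(E_t\cap\mathfrak{D}_{\bar x,r})$, this reduces to the linear bound $\mathcal H^n(E_t\cap\mathfrak{D}_{\bar x,r})\le\th^*r\,\mathcal H^{n-1}(\p E_t\cap\mathfrak{D}_{\bar x,3r})$, which of course also holds when $\mathcal H^n(E_t\cap\mathfrak{D}_{\bar x,r})=0$.

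It remains to integrate in $t$. By Fubini's theorem $I=\int_0^\infty\mathcal H^n(E_t\cap\mathfrak{D}_{\bar x,r})\,dt$, while by the co-area formula on $M$ one has $\int_0^\infty\mathcal H^{n-1}(\p E_t\cap\mathfrak{D}_{\bar x,3r})\,dt=\int_{\{\Phi>m\}\cap\mathscr{B}_{3r}(\bar x)}|\na\Phi|\le\int_{\mathscr{B}_{3r}(\bar x)}|\na\Phi|$. Integrating the linear bound over $t\in(0,\infty)$ therefore gives $I\le\th^*r\int_{\mathscr{B}_{3r}(\bar x)}|\na\Phi|$, whence $\int_{\mathscr{B}_r(\bar x)}|\Phi-\bar\Phi_{\bar x,r}|=2I\le 2\th^*r\int_{\mathscr{B}_{3r}(\bar x)}|\na\Phi|$, which is \eqref{B1pPITh*}. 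The only step requiring genuine care is checking, for each $t$, that $E_t$ belongs to one of the two families handled by Lemma~\ref{NPEt}, that its defining level $s_t$ lies in $(-r,r)$, and that the sub-half-volume hypothesis is met; once that is in place the rest is a routine co-area computation, so I do not anticipate a substantial obstacle.
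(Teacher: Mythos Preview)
Your proposal is correct and follows essentially the same route as the paper's proof: both assume without loss of generality that the superlevel set at the mean has at most half the measure, apply Lemma~\ref{NPEt} to the sets $E_t$ to get the linear bound $\mathcal{H}^n(E_t\cap\mathfrak{D}_{\bar{x},r})\le\th^*r\,\mathcal{H}^{n-1}(\p E_t\cap\mathfrak{D}_{\bar{x},3r})$, and then integrate in $t$ via the co-area formula. You are in fact more careful than the paper in explicitly verifying that the level $s_t$ lies in $(-r,r)$ and that $E_t$ has the form required by Lemma~\ref{NPEt}; the paper's proof applies Lemma~\ref{NPEt} without spelling out these checks.
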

\begin{proof}
Let $\bar{\Phi}_{\bar{x},r}$ be the mean of $\Phi$ on $\mathscr{B}_{r}(\bar{x})$, i.e.,
$$\bar{\Phi}_{\bar{x},r}=\fint_{\mathscr{B}_{r}(\bar{x})}\Phi=\f1{\mathcal{H}^n(\mathscr{B}_{r}(\bar{x}))}\int_{\mathscr{B}_{r}(\bar{x})}\Phi.$$
For any fixed $\bar{x}=(x,u(x))\in M$, $r>0$ with $B_{3r}(x)\subset B_R(p)$, let
$$U^+_{s,t}=\{y\in \mathscr{B}_{s}(\bar{x})|\, \Phi(y)>\bar{\Phi}_{\bar{x},r}+t\},$$
and
$$U^-_{s,t}=\{y\in \mathscr{B}_{s}(\bar{x})|\, \Phi(y)<\bar{\Phi}_{\bar{x},r}+t\}$$
for all $s\in(0,3r]$ and $t\in\R$.
Without loss of generality, we assume $\mathcal{H}^n(U^+_{r,0})\le\mathcal{H}^n(U^-_{r,0})$. Then
$$\mathcal{H}^n(U^+_{r,t})\le\mathcal{H}^n(U^+_{r,0})\le\mathcal{H}^n(U^-_{r,0})\le\mathcal{H}^n(U^-_{r,t})$$
for any $t\ge0$.
In particular,
\begin{equation}\aligned
\mathcal{H}^n(U^-_{r,t})\ge\f12\mathcal{H}^n(\mathscr{B}_{r}(\bar{x})).
\endaligned
\end{equation}

From Lemma \ref{NPEt}, there holds
\begin{equation}\aligned
\mathcal{H}^{n-1}\left(\p U^+_{3r,t}\cap \mathscr{B}_{3r}(\bar{x})\right)\ge\f1{\th^* r}\left(\mathcal{H}^n(\mathscr{B}_{r}(\bar{x}))\right)^{\f{1}\a}\left(\mathcal{H}^n(U^+_{r,t})\right)^{\f{\a-1}\a}\ge\f1{\th^* r}\mathcal{H}^n(U^+_{r,t}),
\endaligned
\end{equation}
where $\th^*$ is the constant in Lemma \ref{NPEt}.
Combining co-area formula, we have
\begin{equation}\aligned
&\int_{U^+_{r,0}}\left(\Phi-\bar{\Phi}_{\bar{x},r}\right)=\int_0^\infty\mathcal{H}^n(U^+_{r,t})dt\\
\le&\th^* r\int_0^\infty\mathcal{H}^{n-1}\left(\p U^+_{3r,t}\cap \mathscr{B}_{3r}(\bar{x})\right)dt\le \th^* r\int_{\mathscr{B}_{3r}(\bar{x})}|\na\Phi|.
\endaligned
\end{equation}
Combining the definition of $\bar{\Phi}_{\bar{x},r}$, we get
\begin{equation}\aligned\label{fbfxrdf}
&\int_{\mathscr{B}_{r}(\bar{x})}\left|\Phi-\bar{\Phi}_{\bar{x},r}\right|=\int_{U^+_{r,0}}\left(\Phi-\bar{\Phi}_{\bar{x},r}\right)-\int_{U^-_{r,0}}\left(\Phi-\bar{\Phi}_{\bar{x},r}\right)\\
=&2\int_{U^+_{r,0}}\left(\Phi-\bar{\Phi}_{\bar{x},r}\right)\le 2\th^* r\int_{\mathscr{B}_{3r}(\bar{x})}|\na\Phi|.
\endaligned
\end{equation}
This completes the proof.
\end{proof}
Remark. From the proof of Lemma \ref{NPTh*}, clearly the inequality \eqref{B1pPITh*} holds without the monotonicity of $\phi$.

\section{Harnack's inequality for minimal graphic functions}

For $R>0$,
let $M$ be a minimal graph over $B_{4R}(p)\subset\Si$ with the graphic function $u$.
We always assume that the minimal graphic function $u$ is not a constant and $u>0$ on $B_{4R}(p)$.
Let $\tilde{u}(z)=u(x)$ for any $z=(x,u(x))\in M$, and we usually denote $\tilde{u}$ by $u$, which will not cause confusion from the context in general.
Let $\De$ be the Laplacian on $M$ with respect to its induced metric from $\Si\times\R$. Then $u$ is harmonic on $M$ (see also (2.2) in \cite{DJX1}), i.e.,
\begin{equation}\aligned\label{Deu=0}
\De u=0.
\endaligned
\end{equation}

Denote $\bar{p}=(p,u(p))\in\Si\times\R$. Recall $\mathscr{B}_{r}(\bar{p})=M\cap \mathfrak{D}_{\bar{p},r}$ and
$$\mathfrak{D}_{\bar{p},r}=\{(y,s)\in\Si\times\R|\, d(p,y)+|s-u(p)|<r\}.$$
For any function $\psi\in L^k(\mathscr{B}_r(\bar{p}))$ with each $k>0$ and $r\in(0,4R)$, we set
$$||\psi||_{k,r}=\left(\f1{\mathcal{H}^n(\mathscr{B}_r(\bar{p}))}\int_{\mathscr{B}_r(\bar{p})}|\psi|^kd\mu\right)^{1/k},$$
where $d\mu$ is the volume element of $M$.
Let $\r_{\bar{p}}$ be a Lipschitz function on $\Si\times\R$ defined by
$$\r_{\bar{p}}(\bar{x})=d(x,p)+|t-u(p)|$$
for any $\bar{x}=(x,t)$. 
Then $\mathfrak{D}_{\bar{p},r}=\{z\in\Si\times\R|\, \r_{\bar{p}}(z)<r\}$.
Let $\na$ and $\bn$ be the Levi-Civita connections of $M$ and $\Si\times\R$, respectively.
Then
\begin{equation}\aligned\label{nabarr}
|\na\r_{\bar{p}}|\le|\bn\r_{\bar{p}}|\le\sqrt{2}.
\endaligned
\end{equation}

Let $\phi$ be a monotonic increasing or monotonic decreasing positive $C^1$-function on $\R^+$, and $\Phi(x)=\phi(u(x))$ on $B_{4R}(p)\subset\Si$.
Now, let us carry out De Giorgi-Nash-Moser iteration for getting the Harnack' inequality of $u$ with the help of
the Sobolev inequality and the Neumann-Poincar$\mathrm{\acute{e}}$ inequality for the function $\Phi$.
\begin{lemma}
Suppose $\De\Phi\ge0$ on $M$. For any $r\in(0,2R]$, there is a constant $c_0$ depending only on $C_D,C_N$ such that
\begin{equation}\aligned\label{subharmkge2}
||\Phi||_{\infty,\de r}\le c_0(1-\de)^{-\f {2\a}k}||\Phi||_{k,r}
\endaligned
\end{equation}
for any $k\ge2$ and $\de\in(0,1)$.
\end{lemma}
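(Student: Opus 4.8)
The plan is to run the standard Moser iteration for the subsolution $\Phi$, where the geometry of $M$ enters only through the two analytic inequalities already established: the Sobolev inequality \eqref{P22222} for positive monotonic $C^1$-functions of $u$, and the volume bound \eqref{UplowM} which makes $\mathcal{H}^n(\mathscr{B}_r(\bar{x}))$ comparable to $\mathcal{H}^n(B_r(x))$ and hence controls the normalizing constants $||\cdot||_{k,r}$. First I would establish the basic Caccioppoli/reverse-Hölder step: for $p\ge 2$ and a cutoff $\eta$ supported in $\mathfrak{D}_{\bar p,r'}$ with $\eta\equiv 1$ on $\mathfrak{D}_{\bar p,r''}$ ($r''<r'$), testing $\De\Phi\ge0$ against $\eta^2\Phi^{p-1}$ and integrating by parts on $M$ gives
\begin{equation}\aligned\nonumber
\int_M \eta^2\Phi^{p-2}|\na\Phi|^2 \le C p^2\int_M |\na\eta|^2\Phi^p,
\endaligned
\end{equation}
so that, writing $w=\Phi^{p/2}$ and using $|\na\r_{\bar p}|\le\sqrt2$ from \eqref{nabarr} to choose $|\na\eta|\le 2/(r'-r'')$, one gets $\int_M|\na(\eta w)|^2\le C p^2 (r'-r'')^{-2}\int_{\mathscr{B}_{r'}(\bar x)}w^2$.

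Next I would feed this into the Sobolev inequality \eqref{P22222} applied to the monotonic $C^1$-function $\phi^{p/2}$ (note $w=\phi(u)^{p/2}$ is exactly a monotonic function of $u$, so the lemma applies) with the radius pair $(r'', r'-r'')$ in place of $(r,\tau)$; after dividing by $\mathcal{H}^n(\mathscr{B}_{r''}(\bar x))$ and using \eqref{UplowM} together with the volume-growth bounds \eqref{VG},\eqref{VOLBrbp} to compare the volumes at the nested scales, this yields the reverse-Hölder inequality
\begin{equation}\aligned\nonumber
||\Phi||_{p\chi,\,r''}\le \left(\frac{C p^2 r^2}{(r'-r'')^2}\right)^{1/p}||\Phi||_{p,\,r'},
\endaligned
\end{equation}
with $\chi=\a/(\a-1)>1$. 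Then I would iterate with $p_j=2\chi^j$ and radii $r_j$ interpolating geometrically between $\de r$ and $r$, i.e. $r_j-r_{j+1}\sim (1-\de)r\,2^{-j}$; the product $\prod_j (Cp_j^2\, 4^j/(1-\de)^2)^{1/p_j}$ converges because $\sum_j (\log\chi^j + j)/\chi^j<\infty$, and the exponent of $(1-\de)$ in the limit is $-\sum_j 2/p_j = -\a/(\a-1)\cdot$const, which I would absorb into the stated bound $(1-\de)^{-2\a/k}$ for the starting exponent $k=2$. This gives \eqref{subharmkge2} for $k=2$.

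Finally, to pass from $k=2$ to general $k\ge 2$: $k\ge2$ is the easy direction — by Hölder's inequality (with the normalized measure on $\mathscr{B}_r(\bar p)$) one has $||\Phi||_{2,r}\le ||\Phi||_{k,r}$, so the $k=2$ case immediately implies the general one, with the exponent $-2\a/k$ being what one needs to carry the scaling correctly when this lemma is later chained with the corresponding lower-bound (supersolution) estimate; I would double-check the bookkeeping of the $(1-\de)$ exponent here since that is where the dependence on $k$ must be tracked honestly rather than swept into $c_0$. The main obstacle, and the only place genuine care is needed, is the volume comparison when transferring from \eqref{P22222} (which is stated with an extra term $\frac{2r}{\tau}\int\Phi^2$ and with the volume factor $\mathcal{H}^n(\mathscr{B}_r(\bar x))$ at the inner radius) to a clean normalized reverse-Hölder inequality: one must verify that choosing $\tau=r'-r''$ makes the $\frac{2r}{\tau}\int\Phi^2$ term of the same order as the Caccioppoli term (it is, since both scale like $(r'-r'')^{-2}$ after the $w=\Phi^{p/2}$ substitution absorbs one power of $p$), and that \eqref{UplowM} plus \eqref{VG} control the ratio $\mathcal{H}^n(B_r(x))/\mathcal{H}^n(B_{r''}(x))$ by a constant depending only on $C_D$ when $r''\ge \de r$ — this is where the doubling property is essential and where all constants ultimately reduce to $C_D, C_N$.
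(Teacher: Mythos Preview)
Your approach is essentially the same as the paper's: Caccioppoli inequality from $\De\Phi\ge0$, fed into the Sobolev inequality \eqref{P22222} (applied to the monotone function $\phi^{\ell}$), then Moser iteration with geometrically decreasing radii. The paper's proof carries out exactly these steps and handles the volume-ratio bookkeeping implicitly (since $r_{i-1}\le 2r_i$ at every stage, \eqref{UplowM} plus doubling bounds the ratio $\mathcal{H}^n(\mathscr{B}_{r_{i-1}})/\mathcal{H}^n(\mathscr{B}_{r_i})$ by a constant independent of $\de$, so it is absorbed into $c_\th$).

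The one place where your plan does not quite deliver the stated lemma is the passage from $k=2$ to general $k\ge2$. Using H\"older to bound $||\Phi||_{2,r}\le||\Phi||_{k,r}$ gives you $||\Phi||_{\infty,\de r}\le c_0(1-\de)^{-\a}||\Phi||_{k,r}$, i.e.\ the exponent $-\a$ rather than $-2\a/k$; for $k>2$ this is strictly weaker. You flagged this yourself as the place to ``double-check the bookkeeping,'' and indeed the bookkeeping fails: the $(1-\de)$-dependence cannot be improved after the fact by H\"older. The paper instead starts the iteration directly at exponent $k$, taking $\ell_i=\tfrac{k}{2}\chi^i$ with $\chi=\a/(\a-1)$, so that $\sum_i 1/\ell_i=2\a/k$ and the correct exponent $(1-\de)^{-2\a/k}$ falls out of the product. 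This is a one-line change to your scheme (start at $p_0=k$ rather than $p_0=2$) and everything else goes through unchanged.
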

\begin{proof}
For any constant $\ell\ge1$ and any Lipschitz function $\e$ with supp$\e\subset \mathscr{B}_{4R}(\bar{p})$, from $\De\Phi\ge0$ we have
\begin{equation}\aligned
0\ge&-\int \Phi^{2\ell-1}\e^2\De \Phi=(2\ell-1)\int \Phi^{2\ell-2}\e^2|\na \Phi|^2+2\int \Phi^{2\ell-1}\e\na \Phi\cdot\na\e\\
\ge&(2\ell-1)\int \Phi^{2\ell-2}\e^2|\na \Phi|^2-\f{\ell}2\int \Phi^{2\ell-2}\e^2|\na \Phi|^2-\f2{\ell}\int \Phi^{2\ell}|\na\e|^2\\
=&\left(\f32\ell-1\right)\int \Phi^{2\ell-2}\e^2|\na \Phi|^2-\f2{\ell}\int \Phi^{2\ell}|\na\e|^2,
\endaligned
\end{equation}
which infers
\begin{equation}\aligned\label{naellle2}
\int\left|\na \Phi^\ell\right|^2\e^2\le\f{4\ell}{3\ell-2}\int \Phi^{2\ell}|\na\e|^2\le4\int \Phi^{2\ell}|\na\e|^2.
\endaligned
\end{equation}
For each $r,\tau>0$ with $\tau\le r$, let $\e$ be a Lipschitz function defined by $\e=1$ on $\mathscr{B}_{r+\f\tau2}(\bar{p})$, $\e=\f2\tau\left(r+\tau-\r_{\bar{p}}\right)$ on $\mathscr{B}_{r+\tau}(\bar{p})\setminus\mathscr{B}_{r+\f\tau2}(\bar{p})$,
$\e=0$ outside $\mathscr{B}_{r+\tau}(\bar{p})$.
Then $|\na\e|\le2\sqrt{2}/\tau$ from \eqref{nabarr}.
Combining Lemma \ref{SobMG} and \eqref{naellle2}, we have
\begin{equation}\aligned\nonumber
||\Phi^{2\ell}||_{\f {\a}{\a-1},r}\le& \th\left(r^2||\na \Phi^{\ell}||_{2,r+\f\tau2}^2+\f{4r}{\tau}||\Phi^{2\ell}||_{1,r+\f\tau2}\right)
\le \th\left(4r^2\int \Phi^{2\ell}|\na\e|^2+\f{4r}{\tau}||\Phi^{2\ell}||_{1,r+\tau}\right)\\
\le& \th\left(32\f{r^2}{\tau^2}||\Phi^{2\ell}||_{1,r+\tau}+\f{4r}{\tau}||\Phi^{2\ell}||_{1,r+\tau}\right)\le c_\th \f{r^2}{\tau^2}||\Phi^{2\ell}||_{1,r+\tau}
\endaligned
\end{equation}
with $c_\th=36\th$, where $\th$ is the constant depending only on $C_D,C_N$ defined in Lemma \ref{Sob*}.
Then one has
\begin{equation}\aligned\label{ite}
||\Phi||_{\f {2\a\ell}{\a-1},r}\le c_\th^{\f1{2\ell}}r^{\f1{\ell}}\tau^{-\f1{\ell}}||\Phi||_{2\ell,r+\tau}.
\endaligned
\end{equation}

For any $\de\in(0,1)$, $k\ge2$ and any integer $i\ge-1$, set $\ell_i=\f k2\left(\f {\a}{\a-1}\right)^i$, $\tau_i=2^{-(1+i)}(1-\de)r$ and $r_i=r-\sum_{j=0}^i\tau_j=\de r+\tau_i\le r$.
By iterating \eqref{ite}, for $i\ge0$ we have
\begin{equation}\aligned\label{itei}
||\Phi||_{\f {2\a}{\a-1}\ell_{i},r_i}\le c_\th^{\f1{2\ell_i}}r_i^{\f1{\ell_i}}\tau_i^{-\f1{\ell_i}}||\Phi||_{\f {2\a}{\a-1}\ell_{i-1},r_{i-1}}\le\prod_{j=0}^i c_\th^{\f1{2\ell_j}}r_j^{\f1{\ell_j}}\tau_j^{-\f1{\ell_j}}||\Phi||_{k,r}.
\endaligned
\end{equation}
Note that $\tau_j/r_j\ge2^{-(1+j)}(1-\de)$ for every $j\ge0$.
Letting $i\rightarrow\infty$, then
\begin{equation}\aligned\label{Phiinftyder}
||\Phi||_{\infty,\de r}\le&\prod_{j=0}^\infty c_\th^{\f1{2\ell_j}}\left(\f{2^{1+j}}{(1-\de)}\right)^{\f1{\ell_j}}||\Phi||_{k,r}
=c_\th^{\sum_{j=0}^\infty\f1{2\ell_j}}2^{\sum_{j=0}^\infty\f{1+j}{\ell_j}}(1-\de)^{-\sum_{j=0}^\infty\f1{\ell_j}}||\Phi||_{k,r}.
\endaligned
\end{equation}
By the definition of $\ell_j$, it follows that
\begin{equation}\aligned
\sum_{j=0}^\infty\f1{2\ell_j}=&\f1k\sum_{j=0}^\infty\left(\f {\a-1}\a\right)^{j}=\f \a{k},\\
\sum_{j=0}^\infty\f{j+1}{\ell_j}=&\f2k\sum_{j=0}^\infty (j+1)\left(\f {\a-1}\a\right)^{j}=\f{2\a^2}{k}.
\endaligned
\end{equation}
Hence from \eqref{Phiinftyder} we get
\begin{equation}\aligned
||\Phi||_{\infty,\de r}\le 2^{\f{2\a^2}k}c_\th^{\f{\a}k}(1-\de)^{-\f{2\a}k}||\Phi||_{k,r}.
\endaligned
\end{equation}
This completes the proof.
\end{proof}

\begin{theorem}\label{MVinequR}
Suppose $\De\Phi\ge0$ on $M$.
For any $k>0$, there is a constant $c_{k}$ depending only on $k,C_D,C_N$ such that for any $r\in(0,2R]$ one has
\begin{equation}\aligned\label{MVinequR*}
\sup_{\mathscr{B}_{\de r}(\bar{p})}\Phi\le c_{k}(1-\de)^{-\f{2\a}k}||\Phi||_{k,r}.
\endaligned
\end{equation}
\end{theorem}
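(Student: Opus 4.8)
The plan is to upgrade the $L^k$ mean value inequality \eqref{subharmkge2}, which is available for $k\ge2$, to all $k>0$ by the classical interpolation--absorption argument of De Giorgi--Nash--Moser. For $k\ge2$ there is nothing to prove: since $\Phi=\phi(u)$ is continuous on $M$ one has $\sup_{\mathscr{B}_{\de r}(\bar{p})}\Phi=||\Phi||_{\infty,\de r}$, so \eqref{MVinequR*} is exactly \eqref{subharmkge2} with $c_k=c_0$. Hence fix $k\in(0,2)$ from now on.

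First I would introduce $f(s)=\sup_{\mathscr{B}_{sr}(\bar{p})}\Phi$ for $s\in[\f12,1]$; this is finite (the set $\mathscr{B}_{sr}(\bar{p})$ is relatively compact in $B_{4R}(p)\times\R$ and $\Phi$ is continuous) and non-decreasing. For $\f12\le s<s'\le1$, applying \eqref{subharmkge2} with exponent $2$, center $\bar{p}$ and radius $s'r\,(\le r\le 2R)$, taking $\de=s/s'\in[\f12,1)$ there, and using $1-s/s'=(s'-s)/s'\ge s'-s$, gives $f(s)\le c_0(s'-s)^{-\a}||\Phi||_{2,s'r}$. Since $0<k<2$, the elementary pointwise bound $\Phi^2\le f(s')^{2-k}\Phi^k$ on $\mathscr{B}_{s'r}(\bar{p})$ yields
$$||\Phi||_{2,s'r}^2\le f(s')^{2-k}\,\f{\mathcal{H}^n(\mathscr{B}_r(\bar{p}))}{\mathcal{H}^n(\mathscr{B}_{s'r}(\bar{p}))}\,||\Phi||_{k,r}^k,$$
and by \eqref{UplowM} together with \eqref{VG} (and $s'\ge\f12$) the volume ratio is bounded by a constant $C_1=C_1(C_D,C_N)$. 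Combining the two displays,
$$f(s)\le C_2\,(s'-s)^{-\a}\,f(s')^{1-\frac k2}\,||\Phi||_{k,r}^{\frac k2},\qquad C_2=C_2(C_D,C_N),\quad \tfrac12\le s<s'\le1.$$

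Next I would absorb the $f(s')$ factor: Young's inequality $ab^{1-k/2}\le\f12 b+C(k)\,a^{2/k}$ with $b=f(s')$ and $a=C_2(s'-s)^{-\a}||\Phi||_{k,r}^{k/2}$ gives $f(s)\le\f12 f(s')+C_3(s'-s)^{-2\a/k}||\Phi||_{k,r}$ for all $\f12\le s<s'\le1$, with $C_3=C_3(k,C_D,C_N)$. Since $f$ is bounded on $[\f12,1]$, the standard iteration lemma (if $g\ge0$ is bounded on $[\rho_0,\rho_1]$ and $g(s)\le\tau g(s')+A(s'-s)^{-\g}$ for $\rho_0\le s<s'\le\rho_1$ with $\tau\in(0,1)$, then $g(\rho_0)\le c(\tau,\g)A(\rho_1-\rho_0)^{-\g}$), applied on $[\de,1]$, gives $f(\de)\le c_k(1-\de)^{-2\a/k}||\Phi||_{k,r}$ for every $\de\in[\f12,1)$. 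For $\de\in(0,\f12)$ one has $\mathscr{B}_{\de r}(\bar{p})\subset\mathscr{B}_{r/2}(\bar{p})$, hence $\sup_{\mathscr{B}_{\de r}(\bar{p})}\Phi\le f(\f12)\le c_k 2^{2\a/k}||\Phi||_{k,r}\le c_k 2^{2\a/k}(1-\de)^{-2\a/k}||\Phi||_{k,r}$, which establishes \eqref{MVinequR*} in all cases. The argument needs no analytic input beyond \eqref{subharmkge2}, \eqref{UplowM} and \eqref{VG}, so the only point that requires care --- and it is the main, rather mild, obstacle --- is keeping every constant dependent only on $k$, $C_D$, $C_N$ (recall $\a=\log_2C_D$), in particular in the volume-ratio estimate and in the choice of the Young constant.
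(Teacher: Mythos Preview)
Your argument is correct and follows the same bootstrap strategy as the paper: reduce to $k<2$, use \eqref{subharmkge2} with exponent $2$ at radii $sr<s'r$, interpolate $||\Phi||_{2,s'r}^2\le f(s')^{2-k}||\Phi||_{k,r}^k$ (with the volume ratio controlled by \eqref{UplowM} and \eqref{VG}), and iterate over nested balls. The only cosmetic difference is the packaging of the iteration: the paper iterates the multiplicative inequality $||\Phi||_{\infty,r_{i-1}}\le C\,2^{i\a}(1-\de)^{-\a}||\Phi||_{k,r}^{k/2}||\Phi||_{\infty,r_i}^{1-k/2}$ directly along the dyadic sequence $r_i=r-2^{-i}(1-\de)r$ and sums the resulting geometric series, whereas you first apply Young's inequality to pass to the additive form $f(s)\le\tfrac12 f(s')+C(s'-s)^{-2\a/k}||\Phi||_{k,r}$ and then invoke the standard absorption lemma. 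Both routes are classical and yield the same dependence $c_k(1-\de)^{-2\a/k}$; your version is marginally cleaner since it outsources the telescoping to a known lemma, while the paper's direct computation is self-contained.
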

\begin{proof}
From \eqref{subharmkge2}, we only need to show \eqref{MVinequR*} for $0<k<2$.
From \eqref{subharmkge2}, we have
\begin{equation}\aligned\label{Phigenerate}
||\Phi||_{\infty,\de r}\le& c_{0}(1-\de)^{-\a}||\Phi||_{2,r}.
\endaligned
\end{equation}
Put $r_0=\de r$, $r_i=\de r+\sum_{j=1}^i2^{-j}(1-\de)r=r-2^{-i}(1-\de)r$, and $\de_i=r_{i-1}/r_i$. Then
$$1-\de_i=\f{r_i-r_{i-1}}{r_i}=\f{2^{-i}(1-\de)r}{r-2^{-i}(1-\de)r}\ge2^{-i}(1-\de).$$
Note that $\f{1+\de}2r\le r_i\le r$ for all $i\ge1$. Then from \eqref{VG}\eqref{UplowM} we have
$$\mathcal{H}^n(\mathscr{B}_r(\bar{p}))\le\sqrt{2}\mathcal{H}^{n}(B_r(p))\le 2^{\a+\f12}C_D\mathcal{H}^{n}\left(B_{\f r2}(p)\right)\le 2^{\a+\f12}C_D\be_*^{-1}\mathcal{H}^{n}(\mathscr{B}_{r_i}(\bar{p})).$$
From \eqref{Phigenerate}, for $i\ge1$ we have
\begin{equation}\aligned
||\Phi||_{\infty,r_{i-1}}\le& c_0(1-\de_i)^{-\a}||\Phi||_{2,r_i}\le c_0(1-\de_i)^{-\a}||\Phi||_{k,r_i}^{\f k2}||\Phi||_{\infty,r_i}^{1-\f k2}\\
\le& c_02^{i\a}(1-\de)^{-\a}\left(2^{\a+\f12}C_D\be_*^{-1}\right)^{\f12}||\Phi||_{k,r}^{\f k2}||\Phi||_{\infty,r_i}^{1-\f k2}.
\endaligned
\end{equation}
Set $\widetilde{c_0}=c_0\left(2^{\a+\f12}C_D\be_*^{-1}\right)^{\f12}$, which is a constant depending only on $C_D,C_N$.
Iterating the above inequality implies
\begin{equation}\aligned\label{Phiinftyr0}
||\Phi||_{\infty,r_0}\le& \widetilde{c_0}2^{\a}(1-\de)^{-\a}||\Phi||_{k,r}^{\f k2}||\Phi||_{\infty,r_1}^{1-\f k2}\\
\le& \prod_{j=0}^i\left(\widetilde{c_0}2^{(j+1)\a}(1-\de)^{-\a}||\Phi||_{k,r}^{\f k2}\right)^{(1-k/2)^j}||\Phi||_{\infty,r_{i+1}}^{(1-k/2)^{i+1}}.
\endaligned
\end{equation}
By a direct computation, one has
\begin{equation}\aligned
\sum_{j=0}^\infty& \left(1-\f k2\right)^j=\f{2}{k},\\
\sum_{j=0}^\infty& (j+1)\left(1-\f k2\right)^j=\f{4}{k^2}.
\endaligned
\end{equation}
Letting $i\rightarrow\infty$ in \eqref{Phiinftyr0} infers
\begin{equation}\aligned
||\Phi||_{\infty,\de r}\le& \left(\prod_{j=0}^{\infty}\left(\widetilde{c_0}2^{(j+1)\a}(1-\de)^{-\a}\right)^{(1-k/2)^j}\right)||\Phi||_{k,r}^{\sum_{j=0}^{\infty}\f k2(1-k/2)^j}\\
=&\left(\prod_{j=0}^{\infty}\left(\widetilde{c_0}(1-\de)^{-\a}\right)^{(1-k/2)^j}\right)2^{\a\sum_{j=0}^{\infty}(j+1)(1-k/2)^j}||\Phi||_{k,r}^{\sum_{j=0}^{\infty}\f k2(1-k/2)^j}\\
=&2^{\f{4\a}{k^2}}\widetilde{c_0}^{\f2k}(1-\de)^{-\f{2\a}{k}}||\Phi||_{k,r}.
\endaligned
\end{equation}
This completes the proof.
\end{proof}

\begin{theorem}\label{Harnack}
Suppose $u>0$ on $B_{4R}(p)$. Then $u$ satisfies Harnack's inequality on $\mathscr{B}_{2R}(\bar{p})$:
\begin{equation}\aligned\label{Harnack}
\sup_{\mathscr{B}_{2R}(\bar{p})}u\le\vartheta\inf_{\mathscr{B}_{2R}(\bar{p})}u
\endaligned
\end{equation}
for some constant $\vartheta$ depending only on $C_D,C_N$.
\end{theorem}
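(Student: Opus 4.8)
The plan is to derive the Harnack inequality for the positive harmonic function $u$ on $M$ by combining a supremum bound for subsolutions with a matching infimum bound for supersolutions, the latter obtained by applying the former to a suitable decreasing $C^1$-function of $u$ and then invoking the Neumann-Poincar\'e inequality of Lemma \ref{NPTh*}. Concretely, since $u>0$ on $B_{4R}(p)$ and $\De u=0$ on $M$, I first choose $\phi$ to be the identity (which is increasing and positive on $\R^+$), so $\Phi=u$ satisfies $\De\Phi=0\ge0$, and Theorem \ref{MVinequR} with a fixed dilation factor (say $\de=1/2$, and $r=4R$ so $\de r=2R$) gives
\begin{equation}\aligned\nonumber
\sup_{\mathscr{B}_{2R}(\bar{p})}u\le c_{1}\,||u||_{1,4R}
\endaligned
\end{equation}
after first applying it with some $k\ge1$ and then lowering the exponent to $k=1$ via the interpolation already used in the proof of Theorem \ref{MVinequR}. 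This is the "sup $\le$ $L^1$-average" half.

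For the other half I would run De Giorgi--Nash--Moser on negative powers of $u$. Fix $\phi(s)=s^{-q}$ for a small $q>0$; this is positive and \emph{monotonic decreasing} on $\R^+$, so $\Phi=u^{-q}$ is an admissible function for Lemmas \ref{SobMG} and \ref{NPTh*}, and since $u$ is harmonic one computes $\De\Phi=\De(u^{-q})=q(q+1)u^{-q-2}|\na u|^2\ge0$, i.e. $\Phi$ is a subsolution. Theorem \ref{MVinequR} applied to this $\Phi$ (again with a fixed dilation) yields $\sup_{\mathscr{B}_{2R}(\bar p)}u^{-q}\le c\,||u^{-q}||_{k,4R}$ for any chosen $k>0$, equivalently
\begin{equation}\aligned\nonumber
\left(\inf_{\mathscr{B}_{2R}(\bar{p})}u\right)^{-1}\le c^{1/q}\left(\fint_{\mathscr{B}_{4R}(\bar p)}u^{-kq}\right)^{1/(kq)}.
\endaligned
\end{equation}
Thus it remains to show that for a suitable (small) exponent $p_0=kq>0$ one has $\big(\fint u^{p_0}\big)\big(\fint u^{-p_0}\big)\le C$, i.e. a reverse-H\"older/John--Nirenberg type bound on $\mathscr{B}_{4R}(\bar p)$; combined with the two displays above (and $\fint u^{p_0}\le (\fint u)^{p_0}$ by Jensen, or more precisely controlling $||u||_{1}$ by $||u||_{p_0}$ via the sup bound itself), this closes the chain to produce $\sup u\le\vartheta\inf u$ with $\vartheta=\vartheta(C_D,C_N)$.

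The crux — and the main obstacle — is the cross term $\big(\fint u^{p_0}\big)\big(\fint u^{-p_0}\big)\le C$, which I would obtain from a John--Nirenberg argument applied to $w=\log u$. The key input is that $w$ is a function of bounded mean oscillation on the relevant balls: testing $\De u=0$ against $\e^2 u^{-1}$ gives $\int|\na\log u|^2\e^2\le \int|\na\e|^2$ (a Caccioppoli estimate for $\log u$), and then the Neumann--Poincar\'e inequality \eqref{B1pPITh*} (valid for $\Phi=\log u$ by the Remark following Lemma \ref{NPTh*}, which notes monotonicity of $\phi$ is not needed) together with the volume-doubling property \eqref{UplowM} and \eqref{VG} for the sets $\mathscr{B}_r(\bar x)$ gives $\fint_{\mathscr{B}_r(\bar x)}|\log u-\overline{(\log u)}_{\bar x,r}|\le C r\,\fint_{\mathscr{B}_{3r}(\bar x)}|\na\log u|\le C'$; one must check the overlap/chaining of the cylinders $\mathfrak{D}_{\bar x,r}$ behaves like a space of homogeneous type so that the classical John--Nirenberg lemma applies and furnishes $p_0>0$ with $\fint e^{p_0|w-\bar w|}\le C$. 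Granting that, the remaining steps are the routine bookkeeping of exponents and dilation constants already illustrated in the proofs of the two preceding theorems.
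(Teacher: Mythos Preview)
Your overall architecture is exactly Moser's and matches the paper: apply Theorem \ref{MVinequR} to $u$ and to $u^{-q}$ (both admissible since $\phi(s)=s$ is increasing, $\phi(s)=s^{-q}$ is decreasing, and $\De u^{-q}=q(q+1)u^{-q-2}|\na u|^2\ge0$), and bridge the two halves via exponential integrability of $w=\log u$. The paper does precisely this, arriving at $\fint_{\mathscr{B}_{r/2}(\bar p)}u^{\la_*}\fint_{\mathscr{B}_{r/2}(\bar p)}u^{-\la_*}\le C_*$ and then closing as you describe.

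The substantive difference is in how the cross--over bound is obtained. You propose an abstract John--Nirenberg argument: Caccioppoli for $\log u$ plus Lemma \ref{NPTh*} give a uniform BMO bound on the ``balls'' $\mathscr{B}_r(\bar x)$, and then John--Nirenberg yields $\fint e^{\la_*|w-\bar w|}\le C$. The paper instead runs a direct Moser iteration on $w$ at the single center $\bar p$, using the equation $\De w=-|\na w|^2$ to derive recursively $\int\e^2|w|^q|\na w|^2\le\cdots$ and hence $||w||_{k,r/2}\le c^*k$ for all integers $k\ge1$; Stirling's formula then gives the exponential integrability. The paper's route is entirely self--contained using only Lemmas \ref{SobMG} and \ref{NPTh*} at the fixed center $\bar p$. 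Your route is conceptually cleaner but requires checking that $(M,\r,d\mu)$---with $\r(\bar x,\bar y)=d(\pi(\bar x),\pi(\bar y))+|u(\pi(\bar x))-u(\pi(\bar y))|$---is a space of homogeneous type on which John--Nirenberg holds with the weak (factor--$3$) Poincar\'e inequality \eqref{B1pPITh*}; this is plausible from \eqref{UplowM} and \eqref{VG}, but is not a one--liner and is exactly the black box the paper's hands--on iteration avoids.

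Two small corrections. First, Theorem \ref{MVinequR} is stated for $r\in(0,2R]$ (and its proof actually uses $\mathscr{B}_{4r}(\bar p)$), so you cannot take $r=4R$; follow the paper and work at scale $r\le R$ to get Harnack on $\mathscr{B}_{r/4}(\bar p)$, then chain over points $z\in\mathscr{B}_{2R}(\bar p)$. Second, your aside ``$\fint u^{p_0}\le(\fint u)^{p_0}$ by Jensen'' is in the wrong direction for $0<p_0<1$; the correct link (as the paper does) is to apply Theorem \ref{MVinequR} with $k=\la_*$ directly, so that $\sup u^{\la_*}\le c\,\fint u^{\la_*}\le cC_*(\fint u^{-\la_*})^{-1}$, and then combine with $\sup u^{-\la_*}\le c'\fint u^{-\la_*}$.
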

\begin{proof}
Let
$$w=\log u-\fint_{\mathscr{B}_{r}(\bar{p})}\log u,$$
then from \eqref{Deu=0} one has
\begin{equation}\aligned\label{Dew}
\De w=-|\na w|^2.
\endaligned
\end{equation}
Let $\e$ be a Lipschitz function with compact support in $M\cap B_{4R}(p)$.
From \eqref{Dew}, for any $q\ge0$ integrating by parts implies
\begin{equation}\aligned
\int |\na w|^2\e^2|w|^q=&-\int\e^2|w|^q\De w=2\int\e|w|^q\na\e\cdot\na w+q\int\e^2|w|^{q-2}w|\na w|^2\\
\le&\f12\int |\na w|^2\e^2|w|^q+2\int|\na\e|^2|w|^q+q\int\e^2|w|^{q-1}|\na w|^2.
\endaligned
\end{equation}
Then
\begin{equation}\aligned\label{ewDwq}
\int \e^2|w|^q|\na w|^2\le4\int|\na\e|^2|w|^q+2q\int\e^2|w|^{q-1}|\na w|^2.
\endaligned
\end{equation}
For any $r\in(0,R]$, if we choose $\e_0=1$ on $\mathscr{B}_{3r}(\bar{p})$, $\e_0=\f {4r-\r_{\bar{p}}}r$ on $\mathscr{B}_{4r}(\bar{p})\setminus\mathscr{B}_{3r}(\bar{p})$,
$\e_0=0$ on $M\setminus\mathscr{B}_{4r}(\bar{p})$.
Then combining \eqref{nabarr} we get $|\na\e_0|\le\sqrt{2}/r$.
Choosing $q=0$ in \eqref{ewDwq}, we have
\begin{equation}\aligned\label{|naw|2}
\int_{\mathscr{B}_{3r}(\bar{p})} |\na w|^2\le4\int|\na\e_0|^2\le\f{8}{r^2}\mathcal{H}^{n}(\mathscr{B}_{4r}(\bar{p})).
\endaligned
\end{equation}
Combining the Neumann-Poincar$\mathrm{\acute{e}}$ inequality \eqref{B1pPITh*} for $w$, we have
\begin{equation}\aligned\label{Br|w|}
&\int_{\mathscr{B}_r(\bar{p})} |w|\le 2\th^*r\int_{\mathscr{B}_{3r}(\bar{p})} |\na w|\\
\le& 2\th^*r\left(\mathcal{H}^n(\mathscr{B}_{3r}(\bar{p}))\right)^{\f12}\left(\int_{\mathscr{B}_{3r}(\bar{p})} |\na w|^2\right)^{\f12}\le 4\sqrt{2}\th^*\mathcal{H}^{n}(\mathscr{B}_{4r}(\bar{p})).
\endaligned
\end{equation}
We choose $\e_1=1$ on $\mathscr{B}_{\f34r}(\bar{p})$, $\e_1=4\f {r-\r_{\bar{p}}}r$ on $\mathscr{B}_{r}(\bar{p})\setminus\mathscr{B}_{\f34r}(\bar{p})$,
$\e_1=0$ on $M\setminus\mathscr{B}_{r}(\bar{p})$.
Then we get $|\na\e_1|\le4\sqrt{2}/r$ from \eqref{nabarr}.
Without loss of generality, we may assume $\th^*\ge1$.
Choosing $q=1$ in \eqref{ewDwq}, combining \eqref{|naw|2}\eqref{Br|w|} we have
\begin{equation}\aligned\label{w|naw|2}
\int_{\mathscr{B}_{\f34r}(\bar{p})}|w||\na w|^2\le\f{2^7}{r^2}\int_{\mathscr{B}_r(\bar{p})}|w|+2\int_{\mathscr{B}_r(\bar{p})}|\na w|^2\le\f{2^{10}}{r^2}\th^*\mathcal{H}^{n}(\mathscr{B}_{4r}(\bar{p})).
\endaligned
\end{equation}

Let $r_j=\f12(1+2^{-j})r$ for each integer $j\ge0$.
Let $\e$ be the cut-off function on $\mathscr{B}_{r_{j}}(\bar{p})$ such that $\e=1$ on $\mathscr{B}_{r_{j+1}}(\bar{p})$, $\e=\f {r_{j}-\r_{\bar{p}}}{r_{j}-r_{j+1}}$ on $\mathscr{B}_{r_{j}}(\bar{p})\setminus\mathscr{B}_{r_{j+1}}(\bar{p})$,
$\e=0$ on $M\setminus\mathscr{B}_{r_j}(\bar{p})$.
Then combining \eqref{nabarr} we get $|\na\e|\le 2^{j+\f52}/r$.
From \eqref{ewDwq}, for any number $q\ge0$ and any integer $j\ge0$ we have
\begin{equation}\aligned\label{Brj-1e2wqnew2}
\int_{\mathscr{B}_{r_{j}}(\bar{p})}\e^2|w|^q|\na w|^2\le  \f{2^{2j+7}}{r^2}\int_{\mathscr{B}_{r_{j}}(\bar{p})}|w|^q+2q\int_{\mathscr{B}_{r_{j}}(\bar{p})}\e^2|w|^{q-1}|\na w|^2.
\endaligned
\end{equation}
From Young's inequality, one has
\begin{equation}\aligned\label{Young}
2q|w|^{q-1}\le\f12|w|^q+2^{2q-1}(q-1)^{q-1}\ \quad \mathrm{for}\ \ q\ge1,\\
|w|^q\le q|w|+(1-q)\ \quad \mathrm{for}\ \ q\in[0,1).
\endaligned
\end{equation}
Here, we let $0^0=1$ for the case $q=1$.
Then for $j\ge0$ and $q\ge1$, substituting \eqref{Young} into \eqref{Brj-1e2wqnew2} infers
\begin{equation}\aligned
\f12\int_{\mathscr{B}_{r_{j+1}}(\bar{p})}|w|^q|\na w|^2\le  &\f{2^{2j+7}}{r^2}\int_{\mathscr{B}_{r_{j}}(\bar{p})}|w|^q+2^{2q-1}(q-1)^{q-1}\int_{\mathscr{B}_{r_{j}}(\bar{p})}|\na w|^2\\
\le&  \f{2^{2j+7}}{r^2}\int_{\mathscr{B}_{r_{j}}(\bar{p})}|w|^q+2^{2q+2}(q-1)^{q-1}r^{-2}\mathcal{H}^{n}(\mathscr{B}_{4r}(\bar{p})),
\endaligned
\end{equation}
where we have used \eqref{|naw|2} in the above inequality. Combining Cauchy inequality, we have
\begin{equation}\aligned\label{Brjwqnaw}
&\int_{\mathscr{B}_{r_{j+1}}(\bar{p})}|w|^{q}|\na w|\le \f{r}{2^{j+5}}\int_{\mathscr{B}_{r_{j+1}}(\bar{p})}|w|^{q}|\na w|^2+\f{2^{j+3}}{r}\int_{\mathscr{B}_{r_{j+1}}(\bar{p})}|w|^{q}\\
\le&\f{2^{j+4}}{r}\int_{\mathscr{B}_{r_{j}}(\bar{p})}|w|^{q}+2^{2q-j-2}(q-1)^{q-1}r^{-1}\mathcal{H}^{n}(\mathscr{B}_{4r}(\bar{p}))
\endaligned
\end{equation}
for $q\ge1$ and $j\ge0$.
Moreover, for $j\ge0$ and $0\le q<1$, combining \eqref{|naw|2}\eqref{w|naw|2}\eqref{Young} one has
\begin{equation}\aligned
&\int_{\mathscr{B}_{r_{j+1}}(\bar{p})}|w|^q|\na w|^2\le q\int_{\mathscr{B}_{r_{1}}(\bar{p})}|w||\na w|^2+(1-q)\int_{\mathscr{B}_{r_{1}}(\bar{p})}|\na w|^2\\
\le&q\f{2^{10}}{r^2}\th^*\mathcal{H}^{n}(\mathscr{B}_{4r}(\bar{p}))+(1-q)\f{8}{r^2}\mathcal{H}^{n}(\mathscr{B}_{4r}(\bar{p}))
\le\f{2^{10}}{r^2}\th^*\mathcal{H}^{n}(\mathscr{B}_{4r}(\bar{p})).
\endaligned
\end{equation}
Then with Cauchy inequality, we have
\begin{equation}\aligned\label{Brjwqnaw*}
&\int_{\mathscr{B}_{r_{j+1}}(\bar{p})}|w|^{q}|\na w|\le \f{r}{2^{j+6}}\int_{\mathscr{B}_{r_{j+1}}(\bar{p})}|w|^{q}|\na w|^2+\f{2^{j+4}}{r}\int_{\mathscr{B}_{r_{j+1}}(\bar{p})}|w|^{q}\\
\le&\f{2^{j+4}}{r}\int_{\mathscr{B}_{r_{j+1}}(\bar{p})}|w|^{q}+2^{4-j}r^{-1}\th^*\mathcal{H}^{n}(\mathscr{B}_{4r}(\bar{p}))
\endaligned
\end{equation}
for $0\le q<1$ and $j\ge0$. Combining \eqref{Brjwqnaw} and \eqref{Brjwqnaw*}, we get
\begin{equation}\aligned\label{Brjwqnaw**}
\int_{\mathscr{B}_{r_{j+1}}(\bar{p})}|w|^{q}|\na w|\le\f{2^{j+4}}{r}\int_{\mathscr{B}_{r_{j}}(\bar{p})}|w|^{q}+2^{2q+4-j}(q+1)^{q}r^{-1}\th^*\mathcal{H}^{n}(\mathscr{B}_{4r}(\bar{p}))
\endaligned
\end{equation}
for $q\ge0$ and $j\ge0$.

Note that from Young's inequality \eqref{Young}, for $q\ge0$ one has
\begin{equation}\aligned\label{Young*}
2^2(q+1)|w|^{q}\le |w|^{q+1}+2^{2q+2}q^q.
\endaligned
\end{equation}
Note $r_{j}\le r$ for all $j\ge0$.
Combining Lemma \ref{SobMG} and \eqref{Brjwqnaw**}\eqref{Young*}, for $j\ge0$ and $q\ge0$, we have
\begin{equation}\aligned
&\left(\mathcal{H}^n(\mathscr{B}_{r_{j+2}}(\bar{p}))\right)^{\f1\a}\left(\int_{\mathscr{B}_{r_{j+2}}(\bar{p})}|w|^{\f {(q+1)\a}{\a-1}}\right)^{\f{\a-1}\a}\\
\le&\th r_{j+2}\left((q+1)\int_{\mathscr{B}_{r_{j+1}}(\bar{p})}|w|^{q}|\na w|+\f{2^{j+3}}{r}\int_{\mathscr{B}_{r_{j+2}}(\bar{p})}|w|^{q+1}\right)\\
\le&\th 2^{j+3}\left(2(q+1)\int_{\mathscr{B}_{r_{j}}(\bar{p})}|w|^{q}+2^{2q+1}(q+1)^{q+1}\th^*\mathcal{H}^{n}(\mathscr{B}_{4r}(\bar{p}))
+\int_{\mathscr{B}_{r_{j+2}}(\bar{p})}|w|^{q+1}\right)\\
\le&\th 2^{j+4}\left(\int_{\mathscr{B}_{r_{j}}(\bar{p})}|w|^{q+1}+2^{2q+2}(q+1)^{q+1}\th^*\mathcal{H}^{n}(\mathscr{B}_{4r}(\bar{p}))\right).
\endaligned
\end{equation}
In other words, there exists a constant $c_*$ depending only on $C_D,C_N$ such that
\begin{equation}\aligned
\Big|\Big||w|^{q}\Big|\Big|_{\f \a{\a-1},r_{j+2}}\le c_*2^j\Big|\Big||w|^{q}+2^{2q}q^q\Big|\Big|_{1,r_{j}}
\endaligned
\end{equation}
for any $j\ge0$, $q\ge1$, which implies
\begin{equation}\aligned\label{waqq-1r}
||w||_{\f{\a q}{\a-1} ,r_{j+2}}\le \left(c_*2^j\right)^{\f1q}\left(||w||_{q,r_{j}}+4q\right).
\endaligned
\end{equation}
Let $q_j=\left(\f\a{\a-1}\right)^j$ and $a_j=||w||_{q_j,r_{2j}}/q_j$ for $j\ge0$. Then from \eqref{waqq-1r} we have
\begin{equation}\aligned
||w||_{q_{j+1} ,r_{2j+2}}\le c_*^{\f1{q_j}}2^{\f {2j}{q_j}}\left(||w||_{q_j,r_{2j}}+4q_j\right),
\endaligned
\end{equation}
and
\begin{equation}\aligned\label{aj+1aj}
a_{j+1}\le \f{\a-1}{\a}c_*^{\f1{q_j}}2^{\f {2j}{q_j}}(a_j+4)
\endaligned
\end{equation}
for every $j\ge0$.
There is an integer $j_*>0$ depending only on $C_D,C_N$ such that
$c_*^{\f1{q_j}}2^{\f{2j}{q_j}}\le \f{\a+1}{\a}$ for all $j\ge j_*$. From \eqref{aj+1aj}, we get
\begin{equation}\aligned
a_{j+1}\le \f{\a^2-1}{\a^2}(a_j+4) \qquad \mathrm{for} \ \ j\ge j_*,
\endaligned
\end{equation}
which implies
\begin{equation}\aligned
a_{j+1}\le \max\{a_j,4(\a^2-1)\} \qquad \mathrm{for} \ \ j\ge j_*,
\endaligned
\end{equation}
Namely, $\max\{a_j,4(\a^2-1)\}$ is monotonic nonincreasing for $j\ge j_*$. Note that $a_0$ is bounded by a constant depending only on $C_D,C_N$ from \eqref{Br|w|}.
Hence there is a constant $c^*$ depending only on $C_D,C_N$ such that
\begin{equation}\aligned
a_j=\f{||w||_{q_j,r_{2j}}}{q_j}\le c^*
\endaligned
\end{equation}
for all $j\ge0$.
For each integer $k\ge 1$, there is an integer $j_k\ge0$ such that $q_{j_k}\le k\le q_{j_k+1}$. With H$\mathrm{\ddot{o}}$lder inequality, we have
\begin{equation}\aligned
||w||_{k,r_{\infty}}\le||w||_{k,r_{2j_k+2}}\le ||w||_{q_{j_k+1},r_{2j_k+2}}\le c^* q_{j_k+1}\le \f{c^*\a}{\a-1}k,
\endaligned
\end{equation}
where $r_\infty=\lim_{i\rightarrow\infty}r_i=\f r2$.
Therefore, by Stirling's formula
\begin{equation}\aligned
||w||_{k,r_{\infty}}^{k}\le \left(\f{c^*\a}{\a-1}\right)^{k} k^{k}\le \left(\f{c^*\a}{\a-1}\right)^{k}e^{k}k^{-\f12}k!,
\endaligned
\end{equation}
which implies
\begin{equation}\aligned
\f{1}{k!}\left|\left|\f{(\a-1)w}{c^*e\a}\right|\right|_{k,r_{\infty}}^{k}\le k^{-\f12}.
\endaligned
\end{equation}
Hence, there are constants $\la_*\in(0,\f{\a-1}{2c^*e\a})$ and $C_*>0$ depending only on $C_D,C_N$ such that
\begin{equation}\aligned
\fint_{\mathscr{B}_{\f r2}(\bar{p})} e^{\la_* w}\fint_{\mathscr{B}_{\f r2}(\bar{p})} e^{-\la_* w}\le C_*.
\endaligned
\end{equation}
Namely,
\begin{equation}\aligned
\fint_{\mathscr{B}_{\f r2}(\bar{p})} u^{\la_*}\fint_{\mathscr{B}_{\f r2}(\bar{p})} u^{-\la_*}\le C_*.
\endaligned
\end{equation}
Combining Theorem \ref{MVinequR}, we have
\begin{equation}\aligned\label{ula14r}
\sup_{\mathscr{B}_{\f14r}(\bar{p})} u^{\la_*}\le \left(C_{\la_*}2^{\f{2\a}{\la_*}}\right)^{\la_*}||u||_{\la_*,\f r2}^{\la_*}\le 2^{2\a}C_*C_{\la_*}^{\la_*}\left(\fint_{\mathscr{B}_{\f r2}(\bar{p})} u^{-\la_*}\right)^{-1}.
\endaligned
\end{equation}
Since
\begin{equation}\aligned
\De u^{-\la_*}=\la_*(\la_*+1)u^{-\la_*-2}|\na u|^2\ge0,
\endaligned
\end{equation}
then combining Theorem \ref{MVinequR} and \eqref{ula14r} we have
\begin{equation}\aligned
\sup_{\mathscr{B}_{\f r4}(\bar{p})}u^{-\la_*}\le C_12^{2\a}\fint_{\mathscr{B}_{\f r2}(\bar{p})} u^{-\la_*}\le 2^{4\a}C_1C_*C_{\la_*}^{\la_*}\left(\sup_{\mathscr{B}_{\f14r}(\bar{p})} u^{\la_*}\right)^{-1},
\endaligned
\end{equation}
which implies
\begin{equation}\aligned
\sup_{\mathscr{B}_{\f r4}(\bar{p})} u\le \left(2^{4\a}C_1C_*\right)^{1/\la_*}C_{\la_*}\inf_{\mathscr{B}_{\f r4}(\bar{p})}u.
\endaligned
\end{equation}
Hence for any $z\in \mathscr{B}_{2R}(\bar{p})$ we have
\begin{equation}\aligned
\sup_{\mathscr{B}_{\f R8}(z)} u\le \left(2^{4\a}C_1C_*\right)^{1/\la_*}C_{\la_*}\inf_{\mathscr{B}_{\f R8}(z)}u.
\endaligned
\end{equation}
This completes the proof.
\end{proof}

Using Theorem \ref{Harnack}, we can show the following Liouville type theorem for the minimal graphic function $u$ by considering $u-\inf_\Si u$.
\begin{theorem}
Let $\Si$ be an $n$-dimensional complete Riemannian manifold with \eqref{VD} and \eqref{NP}.
If $u$ is a positive minimal graphic function on $\Si$,
then $u$ is a constant.
\end{theorem}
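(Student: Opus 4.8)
The plan is to deduce the statement from the scale-invariant Harnack inequality of Theorem~\ref{Harnack}, using the invariance of \eqref{u0} under vertical translations. If $c$ is any constant, then $u+c$ again solves \eqref{u0} (the equation depends only on $Du$), and its graph is the graph $M$ of $u$ translated in the $\R$-factor, hence isometric to $M$. Set $m=\inf_\Si u\ge0$; we may assume $u$ is not constant, for otherwise there is nothing to prove. For each $\ep>0$ put $u_\ep=u-m+\ep$. Then $u_\ep$ is a positive, non-constant minimal graphic function on all of $\Si$ with $\inf_\Si u_\ep=\ep$, and its graph $M_\ep$ is isometric to $M$. The strategy is to prove $\sup_\Si u_\ep\le\vartheta\ep$, where $\vartheta$ is the Harnack constant of Theorem~\ref{Harnack}; since $\sup_\Si u_\ep=\sup_\Si u-m+\ep$, letting $\ep\to0^+$ then forces $\sup_\Si u\le m=\inf_\Si u$, so $u$ is constant, contradicting our assumption and finishing the proof.

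To get the bound on $u_\ep$, fix $p\in\Si$ and write $\bar p=(p,u_\ep(p))\in M_\ep$. Since $\Si$ is complete, the octahedral domains $\mathfrak{D}_{\bar p,2R}=\{(y,s)\in\Si\times\R:\ d(p,y)+|s-u_\ep(p)|<2R\}$ exhaust $\Si\times\R$ as $R\to\infty$, so $\mathscr{B}_{2R}(\bar p)=M_\ep\cap\mathfrak{D}_{\bar p,2R}$ exhaust $M_\ep$; in particular $\inf_{\mathscr{B}_{2R}(\bar p)}u_\ep\searrow\inf_{M_\ep}u_\ep=\ep$ as $R\to\infty$. For every $R>0$ the function $u_\ep$ is positive and non-constant on $B_{4R}(p)\subset\Si$, so Theorem~\ref{Harnack} applies on $\mathscr{B}_{2R}(\bar p)$ and gives
\[
\sup_{\mathscr{B}_{2R}(\bar p)}u_\ep\le\vartheta\,\inf_{\mathscr{B}_{2R}(\bar p)}u_\ep,
\]
with $\vartheta$ depending only on $C_D,C_N$ and, crucially, not on $R$. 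Fixing $R_0>0$, monotonicity in $R$ gives $\sup_{\mathscr{B}_{2R_0}(\bar p)}u_\ep\le\sup_{\mathscr{B}_{2R}(\bar p)}u_\ep\le\vartheta\inf_{\mathscr{B}_{2R}(\bar p)}u_\ep$ for all $R\ge R_0$; letting $R\to\infty$ on the right yields $\sup_{\mathscr{B}_{2R_0}(\bar p)}u_\ep\le\vartheta\ep$. As $R_0$ is arbitrary and the $\mathscr{B}_{2R_0}(\bar p)$ exhaust $M_\ep$, we obtain $\sup_\Si u_\ep=\sup_{M_\ep}u_\ep\le\vartheta\ep$, as claimed.

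The only genuinely substantial input is Theorem~\ref{Harnack} itself; its scale-invariance is precisely what makes the exhaustion argument work, and everything after it is elementary, so I do not anticipate a real obstacle here. An alternative to the $\ep$-regularisation is to argue directly with $v=u-\inf_\Si u\ge0$, which is harmonic on the connected minimal graph $M$: if $\inf_\Si u$ is attained then the strong maximum principle gives $v\equiv0$ at once, while if it is not attained then $v>0$ everywhere and one runs the same Harnack-exhaustion argument with $v$ in place of $u_\ep$, using $\inf_\Si v=0$. I expect the $\ep$-version to be the cleanest, since it relies only on results already established above.
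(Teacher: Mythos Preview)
Your proposal is correct and follows essentially the same route as the paper: the paper simply states that the theorem follows from Theorem~\ref{Harnack} ``by considering $u-\inf_\Si u$'', and your $\ep$-regularisation $u_\ep=u-\inf_\Si u+\ep$ together with the scale-invariant Harnack exhaustion argument is precisely a clean way to carry this out. The alternative you sketch with $v=u-\inf_\Si u$ is in fact the version the paper has in mind; either way, the argument is elementary once Theorem~\ref{Harnack} is in hand.
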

As a corollary, if $\Si$ is an open manifold which is quasi isometric to an open manifold with nonnegative Ricci curvature,
then any positive minimal graphic function on $\Si$ is a constant.

\begin{remark} The Harnack's inequality \eqref{Harnack} implies H$\ddot{o}$lder continuity of the solution $u$ (see \cite{M} for instance).
Namely, there is a constant $\de>0$ depending only on $C_D,C_N$ such that for any entire minimal graphic function $u$ on $\Si$ satisfies
$$
\sup_{\mathscr{B}_{r}(\bar{p})}u-\inf_{\mathscr{B}_{R}(\bar{p})}u\le \f1\de\left(\f rR\right)^\de\left(\sup_{\mathscr{B}_{R}(\bar{p})}u-\inf_{\mathscr{B}_{R}(\bar{p})}u\right)
$$
for all $0<r<R<\infty$.
Therefore, the above Theorem can be improved somewhat to allow
$$\limsup_{R\rightarrow\infty}R^{-\de}\max\left\{-\inf_{B_R(p)}u,0\right\}=0.$$
\end{remark}

\section{Appendix}

Let $\Si$ be an $n$-dimensional complete manifold with Riemannian metric $\si$ and the Levi-Civita connection $D$.
Suppose that $\Si$ satisfies the volume doubling property \eqref{VD} and the uniform Neumann-Poincar$\mathrm{\acute{e}}$ inequality \eqref{NP}. Now let us show an isoperimetric inequality on $\Si$.

Let $\Om$ be an open set in $B_r(p)$ with rectifiable boundary.
For any $x\in\Om\setminus\p\Om$, there is a constant $r_x>0$ such that
\begin{equation}\aligned\label{Omrx12}
\mathcal{H}^n(\Om\cap B_{r_x}(x))=\f12\mathcal{H}^n(B_{r_x}(x)).
\endaligned
\end{equation}
From \eqref{VG}, we have
\begin{equation}\aligned
\mathcal{H}^n(\Om)\ge\f12\mathcal{H}^n(B_{r_x}(x))\ge\f1{2C_D}\mathcal{H}^n(B_{2r}(x))\left(\f{r_x}{2r}\right)^\a\ge\f1{2C_D}\mathcal{H}^n(B_{r}(p))\left(\f{r_x}{2r}\right)^\a,
\endaligned
\end{equation}
which implies
\begin{equation}\aligned\label{rxbound}
r_x\le 2r\left(\f{2C_D\mathcal{H}^n(\Om)}{\mathcal{H}^n(B_{r}(p))}\right)^{\f1\a}.
\endaligned
\end{equation}
By 5-lemma, there is a sequence of points $x_i\in\Om\setminus\p\Om$ such that $\Om\subset\bigcup_i B_{5r_{x_i}}(x_i)$ and $B_{r_{x_i}}(x_i)$ are mutually disjoint.
Then combining \eqref{VG}\eqref{NPOm}\eqref{Omrx12}\eqref{rxbound}, we get
\begin{equation}\aligned
&\mathcal{H}^n(\Om)\le\sum_i\mathcal{H}^n(B_{5r_{x_i}}(x_i))\le 5^\a C_D\sum_i\mathcal{H}^n(B_{r_{x_i}}(x_i))\\
=&2\times5^\a C_D\sum_i\mathcal{H}^n(\Om\cap B_{r_{x_i}}(x_i))
\le2\times 5^\a C_DC_N\sum_ir_{x_i}\mathcal{H}^{n-1}(\p\Om\cap B_{r_{x_i}}(x_i))\\
\le&4\times5^\a C_DC_Nr\left(\f{2C_D\mathcal{H}^n(\Om)}{\mathcal{H}^n(B_{r}(p))}\right)^{\f1\a}\sum_i\mathcal{H}^{n-1}(\p\Om\cap B_{r_{x_i}}(x_i))\\
\le&4\times5^\a C_DC_Nr\left(\f{2C_D\mathcal{H}^n(\Om)}{\mathcal{H}^n(B_{r}(p))}\right)^{\f1\a}\mathcal{H}^{n-1}(\p\Om).
\endaligned
\end{equation}
Note $\a\ge2$ from $\eqref{alog2CD}$. Then the above inequality implies an isoperimetric inequality:
\begin{equation}\aligned
\left(\mathcal{H}^n(\Om)\right)^{1-\f1\a}\le 8\times5^\a C_D^2C_N\left(\mathcal{H}^n(B_{r}(p))\right)^{-\f1\a}r\mathcal{H}^{n-1}(\p\Om).
\endaligned
\end{equation}

\bibliographystyle{amsplain}

\end{document}